\newtheorem{thm}{Theorem}[section]
\newtheorem{cor}[thm]{Corollary}
\newtheorem{lem}[thm]{Lemma}
\newtheorem{prop}[thm]{Proposition}
\theoremstyle{definition}
\theoremstyle{remark}
\newtheorem{rem}[thm]{Remark}
\numberwithin{equation}{section}
\def\be#1 {\begin{equation} \label{#1}}
\newcommand{\ee}{\end{equation}}
\newcommand{\mb}{\medskip\noindent}
\newcommand{\R}{\mathbb R}
\def \vsp {\vspace{6pt}}
\begin{document}

\author{Fr\'ed\'eric Bernicot}
\address{Fr\'ed\'eric Bernicot - CNRS - Universit\'e Lille 1 \\ Laboratoire de math\'ematiques Paul Painlev\'e \\ 59655 Villeneuve d'Ascq Cedex, France}
\curraddr{}
\email{frederic.bernicot@math.univ-lille1.fr}

\author{Pierre Germain}
\address{Pierre Germain - Courant Institute of Mathematical Sciences \\ New York University \\ 251 Mercer Street \\ New York, N.Y. 10012-1185 \\ USA}
\email{pgermain@cims.nyu.edu}

\title[Bilinear dispersive estimates]{Bilinear dispersive estimates via space-time resonances. Part I : the one dimensional case.}

\subjclass[2000]{Primary 42B20 ; 37L50}

\keywords{Bilinear dispersive estimates ; space-time resonances ; Strichartz inequalities}

\date{\today}

\begin{abstract}  We prove new bilinear dispersive estimates. They are obtained and described via a bilinear time-frequency analysis following the space-time 
resonances method, introduced by Masmoudi, Shatah, and the second  author.
They allow us to understand the large time behavior of solutions of quadratic dispersive equations.
\end{abstract}

\maketitle

\begin{quote}
\footnotesize\tableofcontents
\end{quote}

\section{Introduction}

\subsection{Linear dispersive and Strichartz estimates} A linear, hyperbolic equation is called dispersive if the group velocity of a wave packet depends on its frequency. 
In order to remain concise, we only discuss in this paragraph the Schr\"odinger equation\
$$ \left\{ \begin{array}{l}
 \partial_t u -i\Delta u =0 \\
 u_{|t=0} = f,
\end{array} \right.$$
whose solution we denote $u(t) = e^{it\Delta} f$. It is the prototype of a dispersive equation. A first way to quantify dispersion is provided by 
the so-called ``dispersive estimates'' which read, in the case of the linear Schr\"odinger equation
$$ \|e^{it\Delta} f\|_{L^p(\R^d)} \lesssim t^{\frac{d}{p}-\frac{d}{2}} \|f\|_{L^{p'}(\R^d)}\qquad \mbox{if $2 \leq p \leq \infty$}.$$
Another way of quantifying dispersion is provided by Strichartz estimates, which first appeared in the work of Strichartz \cite{Strichartz} (then extended by Ginibre and Velo 
in \cite{GV} and the end-points are due to Keel and Tao in \cite{KT}). They read
$$ \|e^{it\Delta} f\|_{L^p L^q(\R^+ \times \R^d)} \lesssim \|f\|_{L^2(\R^d)}$$
for every admissible exponents $(p,q)$ which means: $2\leq p,q \leq \infty$, $(p,q,d)\neq (2,\infty,2)$ and
$$\frac{2}{p}+\frac{d}{q}=\frac{d}{2}.$$

Let us just point out the situation if the Euclidean space $\R^d$ is replaced by a compact Riemannian manifold. In that case, any constant function is solution of the 
free Schr\"odinger equation and therefore the dispersive estimate fails for large $t$. Even more, it also fails locally in time. Then Schrichartz estimates may only hold with 
a finite time scale and a loss of derivatives (the data $f$ is controled in a positive order Sobolev space), which was obtained by Bourgain for the torus in 
\cite{Bourgain,Bourgain2} and then extended to general manifolds by Burq, G\'erard and Tzvetkov in \cite{BGT}.

\subsection{Bilinear Strichartz estimates}

Recently bilinear (and more generally multilinear) analogs of such inequalities have appeared. They correspond to controlling the size of the (pointwise) product of two linear solutions, for instance:
\begin{equation}
\label{parrot1}
\left\| vw \right\|_{L^p L^q(\R^+ \times \R^d)} \lesssim \|f\|_{L^2(\R^d)}\|g\|_{L^2(\R^d)} \;\;\;\;\mbox{with}\;\;\;\;\left\{ \begin{array}{l} i \partial_t v + \Delta v = 0, \;\;\;v(t=0)=f  \vsp \\ i \partial_t w + \Delta w = 0, \;\;\;w(t=0) = g \vsp \end{array} \right.
\end{equation}
or the solution to the inhomogeneous linear problem, the right hand side being given by the product of two linear solutions:
\begin{equation}
\label{parrot2}
\left\| u \right\|_{L^p L^q(\R^+ \times \R^d)}  \lesssim \|f\|_{L^2(\R^d)}\|g\|_{L^2(\R^d)} \;\;\;\;\mbox{with}\;\;\;\;\left\{ \begin{array}{l} i \partial_t v + \Delta v = 0, \;\;\;v(t=0)=f \vsp \\ i \partial_t w + \Delta w = 0, \;\;\;w(t=0) = g \vsp \\ i \partial_t u + \Delta u = vw, \;\;\;u(t=0) = 0. \vsp \end{array} \right.
\end{equation}
In a first line of research, $p=q=2$, which is related to the use of $X^{s,b}$ spaces in order to solve nonlinear dipsersive equations. We would like to cite in this respect 
Bourgain \cite{Bourgain} and Tao~\cite{Tao}. If the Euclidean space is replaced by a manifold, we refer to Burq, G\'erard and Tzvetkov \cite{BGT2} and Hani~\cite{Hani}.
The case of the wave equation is treated in Klainerman, Machedon, Bourgain and Tataru \cite{KMBT}, and Foschi and Klainerman~\cite{KF}. In all these works, $f$ and $g$
are chosen with vastly different frequency supports, and the focus is on understanding the effect on the implicit constant.

Another line of research considers the case where $p$ and $q$ are different from $2$: see Wolff~\cite{Wolff} for the case of the wave equation and Tao~\cite{Tao2} for Schr\"odinger
equation. The problem becomes then related to deep harmonic analysis questions (the restriction conjecture), and the optimal estimates are not known in high dimension.

In the present note, our goal is different from the two directions which have been mentioned: we aim at finding a decay rate in time (rather than integrability properties),
and at understanding the effect of localized data.

\subsection{The set up}

From now on, the dimension $d$ of the ambiant space is set equal to 1. Let $a,b,c$ be smooth real-valued functions on $\R$, and fix a 
smooth, compactly supported bilinear symbol $m$ on the frequency plane $\R^2$. 
We denote $T_m$ the associated pseudo-product operator (a precise definition of $T_m$ is given in Section~\ref{notations}; $T_m$ can be thought of as a 
generalized product operator, and our setting of course includes classical products between functions which are compactly supported in Fourier space).
Consider then the equation
\begin{equation}
\label{eq:dispersive}
\left\{ \begin{array}{l} i\partial_t u + a(D) u = T_{m}\left( v , w \right) \vsp \\
i\partial_t v + b(D) v = 0 \vsp \\
i\partial_t w + c(D) w = 0 \vsp 
\end{array} \right.
\quad \mbox{with} \quad
\left\{ \begin{array}{l} u(t=0) = 0 \vsp \\ v(t=0) = f \vsp \\ w (t=0) = g. \vsp \end{array} \right.
\end{equation}
The unknown functions are complex valued, and this system is set in the whole space: $f$ and $g$ map $\mathbb{R}$ to $\mathbb{C}$, whereas $u$, $v$, and $w$ map $\mathbb{R}^2$
to $\mathbb{C}$. The above system is meant to understand the nonlinear interaction of free waves, which is of course the first step towards 
understanding a nonlinear problem.

\bigskip
We will most of the time, but not always, assume
\begin{equation}
(H) \quad \quad \mbox{The second derivatives $a''$, $b''$, $c''$ are bounded away from zero}
\end{equation}
Under these hypotheses, it is well known that the groups $e^{ita(D)}$, $e^{itb(D)}$, $e^{itc(D)}$ satisfy the following estimates (we denote $S(t)$ for any of these groups)
\begin{itemize}
\item $\displaystyle \left\| S(t) f \right\|_{L^{p'}} \lesssim |t|^{\frac{1}{2}-\frac{1}{p}} \left\| f \right\|_{L^p}$ for $p\in [1,2]$ (dispersive estimates).
\item $\displaystyle \left\| S(t) f \right\|_{L^p_t L^q} \lesssim \left\| f \right\|_{L^2}$ if $\frac{2}{p}+\frac{1}{q}=\frac{1}{2}$, and $2 \leq p,q \leq \infty$
(Strichartz estimates).
\end{itemize}

\bigskip

The question we want to answer is: {\bf given $f$ and $g$ in $L^2$ (or weighted $L^2$ spaces), how does $u$ grow, or decay, in $L^p$ spaces, $2\leq p \leq \infty$?}

The answer of course depends on $a$, $b$, $c$, and the crucial notion is that of space-time resonance.

\subsection{Space-time resonances}

Using Duhamel's formula, $u(t,\cdot)$ is given by the following bilinear operator $u(t,\cdot)=T_t(f,g)$ with
$$
T_t(f,g)(x) = \int_0^t \int \int e^{ix(\xi+\eta)} e^{ita(\xi)} e^{is\phi(\xi,\eta)} m(\xi,\eta) \widehat{f}(\eta) \widehat{g}(\xi-\eta)\,d\xi\,d\eta\,ds
$$
or, to put it in a more concise form,
$$ T_t(f,g)\overset{def}{=} - i e^{ita(D)} \int_0^t T_{m e^{is\phi}} (f,g) ds, $$
where 
$$\phi(\xi,\eta)\overset{def}{=} -a(\xi+\eta)+b(\xi)+c(\eta).$$
The goal of this article is thus to understand the behavior for large time $t>>1$ and some exponent $q\in[2,\infty]$ of
$$ \left\| T_t(f,g) \right\|_{L^q}, \qquad f,g\in L^2.$$
We sometimes find it convenient to write $u(t)$ as
$$
u(t) = \mathcal{F}^{-1} \int_0^t\int_\R e^{ita(\xi)} e^{is\Phi(\xi,\eta)} \mu(\xi,\eta) \widehat{f}(\xi-\eta) \widehat{g}(\eta) \,d\eta\,ds
$$
where
$$
\Phi(\xi,\eta) \overset{def}{=} -a(\xi) + b(\xi-\eta) + c(\eta) =\phi(\xi-\eta,\eta)\quad \mbox{and} \quad \mu(\xi,\eta) \overset{def}{=} m(\xi-\eta,\eta).
$$

Viewing this double integral as a stationary phase problem, it becomes clear that the sets where the phase is stationary in $s$, respectively $\eta$,
$$
\Gamma \overset{def}{=} \{(\xi,\eta) \:\mbox{such that}\;\Phi(\xi,\eta) = 0\} \quad \mbox{and} \quad \Delta \overset{def}{=} \{(\xi,\eta) \:\mbox{such that}\; 
\partial_\eta \Phi(\xi,\eta) = 0\}
$$
will play a crucial role. Even more important is their intersection $\Gamma \cap \Delta$.

The sets $\Gamma$ and $\Delta$ are, respectively, the sets of time and space resonances; their intersection is the set of space-time resonant sets.
A general presentation, stressing their relevance to PDE problems, can be found 
in~\cite{Pierre}; instances of applications are~\cite{GMS1}~\cite{GMS2}~\cite{GMS3}~\cite{G}~\cite{GM}. 

In order to answer the question asked in the previous subsection, one has to distinguish between various possible geometries of $\Gamma$ and $\Delta$ (which can
be reduced to a discrete set, or curves, with vanishing curvature or not, etc...); possible orders of vanishing of $\Phi$, respectively $\partial_\eta \Phi$, on $\Gamma$, 
respectively $\Delta$; and type of intersection of $\Gamma$
and $\Delta$ (at a point or on a dimension 1 set, transverse or not, etc...). Considering all the possible configurations would be a daunting task, we will therefore focus
on a few relevant and ``generic'' examples.

\begin{itemize}
\item We shall study the influence of time resonances alone, without caring about space resonances: in other words, we shall study various configurations for $\Gamma$, without making 
any assumptions on $\Delta$. This amounts essentially to considering the worst possible case as far as $\Delta$ is concerned.
\item Similarly, we shall study the influence of space resonances alone, without caring about time resonances.
\item When putting space and time resonances together, we will assume a ``generic'' configuration: $\Gamma$ and $\Delta$ are smooth curves, and they intersect transversally
at a point. Aside of being generic, this configuration has a key importance for many nonlinear PDE; this will be explained in the next subsection.
\end{itemize}

\subsection{Space-time resonant set reduced to a point} 
As was just mentioned, the case where $\Gamma$ and $\Delta$ are curves which intersect transversally at a point will be examined carefully in this article. It is of course 
the generic situation, but it also occurs in a number of important models from physics; we would like to give a few examples here. We of course restrict the discussion to the case 
of one-dimensional models.

For simple equations of the form $i \partial_t u + \tau(D) u = Q(u,\bar u)$, where $u$ is scalar-valued, $Q$ quadratic (ie, we retain only the quadratic part of the nonlinearity), 
and $\tau(\xi) = |\xi|^\alpha$ is homogeneous, the space-time resonant set of the various possible interactions between $u$ and $\bar u$ is never reduced to a point. This is the 
case for standard equations such as NLS, KdV, wave equations... 

However, if $\tau$ is not supposed to be homogeneous any more, the space-time resonant set might be reduced to a point. This
is in particular the case for the water wave equation (ideal fluid with a free surface) in the following setting: close to the equilibrium given by a flat surface and zero 
velocity, including the effects of gravity $g$ and capillarity $c$, with a constant depth $d$ (perhaps infinite). The dispersion relation for the linearized problem is then given 
by
$$
\tau(\xi) = \tanh(d|\xi|)\sqrt{g|\xi|+c|\xi|^3}.
$$

For more complex models, $u$ is vector-valued, and the system accounts for the interaction of waves with different dispersion relations. It is then often the case that the space-time
resonance set is reduced to a point. We mention in particular
\begin{itemize}
\item The Euler-Maxwell system, describing the interaction of a charged fluid with an electromagnetic field (see~\cite{GM} for a mathematical treatment of this equation dealing 
with space-time resonances). Many other models of plasma physics could also be mentioned here.
\item Systems where wave and (generalized) Schr\"odinger equations are coupled: for instance the Davey-Stewartson, Ishimori, Maxwell-Schr\"odinger, Zakharov systems.
\end{itemize}

\subsection{Organization of the article}

The present article is organized as follows
\begin{itemize}
\item Asymptotic equivalents for $u$ are derived in \textit{Section~\ref{ae}}, for $f$ and $g$ smooth and localized. 
Three cases are considered: $\Gamma=\emptyset$, $\Delta=\emptyset$, and $\Gamma$ and $\Delta$ are curves intersecting
transversally at a point.
\item In \textit{Section~\ref{nld}}, we establish estimates on $u$ if $f$ and $g$ belong to $L^2$; we shall then only rely on time resonances.
\item In \textit{Section~\ref{ld}}, we establish estimates on $u$ if $f$ and $g$ belong to weighted $L^2$ spaces. We consider in particular the case when the space-time resonant set
is reduced to a point.
\item We detail in \textit{Appendix~\ref{Ame}} some results on boundedness of multilinear operators.
\item Finally, one-dimensional oscillatory integrals are studied in \textit{Appendix~\ref{Aodoi}}.
\end{itemize}

\subsection{Notations}
\label{notations}
We adopt the following notations
\begin{itemize}
\item $A \lesssim B$ if $A \leq C B$ for some implicit constant $C$. The value of $C$ may change from line to line.
\item $A \sim B$ means that both $A \lesssim B$ and $B \lesssim A$.
\item If $f$ is a function over $\mathbb{R}^d$ then its Fourier transform, denoted $\widehat{f}$, or $\mathcal{F}(f)$, is given by
$$
\widehat{f}(\xi) = \mathcal{F}f (\xi) = \frac{1}{(2\pi)^{d/2}} \int e^{-ix\xi} f(x) \,dx \;\;\;\;\mbox{thus} \;\;\;\;f(x) = \frac{1}{(2\pi)^{d/2}} \int e^{ix\xi} \widehat{f}(\xi) \,d\xi.
$$
(in the text, we systematically drop the constants such as $\frac{1}{(2 \pi)^{d/2}}$ since they are not relevant).
\item The Fourier multiplier with symbol $m(\xi)$ is defined by
$$
m(D)f = \mathcal{F}^{-1} \left[m \mathcal{F} f \right].
$$
\item The bilinear Fourier multiplier with symbol $m$ is given by
$$ T_m(f,g)(x) \overset{def}{=} \int_{\R^2} e^{ix(\xi+\eta)} \widehat{f}(\xi) \widehat{g}(\eta) m(\xi,\eta)\, d\xi d\eta 
= \mathcal{F}^{-1} \int m(\xi-\eta,\eta) \widehat{f}(\xi-\eta) \widehat{g}(\eta)\,d\eta.$$
\item The japanese bracket $\langle \cdot \rangle$ stands for $\langle x \rangle = \sqrt{1 + x^2}$.
\item The weighted Fourier space $L^{p,s}$ is given by the norm $\|f\|_{L^{p,s}} = \|\langle x \rangle^s f \|_{L^p}$.
\item If $E$ is a set in $\mathbb{R}^d$, then $E_\epsilon$ is the set of points of $\mathbb{R}^d$ which are within $\epsilon$ of a point of $E$.
\end{itemize}

\section{Asymptotic equivalents}

\label{ae}

\subsection{Preliminary discussion}

Our aim in this section is to obtain asymptotic equivalents, as $t \rightarrow \infty$, for the solution $u$ of~(\ref{eq:dispersive}), under the simplifying assumption that $f$ and 
$g$ are very smooth and localized. Hypotheses on $a$, $b$, $c$ are needed, and the variety of possible situations is huge; we try to focus on the most representative, or generic 
situations. First, we will assume in this whole section that $(H)$ holds: this gives decay for the linear waves. For bilinear estimates, everything hinges on  the vanishing 
properties of $\Phi$ and $\partial_\eta \Phi$, where
$$
\Phi(\xi,\eta) = -a(\xi) + b(\xi-\eta) + c(\eta).
$$
We will distinguish three situations, which correspond to the next three subsections: $\Phi$ does not vanish; $\Phi_\eta$ does not vanish; 
$\{ \Phi = 0 \}$ and $\{ \Phi_\eta = 0 \}$ are curves intersecting transversally. Additional assumptions will be detailed in the relevant subsections.

\subsubsection{Asymptotics for the linear Cauchy problem}

They are obtained easily by stationary phase (see for instance \cite{Stein}).

\begin{lem} 
\label{lemlin}
Assume that $F \in \mathcal{S}$ is such that $\widehat{F}$ is compactly supported; and suppose that $a''$ does not vanish on $\operatorname{Supp} F$. Then
$$
e^{ita(D)} F (x) = e^{it\left[ a(\xi_0) + X \xi_0 \right]} e^{i\frac{\pi}{4}\sigma} \frac{1}{\sqrt{|a''(\xi_0)|} } \frac{1}{\sqrt{t}} \widehat{F}(\xi_0) + O \left( \frac{1}{t} \right),
$$
where
$$
X \overset{def}{=} \frac{x}{t} \quad , \quad a'(\xi_0)+X \overset{def}{=}0 \quad \mbox{and} \quad \sigma \overset{def}{=} \operatorname{sign} a''(\xi_0).
$$
\end{lem}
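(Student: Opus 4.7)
The plan is to reduce the claim to the classical one-dimensional stationary phase formula. Writing the Fourier inversion formula and using that $e^{ita(D)}$ is the Fourier multiplier with symbol $e^{ita(\xi)}$, I would represent
\[
e^{ita(D)} F(x) \;=\; \frac{1}{\sqrt{2\pi}} \int_{\R} e^{i t\, \psi(\xi)}\, \widehat{F}(\xi)\, d\xi,
\qquad \psi(\xi) \overset{def}{=} a(\xi) + X\xi,
\]
with $X = x/t$. The critical points of $\psi$ are solutions of $a'(\xi)+X = 0$; under the hypothesis $a'' \ne 0$ on $\operatorname{Supp}\widehat F$, there is at most one such point $\xi_0$ in the support, and when it exists it is non-degenerate with $\psi''(\xi_0) = a''(\xi_0)$.

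Next, I would introduce a smooth cutoff $\chi$ supported in a small neighborhood of $\xi_0$ (if $\xi_0$ belongs to $\operatorname{Supp}\widehat F$; otherwise just set $\chi \equiv 0$) and split the integral into $\chi \widehat F + (1-\chi) \widehat F$. On the second piece $\psi'$ is bounded away from zero, so integration by parts against the operator $\frac{1}{it\psi'(\xi)}\frac{d}{d\xi}$, iterated $N$ times, gives a contribution of size $O(t^{-N})$, which is absorbed into $O(1/t)$. On the first piece, I would apply the standard stationary phase lemma (for instance as stated in Stein): if $\psi$ has a unique non-degenerate critical point $\xi_0$ on the support of a smooth amplitude $A$, then
\[
\int_{\R} e^{it\psi(\xi)} A(\xi)\, d\xi
\;=\; \sqrt{\frac{2\pi}{t\,|\psi''(\xi_0)|}}\, e^{i t \psi(\xi_0)}\, e^{i\frac{\pi}{4}\sigma}\, A(\xi_0) \;+\; O(t^{-1}),
\]
where $\sigma = \operatorname{sign}\psi''(\xi_0)$. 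Applied with $A = \chi \widehat F$ (so $A(\xi_0)= \widehat F(\xi_0)$) and combined with the $(2\pi)^{-1/2}$ prefactor, this yields exactly the stated expression, with $\psi(\xi_0) = a(\xi_0) + X\xi_0$ and $\psi''(\xi_0) = a''(\xi_0)$.

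I expect no substantial obstacle: the proof is a textbook application of non-stationary and stationary phase, and all error terms are controlled uniformly because $\widehat F$ is smooth with compact support and $a''$ is bounded away from zero there. The only mild subtleties are (i) to handle cleanly the case where no solution of $a'(\xi)+X=0$ lies in $\operatorname{Supp}\widehat F$, which is disposed of by the non-stationary-phase argument alone; and (ii) to track the constants, so that the Gaussian integral arising from the quadratic Taylor expansion of $\psi$ at $\xi_0$ produces the Fresnel factor $\sqrt{2\pi/(t|a''(\xi_0)|)}\, e^{i\pi\sigma/4}$ which combines with the Fourier normalization to give the stated prefactor.
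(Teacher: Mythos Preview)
Your proposal is correct and is exactly the approach indicated in the paper, which simply states that the lemma follows ``easily by stationary phase (see for instance \cite{Stein})'' without giving further details. Your write-up is in fact more explicit than the paper's own treatment: the representation as an oscillatory integral with phase $\psi(\xi)=a(\xi)+X\xi$, the cutoff near the critical point, the non-stationary-phase bound on the complement, and the application of the one-variable stationary phase lemma are precisely what is meant.
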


\subsubsection{The point of view of stationary phase}
\label{sp}

The solution of~(\ref{eq:dispersive}) is given by
$$
u(t,x) = -\frac{i}{\sqrt{2\pi}} \int_0^t \int \int e^{ix\xi} e^{i\left[ (t-s)a(\xi)+s b (\xi-\eta)+s c(\eta) \right]} \mu(\xi,\eta) \widehat{f}(\xi-\eta) \widehat{g}(\eta) 
\,d\eta \,d\xi\,ds.
$$
Recalling that $X \overset{def}{=} \frac{x}{t}$ and $\mu(\xi,\eta)\overset{def}{=} m(\xi-\eta,\eta)$, the above is equal to
$$
u(t,x) = -\frac{i}{\sqrt{2\pi}} t \int_0^1 \int \int e^{it \left[ (1-\sigma)a(\xi)+\sigma b(\xi-\eta)+\sigma c(\eta) + X \xi\right]} \mu(\xi,\eta) \widehat{f}(\xi-\eta) 
\widehat{g}(\eta) \,d\eta \,d\xi\,d\sigma.
$$
The above is now a (non-standard) stationary phase problem, with a phase given by
$$
\psi(\xi,\eta,\sigma) \overset{def}{=} (1-\sigma)a(\xi)+\sigma b(\xi-\eta)+\sigma c(\eta) + X \xi = a(\xi) + \sigma \Phi(\xi,\eta) +X \xi.
$$
The phase of the gradient is given by
$$
\nabla_{\xi,\eta,\sigma}  \psi = \left( \begin{array}{l}  a' + \sigma \Phi_\xi + X  \\ \sigma \Phi_\eta \\ \Phi \end{array} \right).
$$
It vanishes if
\begin{equation}
\label{pinson}
\mbox{either} \quad \left\{ \begin{array}{l} \sigma = 0 \\ \Phi = 0 \\ a'+X = 0 \end{array} \right. \quad \mbox{or} \quad  \left\{ \begin{array}{l} \Phi = 0 \\ \Phi_\eta = 0 \\ a'+\sigma \Phi_\xi + X = 0. \end{array} \right.
\end{equation}
The Hessian of $\psi$ is given by
$$
\operatorname{Hess}_{\xi,\eta,\sigma} \psi = \left( \begin{array}{lll}  a'' + \sigma \Phi_{\xi \xi} & \sigma \Phi_{\xi \eta} &  \Phi_\xi \\   \sigma \Phi_{\xi \eta} & \sigma \Phi_{\eta \eta} & \Phi_\eta \\  \Phi_\xi & \Phi_\eta & 0 \end{array} \right).
$$
On stationary points of the first type in~(\ref{pinson}), the Hessian is degenerate if and only if $(\xi,\eta)$ belongs to the space-time resonant set.
On stationary points of the second type in~(\ref{pinson}), the Hessian is generically non-degenerate. 

The main difficulty in the analysis below will be to handle the stationary points on the boundary of the integration domain, namely those for which $\sigma=0$ or $1$; 
this will be even more complicated when they are degenerate.

\subsection{In the absence of time resonances}

\begin{thm}
\label{albatros1}
Assume that $\Phi(\xi,\eta)$ does not vanish on $\operatorname{Supp} m$ (ie $\Gamma=\emptyset$), and that $f$ and $g$ belong to $\mathcal{S}$. Then, as $t\rightarrow \infty$,
$$
u(t) = e^{ita(D)} F + O \left( \frac{1}{t} \right).
$$
with
$$
F = T_{\frac{m}{\phi}}(f,g).
$$
\end{thm}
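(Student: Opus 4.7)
The plan is to exploit the non-vanishing of $\phi$ on $\operatorname{Supp} m$ in order to integrate out the time variable $s$ in the Duhamel representation
$$u(t) = -i e^{ita(D)} \int_0^t T_{m e^{is\phi}}(f,g)\, ds.$$
Since $m$ is compactly supported and $\phi$ is continuous and non-vanishing on $\operatorname{Supp} m$, one has $|\phi|\geq c>0$ on an open neighbourhood of $\operatorname{Supp} m$, so $m/\phi$ is smooth and compactly supported. Exchanging the $s$-integration with the $(\xi,\eta)$-integration (harmless since $f,g\in\mathcal{S}$ and $m$ is compactly supported) and using $\int_0^t e^{is\phi}\,ds = (e^{it\phi}-1)/(i\phi)$, I obtain the clean decomposition
$$u(t) \;=\; e^{ita(D)}\,T_{m/\phi}(f,g)\; -\; e^{ita(D)}\,T_{(m/\phi)e^{it\phi}}(f,g).$$

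The first term is the announced main term $e^{ita(D)}F$ with $F = T_{m/\phi}(f,g)$, so the matter reduces to showing that the second, residual, term is $O(1/t)$. For this I would use the elementary but crucial algebraic identity $a(\xi+\eta)+\phi(\xi,\eta) = b(\xi)+c(\eta)$, which is immediate from the definition of $\phi$. On the Fourier side it converts
$$e^{ita(D)}\,T_{(m/\phi)e^{it\phi}}(f,g) \;=\; T_{m/\phi}\bigl(e^{itb(D)}f,\, e^{itc(D)}g\bigr),$$
i.e.\ the time-oscillation in the symbol is absorbed into the arguments as free linear evolutions. This is the characteristic ``unwinding of the oscillation'' underlying the space-time resonance method, and is the only non-mechanical step.

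To conclude I would apply the linear dispersive estimates from hypothesis $(H)$ to each factor, giving $\|e^{itb(D)}f\|_{L^\infty}, \|e^{itc(D)}g\|_{L^\infty} \lesssim t^{-1/2}$ since $f,g\in\mathcal{S}\subset L^1$, together with the $L^\infty \times L^\infty \to L^\infty$ boundedness of $T_{m/\phi}$. The latter follows because $m/\phi$ is smooth and compactly supported, so its bilinear kernel lies in $\mathcal{S}(\R^2)$; a clean statement can be drawn from the appendix on multilinear operators. Multiplying the three bounds yields the remainder is $O(t^{-1})$ in $L^\infty$, matching the error term in Lemma~\ref{lemlin}. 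The main obstacle is conceptual rather than technical: recognising that the $s$-integration and the algebraic substitution together reduce a seemingly bilinear-in-time problem to two independent linear dispersive decays; once this is seen, the estimates are routine.
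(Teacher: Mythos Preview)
Your proof is correct and follows essentially the same route as the paper: integrate the Duhamel formula in $s$ using $\int_0^t e^{is\phi}\,ds=(e^{it\phi}-1)/(i\phi)$, then use the identity $a(\xi+\eta)+\phi(\xi,\eta)=b(\xi)+c(\eta)$ to rewrite the residual term as $T_{m/\phi}(e^{itb(D)}f,e^{itc(D)}g)$ and conclude by linear decay. You are in fact more explicit than the paper, which simply writes the decomposition and invokes ``linear decay estimates'' for the $O(1/t)$ bound without specifying the $L^\infty\times L^\infty\to L^\infty$ mapping property.
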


\begin{rem}
The asymptotic behavior of $e^{ita(D)} F$ is given by Lemma~\ref{lemlin}.
\end{rem}

\begin{proof}
The proof is very easy: $u$ is given by
$$
u(t) = -i e^{ita(D)} \int_0^t T_{me^{is\phi}} (f,g)\,ds
$$
or
$$
u(t) = - T_{\frac{m}{\phi}} (e^{itb(D)}f,e^{itc(D)}g) + e^{ita(D)} T_{\frac{m}{\phi}} (f,g). 
$$
The theorem follows since the first term above is $O(\frac{1}{t})$ by the linear decay estimates.
\end{proof}

\subsection{In the absence of space resonances}

\begin{thm}
\label{albatros2}
Assume that $\psi_\eta$ does not vanish on $\operatorname{Supp} m$ (ie $\Delta=\emptyset$), that $\psi_{\xi \xi}(\xi,\eta,\sigma)$ does not vanish on $\operatorname{Supp} m \times [0,1]$,
and that $f$, $g$ belong to $\mathcal{S}$. Fix $M>0$ and $N\in \mathbb{N}$. Then, as $t\rightarrow \infty$,
$$
u(t) = e^{ita(D)} F + O \left( \frac{1}{M^N \sqrt{t}} \right)
$$
where
$$
F = -i \int_0^M e^{i(t-s)a(D)} T_m(e^{is b (D)} f,e^{is c(D)} g) \,ds
$$
(in other words, $e^{ita(D)} F$ is the solution of $\left\{ \begin{array}{l} i\partial_t u + a(D) u = T_{m}\left( v , w \right) \vsp \\ i\partial_t v + b(D) v = 0 \vsp 
\\ i\partial_t w + c(D) w = 0 \vsp \end{array} \right.$ if $0<t<M$, and $i\partial_t u + a(D) u = 0$ if $t>M$, with the data $u(t=0) = 0$, $v(t=0) = f$, and $ w (t=0) = g$).
\end{thm}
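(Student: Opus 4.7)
The strategy is to express the difference $u(t) - e^{ita(D)} F$ via Duhamel's formula and bound the resulting oscillatory integral by combining integration by parts in $\eta$ with the van der Corput estimate in $\xi$. Using the identity $e^{i(t-s)a(D)} T_m(e^{isb(D)}f, e^{isc(D)}g) = e^{ita(D)} T_{m e^{is\phi}}(f, g)$, subtracting $e^{ita(D)}F$ from $u(t)$ leaves only the Duhamel integral over $s \in [M, t]$. Passing to Fourier variables and rescaling $s = t\sigma$, with $X = x/t$, this becomes
\begin{equation*}
u(t, x) - e^{ita(D)} F(x) = -it \int_{M/t}^1 \iint e^{it \psi(\xi, \eta, \sigma)} \mu(\xi, \eta) \widehat f(\xi - \eta) \widehat g(\eta)\, d\eta\, d\xi\, d\sigma,
\end{equation*}
where $\psi(\xi, \eta, \sigma) = X \xi + a(\xi) + \sigma \Phi(\xi, \eta)$ is the phase from Section~\ref{sp}. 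The aim is to bound the right-hand side in $L^\infty_x$ by $M^{-N} t^{-1/2}$; the two hypotheses of the theorem are tailored to complementary tools, namely $\Phi_\eta \ne 0$ for integration by parts in $\eta$ and $\psi_{\xi\xi} \ne 0$ for van der Corput in $\xi$.

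First I would integrate by parts $N+1$ times in $\eta$ via $e^{it\psi} = (it\sigma \Phi_\eta)^{-1} \partial_\eta e^{it\psi}$. Since $|\Phi_\eta|$ is bounded below on $\operatorname{supp} \mu$, each step produces a factor $(t\sigma)^{-1} = s^{-1}$ and differentiates the amplitude once in $\eta$, so the expression becomes
\begin{equation*}
(t\sigma)^{-(N+1)} \iint e^{it\psi(\xi, \eta, \sigma)} G_{N+1}(\xi, \eta)\, d\eta\, d\xi,
\end{equation*}
where $G_{N+1}$ is smooth, compactly supported on $\operatorname{supp} \mu$, \emph{independent of $s$ and $t$}, with $W^{1, \infty}_\xi$ norm controlled by Schwartz seminorms of $f, g$ and by derivatives of $\mu / \Phi_\eta$.

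Next I would apply the van der Corput lemma in $\xi$. For fixed $(\eta, \sigma)$, the phase $t\psi$ satisfies $|\partial_\xi^2(t\psi)| = t |\psi_{\xi\xi}| \gtrsim t$ uniformly on $\operatorname{supp} \mu \times [0,1]$, so
\begin{equation*}
\left| \int e^{it\psi(\xi, \eta, \sigma)} G_{N+1}(\xi, \eta)\, d\xi \right| \lesssim t^{-1/2} \left( \|G_{N+1}(\cdot, \eta)\|_{L^\infty_\xi} + \|\partial_\xi G_{N+1}(\cdot, \eta)\|_{L^1_\xi} \right) \lesssim t^{-1/2}
\end{equation*}
uniformly in $(\eta, \sigma)$; integration over the compact $\eta$-support of $\mu$ preserves the $t^{-1/2}$ bound. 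Combining both steps, the inner double integral is bounded by $O(t^{-1/2}(t\sigma)^{-(N+1)})$, and the outer $\sigma$-integration gives
\begin{equation*}
|u(t, x) - e^{ita(D)} F(x)| \lesssim t \int_{M/t}^1 t^{-1/2} (t\sigma)^{-(N+1)}\, d\sigma = t^{-1/2} \int_M^t w^{-(N+1)}\, dw \lesssim t^{-1/2} M^{-N}
\end{equation*}
after the substitution $w = t\sigma$, which is the asserted bound (adjusting the number of integrations by parts for edge values of $N$).

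The main subtlety is bookkeeping: the $\eta$-integrations by parts must be carried out \emph{before} invoking van der Corput, since otherwise the $\xi$-derivative falling on the oscillatory factor via $\partial_\xi e^{it\psi} = it\psi_\xi e^{it\psi}$ would introduce spurious factors $t\psi_\xi \sim s \Phi_\xi$ that would cancel the $s^{-(N+1)}$ gain. With the proposed ordering, $G_{N+1}$ is $(s,t)$-independent, all inputs to van der Corput are uniformly bounded, and the degeneracy of stationary points near $\sigma = 0$ is harmless because $s = t\sigma \geq M$ throughout the integration domain.
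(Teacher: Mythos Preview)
Your proof is correct and reaches the same bound, but the route differs from the paper's in two linked respects: you integrate by parts in $\eta$ \emph{first} and then apply van der Corput in $\xi$, whereas the paper applies the full stationary phase expansion in $\xi$ first (obtaining a critical point $\xi_0(\eta,\sigma,X)$ and an asymptotic series $\frac{\alpha}{\sqrt{t}} + \frac{\beta}{t} + \dots$), and only then integrates by parts in $\eta$ on the reduced phase $\psi(\xi_0,\eta,\sigma)$. The paper's order works because the chain rule at the critical point gives $\partial_\eta[\psi(\xi_0,\eta,\sigma)] = \sigma\Phi_\eta(\xi_0,\eta)$, so the $\eta$-IBP still gains $(t\sigma)^{-1}$ per step; your order works because the $\eta$-IBP leaves a $(t,\sigma)$-independent amplitude to which van der Corput applies uniformly. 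Your version is slightly more elementary---van der Corput needs only a bound rather than a multi-term expansion with remainder---while the paper's version dovetails with the stationary-phase machinery used in the subsequent Theorem~\ref{goeland}. One small remark: your closing comment about ``spurious factors $t\psi_\xi$'' is not quite the reason the orderings differ; the real point is simply that van der Corput produces a bound rather than an expression, so nothing survives on which to perform a later $\eta$-IBP, whereas stationary phase does produce such an expression and the critical-point identity makes the subsequent $\eta$-IBP go through.
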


\begin{proof}
Starting from the stationary phase formulation given in Section~\ref{sp}, it suffices to show that
\begin{equation}
\label{pingouin}
 \int_{M/t}^1 \int \int e^{it \psi(\xi,\eta,\sigma)} \mu(\xi,\eta) \widehat{f}(\xi-\eta) \widehat{g}(\eta) \,d\eta \,d\xi\,d\sigma
\end{equation}
is $O\left(\frac{1}{M^N t^{\frac{3}{2}}}\right)$.

First apply the stationary phase lemma in $\xi$ in the above. The vanishing set of $\psi_\xi$ depends on $X$. If $X$ is such that $\psi_\xi$ does not vanish, (\ref{pingouin}) is 
$O\left( \frac{1}{t^N}\right)$ for any $N$ and we are done. Otherwise, $\psi_\xi$ vanishes for some $\xi$, which we denote $\xi_0$, and which is a function of $X$, $\eta$, 
and $\sigma$. We can assume without loss of generality that $\xi_0$ is unique. Since $\psi_{\xi \xi}$ does not vanish by assumption, the stationary phase lemma gives
$$
(\ref{pingouin}) = \int_{M/t}^1 \int e^{it\psi(\xi_0,\eta,\sigma)} \left[ \frac{\alpha(\xi,\eta,\sigma)}{\sqrt{t}} + \frac{\beta(\xi,\eta,\sigma)}{t} + 
\frac{\gamma(\xi,\eta,\sigma)}{t\sqrt{t}} + O\left( \frac{1}{t^2} \right) \right]\,d\eta\,d\sigma,
$$
where $\alpha$, $\beta$ and $\gamma$ are smooth functions which we do not specify. The fourth summand in~(\ref{pingouin}) is already small enough. We will now show how to deal 
with the first one, and this will conclude the proof since the second and third ones are easier (better decay). Thus we now want to show that
\begin{equation}
\label{pingouin2}
\int_{M/t}^1 \int e^{it\psi(\xi_0,\eta,\sigma)} \frac{\alpha(\xi,\eta,\sigma)}{\sqrt{t}} \,d\eta\,d\sigma 
\end{equation}
is $O\left( \frac{1}{M^N t} \right)$. In order to take advantage of oscillations in $\eta$ observe that
$$
\partial_\eta \left[\psi(\xi_0(\eta,\sigma,X),\eta,\sigma)\right] = \partial_\eta \xi_0 \left[ \partial_\xi \psi \right] (\xi_0,\eta,\sigma) + \left[ \partial_\eta \psi \right] 
(\xi_0,\eta,\sigma) = \left[ \partial_\eta \psi \right] (\xi_0,\eta,\sigma) = \sigma \left[ \partial_\eta \Phi \right] (\xi_0,\eta).
$$
By hypothesis, $\partial_\eta \Phi$ does not vanish, therefore
$$
\left| \partial_\eta \left[\psi(\xi_0,\eta,\sigma)\right] \right| \gtrsim \sigma.
$$
Integrating by parts $N+1$ times with the help of the identity $\frac{1}{t \partial_\eta \left[\psi(\xi_0,\eta,\sigma)\right]} \partial_{\eta}  e^{it\psi(\xi_0,\eta,\sigma)} 
= i e^{it\psi(\xi_0,\eta,\sigma)}$, we obtain
$$
\left| (\ref{pingouin2}) \right| \lesssim \int_{M/t}^1 \frac{1}{(\sigma t)^{N+1}\sqrt{t}} \,d\sigma \lesssim \frac{1}{M^N t^{\frac{3}{2}}},
$$
which concludes the proof.
\end{proof}

\subsection{Space-time resonance set reduced to a point}

\begin{thm}
\label{goeland}
Assume that $f$, $g$ belong to $\mathcal{S}$, that there exists a unique $(\xi_0,\eta_0)$ such that
$$
\Phi(\xi_0,\eta_0) = \Phi_\eta(\xi_0,\eta_0) = 0,
$$
and that the following technical, generic hypotheses are satisfied:
\begin{itemize}
\item (we are under the standing assumption $(H)$, but only the non-vanishing of $a''$ is used here)             
\item $\Phi_\xi(\xi_0,\eta_0) \neq 0$
\item $\Phi_{\eta \eta}(\xi_0,\eta_0) \neq 0$,
\end{itemize}
and that $\operatorname{Supp} m$ is contained in a small enough neighbourhood of $(\xi_0,\eta_0)$.

\medskip

Recall that $X = \frac{x}{t}$, and set $\Sigma(X) \overset{def}{=} - \frac{1}{\Phi_\xi(\xi_0,\eta_0)} (a'(\xi_0)+X)$. Let $\epsilon>0$ be small enough. 
Assume without loss of generality that $\Phi_\xi(\xi_0,\eta_0) > 0$. Then
\begin{itemize}
\item If $X< - \Phi_\xi(\xi_0,\eta_0) - a'(\xi_0) - \epsilon$,
$$
u(t) = O\left(\frac{1}{t^N} \right)
$$
for any $N$.
\item If $- \Phi_\xi(\xi_0,\eta_0) - a'(\xi_0) - \epsilon <X< - \Phi_\xi(\xi_0,\eta_0) - a'(\xi_0) + \epsilon$,
$$
\displaystyle u(t) = \frac{1}{\sqrt{t}} A_2(\Sigma) \mathcal{G}_1(\sqrt{t}\left[\Sigma-1\right]) + O\left(\frac{1}{t}\right).
$$
for a smooth function $A_2$.
\item If $- \Phi_\xi(\xi_0,\eta_0) - a'(\xi_0) + \epsilon <X< - a'(\xi_0) - \epsilon$,
$$
\displaystyle u(t,x) = \frac{1}{\sqrt{t}} \frac{A_1}{\sqrt{\Sigma(X)}} e^{it\psi(\xi_0,\eta_0,\Sigma)} 
+ O\left(\frac{1}{t} \right)
$$
for a constant $A_1$.
\item If $- a'(\xi_0) - \epsilon < X <  - a'(\xi_0) + \epsilon$,
$$
u(t) = A_0(\Sigma) \frac{1}{t^{1/4}} \mathcal{G}_2 (\sqrt{t}\Sigma) 
+ \left\{ \begin{array}{ll} O(t^{-3/4}) & \mbox{if $|\sqrt{t}\Sigma|<1$} \\ O\left( \frac{|\log t|}{\sqrt{t}} \right) & \mbox{if $|\sqrt{t}\Sigma|>1$} . \end{array} \right.
$$
for a smooth function $A_0$.
\item If $- a'(\xi_0) + \epsilon < X$,
$$
u(t) = O\left(\frac{1}{t^N} \right).
$$ 
for any $N$.
\end{itemize}
\end{thm}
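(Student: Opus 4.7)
My starting point is the three-dimensional stationary phase formulation introduced in Section~\ref{sp}, namely
$$u(t,x) = -\frac{i}{\sqrt{2\pi}}\,t\int_0^1\int\int e^{it\psi(\xi,\eta,\sigma)}\mu(\xi,\eta)\widehat{f}(\xi-\eta)\widehat{g}(\eta)\,d\eta\,d\xi\,d\sigma,$$
with $\psi(\xi,\eta,\sigma)=a(\xi)+\sigma\Phi(\xi,\eta)+X\xi$. By uniqueness of the space-time resonance, the only interior critical point of $\psi$ is at $(\xi_0,\eta_0,\Sigma(X))$, and it lies in $(0,1)$ exactly when $-\Phi_\xi(\xi_0,\eta_0)-a'(\xi_0)<X<-a'(\xi_0)$; boundary critical points of the Type~1 family from~(\ref{pinson}) appear at $\sigma=0$ for $X\approx-a'(\xi_0)$, and Type~2 critical points coalesce with $\sigma=1$ for $X\approx-\Phi_\xi(\xi_0,\eta_0)-a'(\xi_0)$. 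The five cases of the theorem correspond exactly to the five positions of $X$ relative to these two transitions.

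Cases~1 and~5 carry no critical point on the closed domain. I would dispose of them by $N$-fold integration by parts in $\sigma$, using the lower bound $|\psi_\sigma|=|\Phi|\gtrsim1$ on $\operatorname{Supp} m$ (shrunk if necessary); the boundary terms at $\sigma=0,1$ that this generates are in turn integrated by parts in $\xi$ or $\eta$, where the phase gradient is uniformly non-vanishing under the standing assumption on $X$, yielding the claimed $O(t^{-N})$ bound.

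Case~3, in which $\Sigma(X)$ is bounded away from both $0$ and $1$, is a standard three-dimensional stationary phase problem. A cofactor expansion of the Hessian displayed in Section~\ref{sp} gives, at $(\xi_0,\eta_0,\Sigma)$,
$$\det\operatorname{Hess}_{\xi,\eta,\sigma}\psi = -\Sigma(X)\,\Phi_\xi(\xi_0,\eta_0)^2\,\Phi_{\eta\eta}(\xi_0,\eta_0),$$
which is nonzero by the theorem's hypotheses and the lower bound $\Sigma(X)\gtrsim\epsilon/\Phi_\xi(\xi_0,\eta_0)$ available in this range. Standard stationary phase together with the $t$ prefactor produces the main term of order $t^{-1/2}\Sigma(X)^{-1/2}$ exhibited in the statement, with $O(t^{-1})$ remainder absorbing both the next order of the expansion and the rapidly decaying $\sigma=0,1$ boundary contributions.

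The main obstacle is Cases~2 and~4, where the critical point coalesces with a $\sigma$-endpoint and classical stationary phase breaks down. My strategy in both cases is to perform stationary phase first in the spatial variables and then treat the resulting one-dimensional $\sigma$-integral via the endpoint oscillatory integrals of Appendix~\ref{Aodoi}. Case~2 is tractable because the $(\xi,\eta)$-Hessian of $\psi$ (whose determinant is $\sigma\Phi_\xi^2\Phi_{\eta\eta}$) remains non-degenerate for $\sigma$ near $1$, reducing the problem to a Fresnel-type integral truncated at $\sigma=1$, from which $\mathcal{G}_1(\sqrt t\,(\Sigma-1))$ emerges. Case~4 is genuinely the hardest: at $\sigma=0$ the $\eta$-Hessian $\sigma\Phi_{\eta\eta}$ vanishes, so stationary phase in $\eta$ degenerates at exactly the moment when the interior critical point hits the boundary. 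My plan is to integrate out $\xi$ first using $a''\neq 0$, rescale $\eta$ by the natural scale $(\sigma t)^{1/2}$ to absorb the $\eta$-degeneracy, and then recognize the remaining 1D $\sigma$-integral as $\mathcal{G}_2(\sqrt t\,\Sigma)$ from Appendix~\ref{Aodoi}. The simultaneous vanishing of $\psi_{\eta\eta}$ with the endpoint collision is what degrades the decay from $t^{-1/2}$ to $t^{-1/4}$, and the two error regimes stated in Case~4 correspond to whether $X$ lies inside or outside the scaling window $|\sqrt t\,\Sigma(X)|\lesssim1$ of this coalescence.
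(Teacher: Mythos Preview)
Your proposal is correct and takes essentially the same approach as the paper, which also partitions the $\sigma$-interval into interior/near-$0$/near-$1$ pieces, applies three-dimensional stationary phase on the interior piece (your Case~3), and near the endpoints performs sequential stationary phase---in $(\xi,\eta)$ for $\sigma\approx 1$, and in $\xi$ then $\eta$ for $\sigma\approx 0$---followed by the one-dimensional $\sigma$-integral via Proposition~\ref{fregate} to extract $\mathcal{G}_1$ and $\mathcal{G}_2$ respectively. One minor slip: the formula you quote for the $(\xi,\eta)$-Hessian determinant in Case~2 is the three-dimensional one; the two-dimensional determinant is $\sigma\bigl[(a''+\sigma\Phi_{\xi\xi})\Phi_{\eta\eta}-\sigma\Phi_{\xi\eta}^2\bigr]$, which is still nondegenerate near $\sigma=1$ since at $(\xi_0,\eta_0,1)$ it equals $b''(\xi_0-\eta_0)c''(\eta_0)\neq 0$ under~$(H)$.
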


\begin{rem} 
\begin{enumerate}
\item The above theorem provides an efficient equivalent of $u(t)$ for large $t$ in all the zones of the space-time plane $(x,t)$, except where $\Sigma$ is small, but larger that 
$\frac{1}{|\log t|^2}$ (for then $\frac{|\log t|}{\sqrt{t}} > \frac{1}{t^{1/4}} |\mathcal{G}_2 (\sqrt{t}\Sigma)|$). Dealing with this region would require fairly technical 
further developments, from which we refrain.
\item If $\Phi$ vanishes at order 1 on $\Gamma$ and $\Delta$, the conditions $\Phi_\xi(\xi_0,\eta_0) \neq 0$ and $\Phi_{\eta \eta}(\xi_0,\eta_0) \neq 0$ are equivalent to 
$\Gamma$ and $\Delta$ intersecting transversally at $(\xi_0,\eta_0)$. Indeed, a tangent vector to $\Gamma$ (respectively, $\Delta$) at $(\xi_0,\eta_0)$ is given by $\left( 
\begin{array}{c} \partial_\eta \Phi(\xi_0,\eta_0) \\ - \partial_\xi \Phi(\xi_0,\eta_0) \end{array} \right) = \left( \begin{array}{c} 0 \\ - \partial_\xi \Phi(\xi_0,\eta_0) \end{array} \right)$ 
(respectively $\left( \begin{array}{c} \partial_\eta^2 \Phi(\xi_0,\eta_0) \\ - \partial_\eta \partial_\xi \Phi(\xi_0,\eta_0) \end{array} \right)$). These two vectors are
not colinear if $\partial_\xi \Phi(\xi_0,\eta_0) \partial_\eta^2 \Phi(\xi_0,\eta_0) \neq 0$.
\item The hypothesis that $\operatorname{Supp} m$ be restricted in a small enough neighbourhood is not restrictive: away from $(\xi_0,\eta_0)$, either $\Phi$ or $\Phi_\eta$ is
non zero, so either Theorem~\ref{albatros1} or Theorem~\ref{albatros2} applies.
\end{enumerate}
\end{rem}

The proof distinguishes between the three regions: $\sigma$ away from $0$ and $1$, $\sigma$ close to $0$, and $\sigma$ close to $1$. Starting from the formula derived in 
Section~\ref{sp}, we split the time integral as follows:
\begin{equation}
\begin{split}
u(t,x) & =  -\frac{i}{\sqrt{2\pi}} t \int_0^1 \int \int e^{it \psi(\xi,\eta,\sigma)} \mu(\xi,\eta) \widehat{f}(\xi-\eta) \widehat{g}(\eta) \,d\xi \,d\eta \,d\sigma \\
& =  -\frac{i}{\sqrt{2\pi}} t \int_0^t \int \int \left[ \chi_I(\sigma) + \chi_{II} (\sigma) + \chi_{III}(\sigma) \right] \dots \,d\xi\,d\eta\,d\sigma \\
& \overset{def}{=} I + II + III.
\end{split}
\end{equation}
In the above, $\chi_I$, $\chi_{II}$ and $\chi_{III}$ are three smooth, positive functions, adding up to one for each $\sigma$ and such that
\begin{itemize}
\item $\chi_{II} (\sigma) = 0$ if $\sigma<\delta$ and $1$ if $\sigma>2\delta$
\item $\chi_{I} (\sigma) = 0$ if $\sigma<\delta$ or $\sigma>1-\delta$, and $1$ if $2\delta<\sigma<1-2\delta$
\item $\chi_{III} (\sigma) = 0$ if $\sigma<1-2\delta$ and $1$ if $\sigma>1-\delta$.
\end{itemize}
In the above, $\delta > 0$ is a sufficiently small number.

\subsubsection{The contribution of $\sigma$ away from $0$ and $1$}

This is the simplest case since it can be settled by resorting to elementary stationary phase considerations. Our aim is to estimate
$$
I = -\frac{i}{\sqrt{2\pi}} t \int_0^1 \int \int \chi_I(\sigma) e^{it \psi(\xi,\eta,\sigma)} \mu(\xi,\eta) \widehat{f}(\xi-\eta) \widehat{g}(\eta) \,d\xi \,d\eta \,d\sigma.
$$
The phase $\psi(\xi,\eta,\sigma)$ is also a function of $X$, but we consider from now on that $X$ is fixed.

Since $\sigma$ does not vanish on $\operatorname{Supp} \chi_I$, the gradient 
$\nabla_{\xi,\eta,\sigma} \psi = \left( \begin{array}{l}   a' + \sigma \Phi_\xi + X  \\ \sigma \Phi_\eta \\ \Phi  \end{array} \right)$ vanishes if
$$
\Phi(\xi,\eta) = \Phi_\eta(\xi,\eta) = 0 \quad \mbox{and} \quad a'(\xi) + \sigma \Phi_\xi(\xi,\eta) + X = 0.
$$
The first two conditions impose $(\xi,\eta) = (\xi_0,\eta_0)$ whereas the third one gives 
$$\sigma = \Sigma(X) \overset{def}{=} - \frac{1}{\Phi_\xi(\xi_0,\eta_0)} \left(a'(\xi_0) + X \right)$$
(which makes sense under the assumption that $\Phi_\xi(\xi_0,\eta_0) \neq 0$).
We assume that $X$ is such that $\sigma$ given by the above line lies in $\operatorname{Supp}m$; if this is not the case, the contribution of $I$ is negligible.
The Hessian at $(\Sigma,\xi_0,\eta_0)$ is
$$
\operatorname{Hess}_{\xi,\eta,\sigma} \psi(\xi_0,\eta_0,\Sigma) = \left( \begin{array}{lll}   a'' + \Sigma \Phi_{\xi \xi}(\xi_0,\eta_0)  
& \Sigma \Phi_{\xi \eta}(\xi_0,\eta_0) & \Phi_\xi(\xi_0,\eta_0)  \\  \Sigma \Phi_{\xi \eta}(\xi_0,\eta_0) & \Sigma \Phi_{\eta \eta}(\xi_0,\eta_0) & 0 \\  \Phi_\xi(\xi_0,\eta_0) & 0 & 0 \end{array} \right).
$$
with a determinant
$$
\operatorname{det} \operatorname{Hess}_{\xi,\eta,\sigma} \psi(\xi_0,\eta_0,\Sigma) = -\Sigma \Phi_\xi(\xi_0,\eta_0)^2 \Phi_{\eta \eta}(\xi_0,\eta_0).
$$
Let us assume that $\Phi_{\eta \eta}(\xi_0,\eta_0)$ is not zero, which is generically satisfied. The stationary phase principle gives then (\cite{Stein})
$$
u(t,x) = \frac{1}{\sqrt{t}} \frac{\chi_I(\Sigma(X))}{\sqrt{\Sigma(X)}} e^{it\psi(\xi_0,\eta_0,\Sigma)} A_1 + O \left(\frac{1}{t}\right)
$$
with 
$$
A_1 \overset{def}{=} \frac{(2\pi)^{3/2} e^{i\frac{\pi}{4}S}}{|\Phi_\xi(\xi_0,\eta_0)| \sqrt{|\Phi_{\eta \eta}(\xi_0,\eta_0)|}} \mu(\xi_0,\eta_0) \widehat{f}(\xi_0-\eta_0) \widehat{g}(\eta_0)
$$
where $S$ is the signature of $\operatorname{Hess}_{\xi,\eta,\sigma} \psi(\xi_0,\eta_0,\Sigma)$.

\subsubsection{The contribution of $\sigma$ close to $0$}

\noindent \underline{Step 1: splitting between small and large times}
Our aim is to estimate
$$
II = -\frac{i}{\sqrt{2\pi}} t \int_0^1 \int \int \chi_{II}(\sigma) e^{it \psi(\xi,\eta,\sigma)} \mu(\xi,\eta) \widehat{f}(\xi-\eta) \widehat{g}(\eta) \,d\xi \,d\eta\,d\sigma.
$$
which we split into
$$
II = -\frac{i}{\sqrt{2\pi}} t \left[ \int_0^{1/t} + \int_{1/t}^1 \dots \,d\sigma \right] \overset{def}{=} II_1 + II_2.
$$
Rescaling $II_1$, we see that it can be written
$$
u(t) = e^{ita(D)} F \quad \mbox{where} \quad F = -\frac{i}{\sqrt{2\pi}} \int_0^1 \int e^{i(s-t)a(D)} T_m(e^{-is b (D)} f,e^{-is c(D)} g) \,ds,
$$
so that it reduces to a linear solution for $t$ sufficiently large. We now focus on $II_2$.

\noindent \underline{Step 2: stationary phase in $\xi$}

We want to apply the stationary phase lemma in the variable $\xi$. Observe that
$$
\partial_\xi \psi(\xi,\eta,\sigma) = a'(\xi) + \sigma \Phi_\xi + X 
$$
Thus for $\eta$, $\sigma$ and $X$ fixed, $\partial_\xi \psi(\xi,\eta,\sigma)=0$ might, or not, have a solution in $\operatorname{Supp}m$. 
If not, the contribution is negligible, so let us assume that this equation has a solution $\xi= \Xi(X,\eta,\sigma)$. Next,
$$
\partial_\xi^2 \psi(\xi,\eta,\sigma) = a''(\xi) + \sigma \Phi_{\xi \xi}.
$$
Since we are assuming that $a''$ does not vanish, taking $\delta$ small enough, we can thus ensure that $\partial_\xi^2 \psi(\xi,\eta,\sigma)$ does not vanish.
Applying the stationary phase lemma gives then
\begin{equation}
\label{sterne}
II_2 = t \int_{1/t}^1 \int G(\Xi,\eta) e^{it \psi(\Xi,\eta,\sigma)} 
\left[ \frac {\sqrt{2\pi}e^{iS_0\frac{\pi}{4}}} {\sqrt{\psi_{\xi \xi}(\Xi,\eta,\sigma)} \sqrt{t}} + \frac {\alpha(\eta,\sigma)} {t} + 
\frac{\beta(\eta,\sigma)} {t\sqrt{t}} + O\left( \frac{1}{t^2} \right) \right] \,d\eta\,d\sigma,
\end{equation}
where $S_0=\operatorname{sign}(\psi_{\xi \xi}(\Xi,\eta,\sigma))$, $\alpha$ and $\beta$ are smooth functions, and we denoted for simplicity
$$
G(\xi,\eta,\sigma) = -\frac{i}{\sqrt{2\pi}} \chi_{II}(\sigma) \mu(\xi,\eta) \widehat{f}(\xi-\eta) \widehat{g}(\eta).
$$
The last term in~(\ref{sterne}), containing $O\left( \frac{1}{t^2} \right)$, contributes $O\left( \frac{1}{t^2} \right)$ to $u$; thus we can discard it, and focus on
\begin{equation}
\label{sternearctique}
t \int_{1/t}^1 \int G(\Xi,\eta) e^{it \psi(\Xi,\eta,\sigma)} 
\left[ \frac {\sqrt{2\pi}e^{iS_0\frac{\pi}{4}}} {\sqrt{\psi_{\xi \xi}(\Xi,\eta,\sigma)} \sqrt{t}} + \frac {\alpha(\xi,\eta,\sigma)} {t} + 
\frac{\beta(\xi,\eta,\sigma)} {t\sqrt{t}} \right] \,d\eta\,d\sigma.
\end{equation}

\bigskip \noindent \underline{Step 3: stationary phase in $\eta$} Observe that
$$
\partial_\eta \left[ \psi(\Xi(\eta,\sigma),\eta,\sigma) \right] = \partial_\eta \Xi \left[ \partial_\xi \psi \right] (\Xi,\eta,\sigma) + 
\left[ \partial_\eta \psi \right] (\Xi,\eta,\sigma) = \left[ \partial_\eta \psi \right] (\Xi,\eta,\sigma) = \sigma \left[ \partial_\eta \Phi \right] (\Xi,\eta)
$$
Just like for the stationary phase in $\xi$, we denote $\eta = H(\sigma,X)$ the solution of $\left[ \partial_\eta \Phi \right] (\Xi,\eta) = 0$ (if no solution exist, 
the contribution is negligible). Next, set
$$
\partial_\eta^2 \left[ \psi(\Xi(\eta,\sigma),\eta,\sigma) \right] = \sigma \partial_\eta \Xi \left[ \partial_\xi \partial_\eta \Phi \right] (\Xi,\eta) + 
\sigma \left[ \partial_\eta^2 \Phi \right] (\Xi,\eta) \overset{def}{=} \sigma Z(\eta,\sigma).
$$
We need to assume here that 
$$
Z(\eta,\sigma) \neq 0
$$
if $(\sigma,\Xi,\eta) \in \operatorname{Supp} m \chi_{II}$. 
Since the support of $m$, as well as $\delta$, are assumed to be small enough, it suffices that $Z(\eta_0,0) \neq 0$; but a simple computation reveals that 
$Z(\eta_0,0) = \phi_{\eta \eta}(\eta_0,\xi_0)$, which is non zero by hypothesis. The stationary phase lemma in $\eta$ applied to~(\ref{goeland})gives then
\begin{equation}
\label{goeland2}
\begin{split}
(\ref{sternearctique}) & = t \int_{1/t}^1 G(\Xi,H) e^{it \psi(\Xi,H,\sigma)} \frac{\sqrt{2\pi}e^{iS_1\frac{\pi}{4}}}{\sqrt{t} \sqrt{\sigma} Z(H,\sigma)} 
\left[ \frac {\sqrt{2\pi}e^{iS_0\frac{\pi}{4}}} {\sqrt{\psi_{\xi \xi}(\Xi,H,\sigma)} \sqrt{t}} + \frac {\alpha(H,\sigma)} {t} + 
\frac{\beta(H,\sigma)} {t\sqrt{t}} \right.\\
&\quad \quad \quad \quad\quad \quad\quad \quad\quad \quad\quad \quad \left. + O\left( \frac{1}{t \sqrt{t} \sigma} \right) \right] \,d\sigma,
\end{split}
\end{equation}
where $S_1 = \operatorname{sign}(Z(H,\sigma))$, $\widetilde{\alpha}$ and $\widetilde{\beta}$ are smooth functions. The last summand in~(\ref{goeland2}) contributes
$$
O \left( \frac{t}{t\sqrt{t}} \int_{1/t}^1 \frac{d\sigma}{\sigma} \right) = O \left(\frac{\log t}{\sqrt{t}}\right),
$$
we discard it and focus on
\begin{equation}
\label{goeland3}
t \int_{1/t}^1 G(\Xi,H) e^{it \psi(\Xi,H,\sigma)} \frac{\sqrt{2\pi}e^{iS_1\frac{\pi}{4}}}{\sqrt{t} \sqrt{\sigma} Z(H,\sigma)} \left[ \frac {\sqrt{2\pi}e^{iS_0\frac{\pi}{4}}} 
{\sqrt{\psi_{\xi \xi}(H,\sigma)} \sqrt{t}} + \frac {\alpha(H,\sigma)} {t} + 
\frac{\beta(\xi,H,\sigma)} {t\sqrt{t}} \right] \,d\sigma,
\end{equation}

\bigskip \noindent \underline{Step 4: stationary phase in $\sigma$} In this final step, we are not going to apply the standard stationary phase lemma, but rather its variant given
in Proposition 6.2. Differentiating in $\sigma$ the phase in~(\ref{goeland2}) gives
$$
\partial_\sigma \left[ \psi(\Xi(H(\eta,\sigma),\sigma),H(\sigma),\sigma) \right] = \left[ \partial_\sigma \psi \right] (\Xi,H,\sigma) = \Phi(\Xi(H,\sigma),H(\sigma))
$$
since $\partial_\xi \psi = \partial_\eta \psi = 0$ at the point $(\Xi,H,\sigma)$. Thus $\partial_\sigma \phi = 0$ if $\Phi(\Xi,H) = 0$. Since on the other hand $\partial_\eta 
\Phi(\Xi,H) = 0$ by definition of $H$,
$$
\Phi(\Xi(H,\sigma),H(\sigma)) = 0 \quad \mbox{if and only if} \quad H(\sigma) = \eta_0 \quad \mbox{and} \quad \Xi(\eta_0,\sigma) = \xi_0.
$$
But by definition of $\Xi$ this implies
$$
\sigma = \Sigma(X) = -\frac{X + a'(\xi_0)}{\psi_\xi(\xi_0,\eta_0)}.
$$
In order to apply Proposition~\ref{fregate}, we need to check that
$$
\partial_\sigma^2 \left[ \psi(\Xi(H(\eta,\sigma),\sigma),H(\sigma),\sigma) \right] (\Sigma) = \partial_\sigma \left[ \Phi(\Xi(H,\sigma),H(\sigma)) \right] (\Sigma) \neq 0.
$$
Since $\delta$ is chosen small enough, it suffices to check that it holds for $\Sigma = 0$ (that is, when $X$ is such that $\Sigma(X)=0$). This follows from the following 
computation:
\begin{equation*}
\begin{split}
\partial_\sigma \left[ \Phi(\Xi(H,\Sigma),H(\Sigma)) \right] (0) & = \partial_\xi \Phi(\xi_0,\eta_0) \left[ \partial_\sigma \Xi(\eta_0,0) + \partial_\sigma H(0) 
\partial_\eta \Xi (\eta_0,0) \right] + \partial_\eta \Phi(\xi_0,\eta_0) \partial_\sigma H(0) \\
& = \partial_\xi \Phi(\xi_0,\eta_0) \partial_\sigma \Xi(\eta_0,0) = - \frac{\phi_\xi(\xi_0,\eta_0)^2}{a''(\xi_0)} \neq 0,
\end{split}
\end{equation*}
where we used that $\partial_\eta \Phi(\xi_0,\eta_0) = \partial_\eta \Xi (\eta_0,0) = 0$ and $\partial_\sigma \Xi(\eta_0,0) = - \frac{\phi_\xi(\xi_0,\eta_0)}{a''(\xi_0)}$. 
We now write 
$$
(\ref{goeland3}) = t \int_{1/t}^{1} \dots \,d\sigma = t \int_0^1 - t \int_0^{1/t} \dots \,d\sigma.
$$
The second summand, $t \int_0^{1/t} \dots \,d\sigma$, is directly estimated to be $O\left(\frac{1}{\sqrt{t}}\right)$. As for the first summand, $t \int_0^1\dots \,d\sigma$, apply
Proposition~\ref{fregate} $(iv)$ to obtain
$$
(\ref{goeland3}) = A_0(\Sigma) \mathcal{G}_2(\sqrt{t}\Sigma) +
\left\{ \begin{array}{ll} O(t^{-3/4}) & \mbox{if $|\sqrt{t}\Sigma|<1$} \\ O\left( \sqrt{\frac{|\Sigma|}{t}} \right) & \mbox{if $|\sqrt{t}\Sigma|>1$}, \end{array} \right.
$$
where $A_0$ is a smooth function which we do not detail here.

\subsubsection{The contribution of $\sigma$ close to $1$}

In order to estimate
$$
III = -\frac{i}{\sqrt{2\pi}} t \int_0^1 \int \int \chi_{III}(\sigma) e^{it \psi(\xi,\eta,\sigma)} \mu(\xi,\eta) \widehat{f}(\xi-\eta) \widehat{g}(\eta) \,d\eta \,d\xi\,d\sigma.
$$
an approach similar to the one used for $II$ can be followed, the details being more simple: first apply the stationary phase Lemma in the $(\xi,\eta)$ variables, and then
Proposition~\ref{fregate} $(i)$. We do not give the details here.

\subsection{Conclusion}

\subsubsection{Space-time localization of the waves}
As a conclusion of the asymptotic analysis of waves which has just been carried out, it is interesting to compare the space-time localisations of the emerging wave $u$, 
solution of~(\ref{eq:dispersive}), in the three situations which we examined. To simplify, suppose that $f$ and $g$ are localized in space close to 0, and in frequency close to 
$\widetilde{\xi} - \widetilde{\eta}$ and $\widetilde{\eta}$ respectively. Then
\begin{itemize}
\item In the absence of space-time resonances, $u$ will be localized where $X \sim -a'(\widetilde{\xi})$, where it will have size $\sim \frac{1}{\sqrt{t}}$.
\item If the space-time resonant set is reduced to a point, and under the assumptions of Theorem~\ref{goeland}, $u$ will have size $\sim \frac{1}{t^{1/4}}$ if 
$-\Phi_\xi(\eta_0,\xi_0) -a'(\xi_0) <X<- a'(\xi_0)$, and size $\sim \frac{1}{\sqrt{t}}$ if $X \sim -a'(\xi_0)$.
\end{itemize}

\subsubsection{Lower bound} \label{subsub:lower} The asymptotic equivalents which have been computed also provide lower bounds for $L^p$ norms of $u$. In the absence of space time resonances, we do not
learn anything, since the equivalent for $u$ is similar to a linear solution. However, in the case when Theorem~\ref{goeland} applies (ie when $\Delta$ and 
$\Gamma$ intersect transversally at a point), we get for $t$ large
$$
\|u(t)\|_{L^q} \gtrsim 
\left\{
\begin{array}{l} 
\log t \quad \mbox{for $q=2$} \\
t^{\frac{1}{2q}-\frac{1}{4}} \quad \mbox{for $2 < q \leq \infty$}.
\end{array}
\right.
$$

\section{Non-localized data}

\label{nld}

In this section, the data are only supposed to belong to $L^2$, as opposed to the next one, where the data will belong to weighted $L^2$ spaces.

\subsection{Main results}

\begin{thm} 
\label{mainth} Simply assume that $m$ is smooth and compactly supported and $a,b,c$ are real-valued. In the different following situations, for $q\in [2,\infty]$, the solution $u$ of~(\ref{eq:dispersive}) satisfies
$$ \left\| u(t) \right\|_{L^q} \lesssim \alpha(t) \|f\|_{L^2} \|g\|_{L^2} $$
with $\alpha(t)$ as follows: \\
\begin{center}
\renewcommand{\arraystretch}{1.8}
\begin{tabular}{|l|c|}
  \hline
     & $\alpha(t)$   \\
 \hline
In general & $t$  \\
 \hline
 $\Gamma$ is empty & $1$  \\
\hline
 $\Gamma$ is a point where $\phi$ vanishes at order two & $t^{\frac{1}{2}+\frac{1}{2q}}$  \\ 
\hline
$\Gamma$ is a non-characteristic curve where $\phi$ vanishes at order one  & 
$\begin{array}{ll} t^{\frac{1}{q}} & \mbox{if $2 \leq q <\infty$} \\ \langle \log t \rangle & \mbox{if $q=\infty$} \end{array}$
\\
\hline
$\Gamma$ is a curve with non-vanishing curvature where $\phi$ vanishes at order one & $t^{\frac{1}{4}+\frac{1}{2q}}$  \\
\hline
$\Gamma$ is a general curve where $\phi$ vanishes at order one & $t^{\frac{1}{2}}$.   \\
\hline
\end{tabular}
\end{center}
\end{thm}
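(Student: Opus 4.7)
The plan is to write the solution as $u(t) = -i e^{ita(D)} T_{M_t}(f,g)$ with bilinear symbol
$$
M_t(\xi,\eta) = m(\xi,\eta)\int_0^t e^{is\phi(\xi,\eta)}\,ds = m(\xi,\eta)\,\frac{e^{it\phi(\xi,\eta)}-1}{i\phi(\xi,\eta)},
$$
which obeys the pointwise bound $|M_t(\xi,\eta)| \lesssim |m(\xi,\eta)|\min(t, 1/|\phi(\xi,\eta)|)$. Since $e^{ita(D)}$ is a unimodular Fourier multiplier and $T_{M_t}(f,g)$ has Fourier support in the fixed compact set $\operatorname{supp} m + \operatorname{supp} m$, Bernstein's inequality makes the outer exponential essentially transparent once endpoint estimates are in hand: it suffices to bound $\|T_{M_t}(f,g)\|_{L^q}$ at $q=2$ and $q=\infty$, then interpolate.

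The two universal endpoint inequalities (obtained by Plancherel plus Cauchy--Schwarz on the defining bilinear integral, essentially the content of Appendix~\ref{Ame}) are
$$
\|T_\mu(f,g)\|_{L^\infty}\leq \|\mu\|_{L^2(\R^2)}\|f\|_2\|g\|_2, \qquad \|T_\mu(f,g)\|_{L^2}\lesssim \bigl(\sup_\xi \|\mu(\xi,\cdot)\|_{L^2(\R)}\bigr)\|f\|_2\|g\|_2.
$$
For each row of the table I then estimate $\|M_t\|_{L^2(\R^2)}$ and $\sup_\xi\|M_t(\xi,\cdot)\|_{L^2(\R)}$ using the vanishing structure of $\phi$ on $\Gamma$. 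The general row uses only the trivial $|M_t|\lesssim t$. For $\Gamma=\emptyset$ I substitute the algebraic identity from the proof of Theorem~\ref{albatros1}, $u = -T_{m/\phi}(e^{itb(D)}f, e^{itc(D)}g) + e^{ita(D)}T_{m/\phi}(f,g)$, which gives a $t$-uniform $L^2$ bound upgraded to $L^q$ by Bernstein. For $\Gamma$ a single point where $\phi$ vanishes to order 2, $|\phi|\gtrsim\operatorname{dist}^2$ yields $\|M_t\|_{L^2}^2\sim t$ and $\sup_\xi\|M_t(\xi,\cdot)\|_{L^2}^2\sim t^{3/2}$, hence endpoints $t^{1/2}$ and $t^{3/4}$, which interpolate to $t^{1/2+1/(2q)}$. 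For a generic order-1 curve both endpoint norms are $\sim t^{1/2}$, giving the uniform $t^{1/2}$ rate.

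The two remaining rows require sharper endpoint estimates. For $\Gamma$ a non-characteristic order-1 curve, the $L^2$ rate $t^{1/2}$ follows as above, but the crude $\|M_t\|_{L^2}\sim t^{1/2}$ is too weak at $q=\infty$. The refinement consists in introducing coordinates $(\xi,v)$ with $v=\eta-\gamma(\xi)$ transverse to $\Gamma$; then $M_t$ takes the form $m(\xi,v)(e^{itv\tilde\phi}-1)/(iv\tilde\phi)$ with $\tilde\phi$ nonvanishing, and the $v$-kernel $(e^{itv}-1)/v$ is the Fourier-side of a truncated Hilbert transform in $\eta$. Its classical $L^2\to L^\infty_{\mathrm{loc}}$ logarithmic bound, integrated against $\hat f,\hat g$ via Cauchy--Schwarz in the regular variable $\xi$, yields $\|T_{M_t}(f,g)\|_{L^\infty}\lesssim \log t\,\|f\|_2\|g\|_2$; interpolation then produces the $t^{1/q}$ rate for $q<\infty$. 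For $\Gamma$ of non-vanishing curvature, a Van der Corput estimate applied to the $(\xi,\eta)$-integration (exploiting the curvature) saves an extra $t^{-1/4}$ at the $L^\infty$ endpoint, hence $\|T_{M_t}(f,g)\|_\infty\lesssim t^{1/4}\|f\|_2\|g\|_2$, interpolating to $t^{1/4+1/(2q)}$.

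The main obstacle is the logarithmic $L^\infty$ bound in the non-characteristic-curve case: the generic Plancherel/Cauchy--Schwarz estimate only gives $t^{1/2}$, and the sharper $\log t$ rate genuinely requires the truncated-Hilbert-transform interpretation of $M_t$ in the transverse coordinate. The curvature improvement similarly needs a stationary-phase refinement beyond the naive $L^2$ estimate. Everything else is a matter of direct integration on the regions $|\phi|\lessgtr 1/t$ and standard interpolation, once the two endpoint bounds are set up.
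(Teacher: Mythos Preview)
Your framework is right in spirit and matches the paper's for the easy rows. Writing $u(t)=T_\sigma(f,g)$ with $\sigma=e^{ita(\xi+\eta)}M_t$ (so that the unimodular factor is harmless at the level of symbol bounds), the paper's Lemma~\ref{cormoran} is exactly your pair of endpoint inequalities, and for the rows ``general'', ``$\Gamma=\emptyset$'', ``$\Gamma$ a point'', and ``general curve'' your direct computation of $\|M_t\|_{L^2(\R^2)}$ and $\sup_\xi\|M_t(\xi,\cdot)\|_{L^2}$ reproduces the paper's bounds.

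There is, however, a genuine gap in the two geometric rows. For the non-characteristic curve, your $L^\infty$ refinement is not a proof: the transverse kernel is $(e^{itv\tilde\phi(\xi,v)}-1)/(v\tilde\phi(\xi,v))$ with $\tilde\phi$ depending on both variables, so it is not a truncated Hilbert transform in any variable, and no ``classical $L^2\to L^\infty_{\mathrm{loc}}$ logarithmic bound'' applies as stated. The paper's mechanism (Lemma~\ref{supercormoran}, relying on~\cite{BG2}) is different and essentially geometric: a non-characteristic curve has the property that $\Gamma_\epsilon$ can be covered by $\epsilon\times\epsilon$ boxes whose projections onto \emph{both} the $\xi$- and the $\eta$-axis are almost disjoint, which upgrades the crude $\epsilon^{1/2}$ Cauchy--Schwarz bound to $\epsilon$ at the $L^\infty$ endpoint. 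Your argument uses orthogonality in only one direction. The curvature case is similar: your ``Van der Corput'' invocation is not pinned to any concrete oscillatory integral, whereas the paper again uses the thin-support Lemma~\ref{supercormoran}, where curvature yields the intermediate exponent $\epsilon^{3/4}$.

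There is also a secondary problem with your interpolation arithmetic in the non-characteristic case. Even granting endpoints $\langle\log t\rangle$ at $L^\infty$ and $t^{1/2}$ at $L^2$, complex interpolation yields $t^{1/q}(\log t)^{1-2/q}$, not $t^{1/q}$. The paper avoids this loss by decomposing dyadically in $|\phi|$ \emph{before} interpolating: each piece $m_j$ is a bounded symbol on $\Gamma_{2^j/t}$, Lemma~\ref{supercormoran} gives $\|T_{m_j}\|_{L^2\times L^2\to L^q}\lesssim t^{1/q}2^{-j/q}$, and the sum over $j$ converges for $q<\infty$ while producing $\log t$ only at $q=\infty$. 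This dyadic-first organization is the structural difference between your approach and the paper's, and it is what makes the sharp $t^{1/q}$ attainable.
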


In the two first situations above, the bound can be improved if the unitary groups $e^{ita(D)}$, $e^{itb(D)}$ and $e^{itc(D)}$ give decay.

In this setting, using precisely the structure of product of two linear solutions (which cannot be described by only using the set $\phi^{-1}(\{0\})$ as previously), we get the following improvment.

\begin{thm} \label{main2} Assume that $m$ is smooth and compactly supported, and that $(H)$ holds. Then the solution $u$ of~(\ref{eq:dispersive}) satisfies for 
all $q\in[2,\infty]$ 
$$ \left\| u(t) \right\|_{L^q} \lesssim t^{1/2+\frac{1}{2q}} \|f\|_{L^2} \|g\|_{L^2}.$$
If moreover, we assume that $\Gamma=\emptyset$, then for $p,q\in[2,\infty)$ with 
$$\frac{1}{p}+\frac{1}{q} > \frac{1}{2},$$
we get
\begin{equation}
\label{mouette}
\left\|u(t)\right\|_{L^p_t L^q} \lesssim \|f\|_{L^2} \|g\|_{L^2}.
\end{equation}
\end{thm}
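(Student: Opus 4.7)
The proof naturally splits into two parts matching the two assertions. For the uniform-in-space bound $\|u(t)\|_{L^q}\lesssim t^{1/2+1/(2q)}\|f\|_{L^2}\|g\|_{L^2}$, I would establish the endpoints $q=2$ and $q=\infty$ separately and interpolate by Riesz--Thorin. Start from the Duhamel representation
\[
u(t)=-i\int_0^t e^{i(t-s)a(D)}\,T_m\bigl(e^{isb(D)}f,\,e^{isc(D)}g\bigr)\,ds.
\]
For the $L^\infty$ endpoint, combine the linear dispersive estimate $\|e^{i(t-s)a(D)}F\|_{L^\infty}\lesssim |t-s|^{-1/2}\|F\|_{L^1}$ (available under $(H)$) with the Cauchy--Schwarz bound $\|T_m(v,w)\|_{L^1}\lesssim \|v\|_{L^2}\|w\|_{L^2}$ (which holds since $m$ is smooth and compactly supported); integrating in $s$ gives $\|u(t)\|_{L^\infty}\lesssim t^{1/2}\|f\|_{L^2}\|g\|_{L^2}$. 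For the $L^2$ endpoint, use unitarity of $e^{i(t-s)a(D)}$ on $L^2$, the H\"older-type estimate $\|T_m(v,w)\|_{L^2}\lesssim \|v\|_{L^2}\|w\|_{L^\infty}$, and then H\"older in time combined with the $(4,\infty)$-Strichartz inequality $\|w\|_{L^4_s L^\infty_x}\lesssim \|g\|_{L^2}$, producing $\|u(t)\|_{L^2}\lesssim t^{3/4}\|f\|_{L^2}\|g\|_{L^2}$. Interpolation then yields $\|u(t)\|_{L^q}\lesssim t^{3/(2q)+(1-2/q)/2}= t^{1/2+1/(2q)}$, as required.

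For the second estimate (under $\Gamma=\emptyset$), the plan is to reduce the problem to linear Strichartz by using the exact identity already appearing in the proof of Theorem~\ref{albatros1}: since $\phi$ does not vanish on $\operatorname{Supp}m$, the symbol $m/\phi$ is smooth and compactly supported, and an integration by parts in time gives
\[
u(t)=-T_{m/\phi}(v(t),w(t))+e^{ita(D)}T_{m/\phi}(f,g).
\]
The linear piece is controlled by applying 1-D Strichartz to the datum $T_{m/\phi}(f,g)$, whose $L^2$-norm is bounded by $\|f\|_{L^2}\|g\|_{L^2}$ (Cauchy--Schwarz gives $L^1$ control of the bilinear expression, and Bernstein upgrades this to $L^2$ since $m/\phi$ enforces a uniform compact frequency support on the output, which is preserved by $e^{ita(D)}$).

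The bilinear piece $T_{m/\phi}(v,w)$ is estimated by viewing $T_{m/\phi}$ as a bilinear multiplier with smooth, compactly supported symbol (bounded on products of Lebesgue spaces, cf.\ Appendix~\ref{Ame}) and writing
\[
\|T_{m/\phi}(v,w)\|_{L^p_tL^q_x}\lesssim \|v\|_{L^{p_1}_tL^{q_1}_x}\|w\|_{L^{p_2}_tL^{q_2}_x},
\]
with $1/p=1/p_1+1/p_2$ and $1/q=1/q_1+1/q_2$. Each factor is then controlled by 1-D Strichartz for $e^{isb(D)}$ and $e^{isc(D)}$, interpolated with the energy bound $L^\infty_tL^2_x$, and upgraded via Bernstein since the compact support of $m/\phi$ lets us replace $v,w$ by their frequency-localized versions without loss. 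The main obstacle, and the heart of the proof, is to verify that this combined Strichartz--H\"older--Bernstein machinery actually covers the entire open region $\{(p,q)\in[2,\infty)^2:1/p+1/q>1/2\}$ claimed by the theorem; this reduces to a delicate bookkeeping exercise of Minkowski-summing the admissible one-particle vectors $(1/p_i,1/q_i)$ in $1$-D, paying particular attention to the non-endpoint Keel--Tao range and to how Bernstein allows us to trade the spatial integrability for the temporal one on band-limited outputs.
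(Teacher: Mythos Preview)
Your approach mirrors the paper's almost exactly. Part~1 is identical: the paper also establishes the $L^\infty$ and $L^2$ endpoints precisely as you describe (dispersive plus $L^2\times L^2\to L^1$ for $q=\infty$; unitarity plus $L^2\times L^\infty\to L^2$ plus the $(4,\infty)$ Strichartz estimate for $q=2$) and then interpolates. Part~2 uses the same integration-by-parts-in-time decomposition into the bilinear piece $T_{m/\phi}(v(t),w(t))$ and the linear piece $e^{ita(D)}T_{m/\phi}(f,g)$, and the same toolkit of bilinear multiplier bounds, Strichartz, and Bernstein.

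Two small remarks on Part~2. First, the paper does not leave the exponent bookkeeping for the bilinear piece open: it makes the concrete choice
\[
\|T_{m/\phi}(v,w)\|_{L^p_tL^q_x}\lesssim \|v\|_{L^{2p}_tL^q_x}\|w\|_{L^{2p}_tL^q_x},
\]
using the $L^q\times L^q\to L^q$ boundedness of $T_{m/\phi}$ (valid since the symbol is smooth and compactly supported). Each factor is then controlled by the admissible 1-D Strichartz pair $(2p,q_0)$ with $1/p+1/q_0=1/2$, followed by Bernstein in $x$ to descend from $L^{q_0}$ to $L^q$, which requires exactly $1/p+1/q\ge 1/2$. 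Second, for the linear piece the paper uses the \emph{dispersive} estimate together with Bernstein, not Strichartz. This matters: in one dimension the Strichartz pair $(p,q_0)$ for the group $e^{ita(D)}$ requires $p\ge 4$, so your Strichartz route for this term does not reach $p\in(2,4)$; the pointwise $t^{-1/2}$ dispersive decay (combined with the uniform bound for small $t$ coming from compact frequency support) places the linear piece in $L^p_t$ for all $p>2$. That is the one spot where your sketch needs a small adjustment.
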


\begin{rem}
The last statement of the previous theorem gives decay in an integrated form (belonging of $u$ to some $L^p L^q$), as opposed to the pointwise in time rate of decay obtained earlier; of course, this has to do with the use of Strichartz estimates. Heuristically, (\ref{mouette}) can be understood as giving the rate of decay $\|u(t)\|_{L^q} \lesssim t^{\frac{1}{q}-\frac{1}{2}} \|f\|_{L^2} \|g\|_{L^2}$.
\end{rem}

\bigskip
Then, if the smooth symbol $m$ has not a bounded support, we have the following result

\begin{cor} \label{cor} We want to track the dependence of the bounds in the above theorem on the size of the support of $m$. So
assume that $m$ is bounded by $1$ along with sufficiently many of its derivatives, and that it is supported on $B(0,R)$. Then all the previous 
boundedness results hold with an extra factor $R$.
\end{cor}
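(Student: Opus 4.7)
The strategy is to reduce to the unit-support case via a smooth partition of unity on the frequency plane, then sum the pieces using Cauchy--Schwarz and $L^2$ almost-orthogonality. Fix once and for all a smooth partition of unity $1 = \sum_{j \in \mathbb{Z}^2} \chi_j$ on $\mathbb{R}^2$ with each $\chi_j$ supported on a unit ball centered at $j$, and decompose $m = \sum_{j \in J} m_j$ with $m_j := m \chi_j$. Since $\operatorname{Supp} m \subset B(0,R)$, only $|J| \lesssim R^2$ pieces are nonzero, and each $m_j$ is uniformly bounded in $C^N$ on its unit-size support. Let $u_j$ denote the solution of~\eqref{eq:dispersive} with $m_j$ in place of $m$, so that $u = \sum_j u_j$.

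For each $j$, because $m_j$ is supported in a unit ball centered at some $(j_1,j_2)$ in the $(\xi-\eta,\eta)$ plane, one has $T_{m_j}(f,g) = T_{m_j}(P^{(1)}_{j_1} f, P^{(2)}_{j_2} g)$ where $P^{(1)}_{j_1}, P^{(2)}_{j_2}$ are smooth frequency projections onto unit neighborhoods of $j_1$ and $j_2$. Applying Theorem~\ref{mainth} or Theorem~\ref{main2} to $u_j$ (whose symbol has unit support and bounded $C^N$ norm with constants independent of $R$) yields
$$ \|u_j(t)\|_{L^q} \lesssim \alpha(t) \, \|P^{(1)}_{j_1} f\|_{L^2} \, \|P^{(2)}_{j_2} g\|_{L^2}. $$

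I would then handle the two endpoints $q=\infty$ and $q=2$ separately and interpolate. For $L^\infty$, the triangle inequality and Cauchy--Schwarz in $j$ reduce matters to estimating $\sum_j \|P^{(1)}_{j_1} f\|_{L^2}^2$; since for each $j_1$ at most $O(R)$ values of $j_2$ appear (because $\operatorname{Supp} m \subset B(0,R)$), this sum is $\lesssim R \|f\|_{L^2}^2$, giving an overall factor $R$. For $L^2$, grouping the $u_j$'s according to $k := j_1+j_2$ gives $u = \sum_k u^{(k)}$ with essentially-disjoint spatial Fourier supports (each inside a unit neighborhood of $k$), so Plancherel reduces the estimate to summing $\sum_k \|u^{(k)}\|_{L^2}^2$; within each group there are $O(R)$ indices, so Cauchy--Schwarz inside the group and summing over $j_1,j_2$ loses only $R^{1/2}$. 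Bilinear Riesz--Thorin interpolation then yields $\|u(t)\|_{L^q} \lesssim R^{1-1/q}\alpha(t) \|f\|_{L^2}\|g\|_{L^2} \leq R\,\alpha(t)\|f\|_{L^2}\|g\|_{L^2}$ for every $q \in [2,\infty]$.

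The same decomposition delivers the $L^p L^q$ Strichartz-type bound in Theorem~\ref{main2}: apply the estimate piecewise to each $u_j$ and combine by Cauchy--Schwarz in $j$ (for the $q=\infty$ endpoint) and by $L^2$ almost-orthogonality (for $q=2$), then interpolate in $q$. The main point requiring care is the $L^2$ almost-orthogonality step: since there are $O(R^2)$ pieces, a naive triangle inequality would lose $R^2$, and it is only because the output frequency $\xi = \xi_1+\xi_2$ is one-dimensional that the $O(R)$ pairs $(j_1,j_2)$ with common sum $k$ can be grouped and summed almost-orthogonally, producing the clean extra factor $R$ rather than $R^2$.
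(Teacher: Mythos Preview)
Your approach is essentially the same as the paper's: decompose $m$ via a smooth partition of unity into pieces $m_{k,l}$ supported on unit squares, observe that $T_{m_{k,l}}(f,g)=T_{m_{k,l}}(\pi_k f,\pi_l g)$ for suitable frequency projections, apply the unit-support bound to each piece, and sum using Cauchy--Schwarz together with the almost-orthogonality $\sum_k \|\pi_k f\|_{L^2}^2 \lesssim \|f\|_{L^2}^2$. Your $L^\infty$ argument is exactly the paper's argument, and the paper applies it verbatim for \emph{every} target space $\mathcal{B}$ (any $L^q$, or $L^p_t L^q$): triangle inequality plus Cauchy--Schwarz on the double sum of $O(R^2)$ terms gives the factor $(\sum_{k,l}1)^{1/2}\sim R$ directly, with no interpolation needed.

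Your extra $L^2$ step (grouping by output frequency $k=j_1+j_2$ and using Plancherel orthogonality to get $R^{1/2}$) is a genuine refinement over the paper, yielding the sharper factor $R^{1-1/q}$ after interpolation. For the corollary as stated this is unnecessary, and it introduces a small wrinkle: interpolating between the $q=2$ and $q=\infty$ endpoints produces $\alpha_2(t)^{2/q}\alpha_\infty(t)^{1-2/q}$ rather than $\alpha_q(t)$, and in the non-characteristic curve case of Theorem~\ref{mainth} (where $\alpha_\infty(t)=\langle\log t\rangle$ but $\alpha_q(t)=t^{1/q}$ for $q<\infty$) this picks up a spurious factor $\langle\log t\rangle^{1-2/q}$. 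The paper sidesteps this by summing directly in each $L^q$ without interpolation; your own $L^\infty$ argument, applied at every $q$, does the same.
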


\begin{proof} In Theorem \ref{mainth} or \ref{main2}, we have obtained boundedness from $L^2\times L^2$ to ${\mathcal B}$ (where ${\mathcal B}=L^q$ or $L^p_T L^q$) of the operator $T_t= T_\sigma$
with the symbol
$$ \sigma(\xi,\eta) \overset{def}{=} e^{ita(\xi+\eta)} \frac{e^{it\phi(\xi,\eta)}-1}{\phi(\xi,\eta)} m(\xi,\eta), $$ 
when $m$ has a bounded support. \\
So now, considering a smooth symbol $m$ supported on $B(0,R)$, we split it (using a smooth partition of the unity) as
$$ m = \sum_{k,l} m_{k,l}$$
with $m_{k,l}$ smooth symbols supported on $[k-1,k+1] \times [l-1,l+1]$.
Applying the previous results (invariant by modulation), we get 
$$ \| T_\sigma\|_{\mathcal B} \leq \sum_{k,l} \|T_{\sigma_{k,l}}\|_{\mathcal B} \leq c_{\mathcal B} \sum_{k,l} \|\pi_{k} f \|_{L^2} \|\pi_{l}g \|_{L^2},$$
where $c_{\mathcal B}$ is the constant previously obtained for compactly supported symbols and $\pi_{k} f$ is a smooth truncation of $f$ for frequencies around $[k-1,k+1]$.
Using orthogonality, it comes that
$$ \| T_\sigma\|_{\mathcal B} \leq c_{\mathcal B} \left(\sum_{k,l} 1 \right)^{1/2}  \| f \|_{L^2} \|g \|_{L^2},$$
which gives the desired results since $k,l\in\{-R-1,...,R+1\}$.
 \end{proof}

\subsection{Proof of Theorem~\ref{mainth}: the general case}

Using no properties on $\Gamma$ or $a$, $b$, $c$, we can get the following general bound:

\begin{lem} \label{lem:direct} Assume that $m$ is compactly supported, then for all $q\in[2,\infty]$, the solution $u$  to~(\ref{eq:dispersive}) satisfies
$$ \left\| u(t) \right\|_{L^q} \lesssim t\|f\|_{L^2} \|g\|_{L^2}.$$
\end{lem}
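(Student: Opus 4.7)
The plan is to start from Duhamel's formula,
$$u(t) \;=\; -i\int_0^t e^{i(t-s)a(D)} T_m\bigl(e^{isb(D)} f,\,e^{isc(D)} g\bigr)\,ds,$$
take the $L^q$ norm, and apply Minkowski, so that the whole statement reduces to a single estimate, uniform in $s\in[0,t]$:
$$\bigl\| e^{i(t-s)a(D)} T_m\bigl(e^{isb(D)} f,\,e^{isc(D)} g\bigr) \bigr\|_{L^q} \;\lesssim\; \|f\|_{L^2}\|g\|_{L^2}.$$
The factor $t$ in the conclusion will then appear simply as the length of the time integration interval.

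The key observation is that since $m$ is compactly supported, $T_m(h_1,h_2)$ has Fourier transform supported in a \emph{fixed} compact set (the image of $\operatorname{Supp} m$ under $(\xi,\eta)\mapsto\xi+\eta$), independent of the inputs, and each of the linear propagators $e^{is\tau(D)}$ preserves this spectral localization. The integrand above is therefore band-limited in a fixed frequency ball, and Bernstein's inequality gives $\|\cdot\|_{L^q}\lesssim\|\cdot\|_{L^2}$ for every $q\in[2,\infty]$. So it suffices to control the $L^2$ norm of the integrand. Since $e^{i(t-s)a(D)}$, $e^{isb(D)}$, $e^{isc(D)}$ are all $L^2$-isometries, this in turn reduces to the bilinear bound
$$\|T_m(h_1,h_2)\|_{L^2}\;\lesssim\;\|h_1\|_{L^2}\|h_2\|_{L^2}.$$

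For this last inequality I would write
$$\widehat{T_m(h_1,h_2)}(\zeta)\;=\;\int m(\zeta-\eta,\eta)\,\widehat{h_1}(\zeta-\eta)\,\widehat{h_2}(\eta)\,d\eta,$$
apply Cauchy--Schwarz in $\eta$ (using that the $\eta$-slice of $\operatorname{Supp} m$ has bounded length since $m$ is compactly supported), and conclude by Fubini and Plancherel. There is no real obstacle in the argument: no cancellation from the oscillating factor $e^{is\phi}$ is being exploited, which is precisely why this bound plays the role of the ``general case'' placeholder in Theorem~\ref{mainth}, to be improved upon under further geometric assumptions on $\{\phi=0\}$.
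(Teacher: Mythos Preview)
Your proof is correct and follows essentially the same route as the paper: both reduce to the elementary bilinear estimate $\|T_\sigma\|_{L^2\times L^2\to L^q}\lesssim 1$ for a bounded, compactly supported symbol (this is Lemma~\ref{cormoran} in the paper, whose proof is exactly the Cauchy--Schwarz/Bernstein argument you sketch). The only cosmetic difference is that the paper first integrates in $s$ to obtain a single symbol $\sigma = e^{ita(\xi+\eta)}\tfrac{e^{it\phi}-1}{\phi}\,m$ bounded pointwise by $t$, whereas you apply Minkowski and bound each time slice by a constant; the outcome is identical.
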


\begin{proof} The solution $u(t)$ is given by
\begin{align*}
 u(t)=T_t(f,g)(x) & = \int_{\R^2} e^{ix(\xi+\eta)} e^{ita(\xi+\eta)} \frac{e^{it\phi(\xi,\eta)}-1}{\phi(\xi,\eta)} m(\xi,\eta) \widehat{f}(\xi) \widehat{g}(\eta)\, d\xi d\eta \\
 & = T_\sigma(f,g)(x),
\end{align*}
with the symbol
$$ \sigma(\xi,\eta) \overset{def}{=} e^{ita(\xi+\eta)} \frac{e^{it\phi(\xi,\eta)}-1}{\phi(\xi,\eta)} m(\xi,\eta). $$
In this general setting, we only know that $\sigma$ is bounded by $t$ and compactly supported. Lemma \ref{cormoran} implies 
$$ \|T_t(f,g)\|_{L^q} \lesssim t \|f\|_{L^2} \|g\|_{L^2}.$$
\end{proof}

The aim of the following subsections is to improve on this bound under two different kinds of assumptions
\begin{itemize}
\item On the one hand, using geometric properties of the resonance set $\Gamma$
\item and on the other hand, by assuming linear Strichartz inequalities for the unitary groups $e^{ita(D)}$, $e^{itb(D)}$ and $e^{itc(D)}$ and using the structure of product of two linear solutions.
\end{itemize}

\subsection{Proof of Theorems~\ref{mainth} and \ref{main2}: the case without resonances}

In other words: we assume here that the phase function $\phi$ does not vanish.

\begin{prop} \label{prop:cas1} Assume that $\Gamma=\emptyset$ and that $m$ is compactly supported, then for $q\in[2,\infty]$ the solution $u$ of~(\ref{eq:dispersive}) satisfies
\be{eq:1} \left\| u \right\|_{L^q} \lesssim \|f\|_{L^2} \|g\|_{L^2}. \ee
\end{prop}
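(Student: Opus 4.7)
The proof is essentially a direct consequence of the decomposition already exploited in Theorem~\ref{albatros1}. Since $\phi$ does not vanish on $\operatorname{Supp} m$, the quotient $m/\phi$ is a \emph{smooth, compactly supported} symbol, which I will use as the basic building block.

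The plan is as follows. First, integrating $e^{is\phi}$ in $s$ from $0$ to $t$ and factoring, one obtains the exact identity
\[
u(t) \;=\; -\,T_{m/\phi}\bigl(e^{itb(D)}f,\, e^{itc(D)}g\bigr) \;+\; e^{ita(D)}\,T_{m/\phi}(f,g),
\]
exactly as in the proof of Theorem~\ref{albatros1}. This is the only structural input needed; the remainder of the argument reduces each piece to a bilinear multiplier of $L^2\times L^2 \to L^q$ type with a $t$-independent bound.

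Next I would rewrite each term on the right as $T_\sigma(f,g)$ for a symbol $\sigma$ that is bounded (with modulus controlled by $\|m/\phi\|_{L^\infty}$) and compactly supported inside $\operatorname{Supp} m$. Indeed, plugging in the Fourier side of $e^{itb(D)}$ and $e^{itc(D)}$ absorbs the unitary factors into the symbol, giving the first term as $T_{\sigma_1}(f,g)$ with
\[
\sigma_1(\xi,\eta) = e^{itb(\xi)}\,e^{itc(\eta)}\,\frac{m(\xi,\eta)}{\phi(\xi,\eta)},
\]
and similarly the second term is $T_{\sigma_2}(f,g)$ with
\[
\sigma_2(\xi,\eta) = e^{ita(\xi+\eta)}\,\frac{m(\xi,\eta)}{\phi(\xi,\eta)}.
\]
Both symbols satisfy $|\sigma_j| \le \|m/\phi\|_{L^\infty}$ uniformly in $t$, and both are supported in the fixed compact set $\operatorname{Supp} m$.

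Finally, Lemma~\ref{cormoran}, which gives boundedness from $L^2\times L^2$ to $L^q$ for bilinear multipliers with bounded, compactly supported symbols with constant proportional to $\|\sigma\|_{L^\infty}$, yields
\[
\|T_{\sigma_j}(f,g)\|_{L^q} \;\lesssim\; \|\sigma_j\|_{L^\infty}\,\|f\|_{L^2}\|g\|_{L^2} \;\lesssim\; \|f\|_{L^2}\|g\|_{L^2},
\]
for $j=1,2$ and every $q\in[2,\infty]$, and summing the two contributions closes the estimate~(\ref{eq:1}). There is no genuine obstacle here: the nontrivial work was already done in Theorem~\ref{albatros1}, the only point to check being that the $t$-dependence disappears once the unitary factors are absorbed into a bounded, compactly supported symbol, which is immediate.
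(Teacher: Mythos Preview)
Your proof is correct and essentially the same as the paper's, only organized slightly differently. The paper is marginally more direct: instead of splitting via the identity from Theorem~\ref{albatros1} into two pieces $T_{\sigma_1}$ and $T_{\sigma_2}$, it writes $u(t)=T_\sigma(f,g)$ with the single symbol
\[
\sigma(\xi,\eta)=e^{ita(\xi+\eta)}\,\frac{e^{it\phi(\xi,\eta)}-1}{\phi(\xi,\eta)}\,m(\xi,\eta),
\]
observes that $|\sigma|\le 2\|m/\phi\|_{L^\infty}$ uniformly in $t$ since $\phi$ is bounded away from zero on $\operatorname{Supp} m$, and applies Lemma~\ref{cormoran} once. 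Your two symbols $\sigma_1,\sigma_2$ are exactly the two summands of this $\sigma$, so the content is identical; the splitting you use does appear in the paper, but only later in the proof of Theorem~\ref{main2} where the two pieces need to be treated differently.
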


\begin{proof}
The solution $u(t)$ is given by
\begin{align*}
 T_t(f,g)(x) & = \int_{\R^2} e^{ix(\xi+\eta)} e^{ita(\xi+\eta)} \frac{e^{it\phi(\xi,\eta)}-1}{\phi(\xi,\eta)} m(\xi,\eta) \widehat{f}(\xi) \widehat{g}(\eta) d\xi d\eta \\
 & = T_\sigma(f,g)(x),
\end{align*}
with the symbol
$$ \sigma(\xi,\eta) \overset{def}{=} e^{ita(\xi+\eta)} \frac{e^{it\phi(\xi,\eta)}-1}{\phi(\xi,\eta)} m(\xi,\eta). $$
Since $a$ is real-valued and $\phi$ is non-vanishing, then $\sigma$ is bounded by a constant and compactly supported. Lemma~\ref{cormoran} yields 
$$ \|T_t(f,g)\|_{L^q} \lesssim \|f\|_{L^2} \|g\|_{L^2}.$$
\end{proof}

Let us now deal with the improved bounds of Theorem \ref{main2} (using dispersive and Strichartz estimates on the linear evolution groups):

\begin{proof}[Proof of Theorem \ref{main2}]
Let us check the first claim. For every $s\in(0,t)$, we use the dispersive inequality (\ref{eq:dispersive}) and the $L^2\times L^2 \rightarrow L^1$ boundedness of $T_m$ to get
\begin{align*}
\left\| e^{ita(D)} T_{m e^{is\phi}} (f,g) \right\|_{L^\infty} & = \left\|e^{i(t-s)a(D)} T_{m} \left(e^{isb(D)}f, e^{isc(D)}g\right) \right\|_{L ^\infty} \\
& \lesssim \frac{1}{\sqrt{t-s}} \|f\|_{L^2} \|g\|_{L^2}.
\end{align*}
Then, integrating for $s\in(0,t)$ it follows that
$$ \left\| T_t (f,g)\right\|_{L^\infty} \lesssim t^{1/2} \|f\|_{L^2} \|g\|_{L^2}.$$
Similarly using the $L^2\times L^\infty \rightarrow L^2$ boundedness of $T_m$, we have for all $s>0$
\begin{align*}
\left\| e^{ita(D)} T_{m e^{is\phi}} (f,g) \right\|_{L^2} & = \left\| T_{m} \left(e^{isb(D)}f, e^{isc(D)}g\right) \right\|_{L ^2} \\
& \lesssim  \|f\|_{L^2} \|e^{isc(D)}g\|_{L^\infty},
\end{align*}
which yields (using Strichartz inequality)
$$ \left\| T_t (f,g) \right\|_{L^2} \lesssim t^{3/4} \|f\|_{L^2} \|g\|_{L^2}.$$
The proof is concluded by interpolating between $L^2$ and $L^\infty$.

Next, assume that $\Gamma=\emptyset$, which means that $\phi$ is not vanishing on the support of $m$. Computing the integration over $s\in[0,t]$, we can split
\begin{align*}
i T_t(f,g)(x) = I_t(f,g)-II_t(f,g),
\end{align*}
with
\begin{align*}
 I_t(f,g)(x) \overset{def}{=}\int_{\R^2} e^{ix(\xi+\eta)}  e^{it(b(\xi)+c(\eta))} \frac{1}{\phi(\xi,\eta)} m(\xi,\eta) \widehat{f}(\xi) \widehat{g}(\eta) d\xi d\eta
\end{align*}
and 
 \begin{align*}
 II_t(f,g)(x) \overset{def}{=}\int_{\R^2} e^{ix(\xi+\eta)}  e^{ita(\xi+\eta)} \frac{1}{\phi(\xi,\eta)} m(\xi,\eta) \widehat{f}(\xi) \widehat{g}(\eta) d\xi d\eta.
\end{align*}
In other words
$$ I_t(f,g) = T_{m/\phi}(e^{itb(D)} f, e^{itc(D)}g)$$
and
$$ II_t = e^{ita(D)} T_{m/\phi}(f,g).$$
Since $\phi$ is assumed to be smooth and non-vanishing, $m/\phi$ is also smooth and compactly supported so that the bilinear operator $T_{m/\phi}$ is bounded from $L^P \times L^Q$ into $L^R$ 
as soon as $\frac{1}{P} + \frac{1}{Q} \geq \frac{1}{R}$. 

Choose now $p$ and $q$ as in the statement of the theorem. Using the dispersive estimates and Bernstein's inequality (indeed, since $m$ has a compact support,
it is possible to assume that $\widehat{f}$ and $\widehat{g}$ are compactly supported) gives
$$
\left\|I_t(f,g)\right\|_{L^p L^q} \lesssim \left\|T_{m/\phi}(f,g)\right\|_{L^1} \lesssim \|f\|_{L^2} \|g\|_{L^2}.
$$
Therefore, $e^{itb(D)} f$ enjoys the usual Strichartz estimates, as well as, by Bernstein's inequality, the bounds 
$\left\| e^{itb(D)} f \right\|_{L^{2q}}  \lesssim \|f\|_{L^q}$; similarly for $g$. This gives
$$
\left\| II_t(f,g) \right\|_{L^p L^q} \lesssim \left\| e^{itb(D)} f \right\|_{L^{2p} L^{q}} \left\| e^{itc(D)} g \right\|_{L^{2p} L^{q}} \lesssim \|f\|_{L^2} \|g\|_{L^2}.
$$
\end{proof}

\subsection{The case with resonance at only one point}

\begin{prop} \label{prop:cas2} Assume that $\phi$ only vanishes at the point $(\xi_0,\eta_0)$. Assume further that
$\nabla \phi$ also vanishes $(\xi_0,\eta_0)$, but that $\operatorname{Hess} \phi$ has a definite sign at that point. 
Then if $q \in [2,\infty]$, the solution $u$ of~(\ref{eq:dispersive}) satisfies
$$
\left\| u(t) \right\|_{L^q} \lesssim t^{\frac{1}{2}+\frac{1}{2q}} \|f\|_{L^2} \|g\|_{L^2}.
$$
\end{prop}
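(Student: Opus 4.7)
The plan is to represent $u(t) = T_\sigma(f,g)$ with
$$\sigma(\xi,\eta) \overset{def}{=} e^{ita(\xi+\eta)}\frac{e^{it\phi(\xi,\eta)}-1}{\phi(\xi,\eta)} m(\xi,\eta),$$
to prove endpoint bounds at $q=\infty$ and $q=2$, then interpolate. The only pointwise estimate on the symbol that matters is $|\sigma(\xi,\eta)| \lesssim |m(\xi,\eta)|\,\min\bigl(t,\,1/|\phi(\xi,\eta)|\bigr)$, so the strategy at each endpoint is to put $f$ and $g$ into $L^2$ and absorb all the time dependence into an appropriate $L^p$ norm of $\sigma$, which one estimates using the quadratic vanishing of $\phi$ at $(\xi_0,\eta_0)$.

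At $q=\infty$, a direct Cauchy--Schwarz in the frequency representation of $T_\sigma(f,g)(x)$ (a special case of Lemma~\ref{cormoran}) yields $\|T_\sigma(f,g)\|_{L^\infty} \leq \|\sigma\|_{L^2(\R^2)}\|f\|_{L^2}\|g\|_{L^2}$. Since $\operatorname{Hess}\phi(\xi_0,\eta_0)$ is definite, $|\phi(\xi,\eta)| \gtrsim (\xi-\xi_0)^2+(\eta-\eta_0)^2$ on a neighborhood, and $|\phi|$ is bounded below on the rest of $\operatorname{Supp} m$. Passing to polar coordinates centered at $(\xi_0,\eta_0)$,
$$\|\sigma\|_{L^2}^2 \lesssim \int_0^{1/\sqrt{t}} t^2\,r\,dr + \int_{1/\sqrt{t}}^R \frac{r}{r^4}\,dr \lesssim t,$$
so $\|u(t)\|_{L^\infty} \lesssim t^{1/2}\|f\|_{L^2}\|g\|_{L^2}$.

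At $q=2$, Plancherel and Cauchy--Schwarz in the one-variable convolution representation of $\widehat{T_\sigma(f,g)}$ give $\|T_\sigma(f,g)\|_{L^2} \leq \|\sigma\|_{L^\infty_\xi L^2_\eta}\|f\|_{L^2}\|g\|_{L^2}$, so I need $\sup_\xi \int|\sigma(\xi,\eta)|^2\,d\eta$. Completing the square in $\eta$ (using definiteness of $\operatorname{Hess}\phi$) one gets $|\phi(\xi,\eta)| \sim (\xi-\xi_0)^2+(\eta-\eta_*(\xi))^2$ locally, so the $\eta$-integral reduces to $\int \min\!\bigl(t^2,\,((\xi-\xi_0)^2+s^2)^{-2}\bigr)\,ds$. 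Splitting according to whether $(\xi-\xi_0)^2$ is below or above $1/t$, one checks that in both regimes the integral is bounded by $Ct^{3/2}$, with the maximum attained at $|\xi-\xi_0|\sim 1/\sqrt{t}$. Hence $\|\sigma\|_{L^\infty_\xi L^2_\eta}\lesssim t^{3/4}$ and $\|u(t)\|_{L^2} \lesssim t^{3/4}\|f\|_{L^2}\|g\|_{L^2}$.

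Riesz--Thorin interpolation between the two endpoint bounds yields $\|u(t)\|_{L^q}\lesssim t^{1/2+1/(2q)}\|f\|_{L^2}\|g\|_{L^2}$, as claimed. I expect the main technical obstacle to be the $L^2$ endpoint, specifically the verification that $\xi=\xi_0$ (rather than some other $\xi$) realizes the dominant contribution to $\|\sigma\|_{L^\infty_\xi L^2_\eta}$: this rests on monitoring the transition between the regions where the $\min$ is realized by $t^2$ and by $1/|\phi|^2$, and on confirming that the ``tail'' in $|\xi-\xi_0|>1/\sqrt{t}$ produces a bound $\sim|\xi-\xi_0|^{-3}$ that is bounded above by $t^{3/2}$.
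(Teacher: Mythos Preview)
Your argument is correct, and it yields exactly the stated bound. The difference from the paper's proof is in how the symbol estimate is packaged.

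The paper does not compute mixed Lebesgue norms of the full symbol $\sigma$. Instead it performs a dyadic decomposition in the distance to $(\xi_0,\eta_0)$: writing $1 = \chi(\sqrt{t}\,\cdot) + \sum_{j\geq 1}\psi(2^{-j}\sqrt{t}\,\cdot)$, each resulting piece $m_j$ is supported on a ball of radius $\sim 2^j/\sqrt{t}$ with $\|m_j\|_{L^\infty}\lesssim t\,2^{-2j}$. Lemma~\ref{cormoran} (whose proof is itself the Cauchy--Schwarz/Plancherel/interpolation argument you use, but applied to a symbol that is merely bounded and compactly supported) then gives $\|T_{m_j}\|_{L^2\times L^2\to L^q}\lesssim t^{1/2+1/(2q)}2^{-j(1+1/q)}$, and the sum in $j$ converges.

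Your route computes $\|\sigma\|_{L^2(\R^2)}$ and $\|\sigma\|_{L^\infty_\xi L^2_\eta}$ directly via polar coordinates and a one-variable computation, then interpolates once at the end. This is a genuine but mild variation: you effectively perform the radial integral that the dyadic sum encodes, and you do the interpolation once globally rather than once per shell inside Lemma~\ref{cormoran}. The paper's decomposition is more modular---the same template (decompose, invoke Lemma~\ref{cormoran} or Lemma~\ref{supercormoran} on each shell, sum) is reused for the curve case in Proposition~\ref{prop:cas3} and in Section~\ref{ld}---whereas your approach is slightly more economical here but would need to be redone from scratch for the curve geometry. Your worry about the $L^2$ endpoint is unfounded: both regimes $|\xi-\xi_0|\lessgtr 1/\sqrt{t}$ give exactly the $t^{3/2}$ bound, as your own tail estimate $|\xi-\xi_0|^{-3}\leq t^{3/2}$ already shows.
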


\begin{proof}
Assume for simplicity that $\phi$ vanishes at order 2 at 0. Next take $\chi$ a smooth, compactly supported function, equal to $1$ on $B(0,1)$, and set $\psi = \chi(\cdot/2) - \chi$ so that
$$
1 = \chi + \sum_{j \geq 1} \psi(2^{-j} \cdot).
$$
Decompose then the symbol
\begin{align*}
e^{ita(\xi)} m \frac{e^{it\phi}-1}{\phi} & = \left[ \chi \left( \sqrt{t} (\xi,\eta) \right) + \sum_{j \geq 1} \psi(2^{-j} \sqrt{t} (\xi,\eta)) \right] e^{ita(\xi)} m \frac{e^{it\phi}-1}{\phi} \\
& \overset{def}{=} m_0(\xi,\eta) + \sum_{j \geq 1} m_j(\xi,\eta).
\end{align*}
Obviously,
$$
T_t = T_{m_0} + \sum_{j\geq 1} T_{m_j},
$$
so it suffices to bound the summands above. The symbol $m_0$ (respectively, $m_j$) is supported on a ball of radius $\sim \frac{1}{\sqrt{t}}$ (respectively, $\sim \frac{2^j}{\sqrt{t}}$), and bounded by $t$ (respectively, $t 2^{-2j}$). It follows by Lemma~\ref{cormoran} that
$$
\left\|T_{m_0}\right\|_{L^2 \times L^2 \rightarrow L^q} \lesssim t^{\frac{1}{2} + \frac{1}{2q}} \;\;\;\;\;\mbox{and}\;\;\;\;\; \left\|T_{m_j}\right\|_{L^2 \times L^2 \rightarrow L^q} \lesssim t^{\frac{1}{2} + \frac{1}{2q}} 2^{j(-1-\frac{1}{q})}.
$$
Therefore,
$$
\left\|T_t\right\|_{L^2 \times L^2 \rightarrow L^q} \lesssim t^{\frac{1}{2} + \frac{1}{2q}} \left[ 1 + \sum_{j \geq 1}  2^{j(-1-\frac{1}{q})}\right] \lesssim t^{\frac{1}{2} + \frac{1}{2q}},
$$
which is the desired result.
\end{proof}

\subsection{The case of resonances along a curve}

\begin{prop} \label{prop:cas3} Assume that $\Gamma$ is a smooth curve, where $\phi$ vanishes at order 1.
Then if $q \in [2,\infty]$, the solution $u$ of~(\ref{eq:dispersive}) satisfies
\begin{itemize}
\item if $\Gamma$ is nowhere caracteristic
$$
\left\| u(t) \right\|_{L^q} \lesssim \|f\|_{L^2} \|g\|_{L^2}
\left\{ \begin{array}{ll} t^{\frac{1}{q}} & \mbox{if $2\leq q < \infty$} \\ \langle \log t \rangle & \mbox{if $q = \infty$} \end{array} \right. \,
$$
\item if $\Gamma$ has non-vanishing curvature
$$
\left\| u(t) \right\|_{L^q} \lesssim t^{\frac{1}{4}+\frac{1}{2q}} \|f\|_{L^2} \|g\|_{L^2},
$$
\item else
$$
\left\| u(t) \right\|_{L^q} \lesssim t^{\frac{1}{2}} \|f\|_{L^2} \|g\|_{L^2}.
$$
\end{itemize}
\end{prop}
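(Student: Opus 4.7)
Write $u(t) = T_\sigma(f,g)$ with
$$
\sigma(\xi,\eta) = e^{ita(\xi+\eta)} m(\xi,\eta) \frac{e^{it\phi(\xi,\eta)}-1}{\phi(\xi,\eta)}.
$$
The factor $(e^{it\phi}-1)/\phi$ has magnitude $\min(t,\,2/|\phi|)$, so $\sigma$ concentrates near the resonance curve $\Gamma = \{\phi=0\}$. The plan is to perform a dyadic decomposition in $|\phi|$ and to bound each piece using the bilinear multiplier estimate Lemma~\ref{cormoran}, summing over the decomposition in a manner that reflects the geometry of $\Gamma$.

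Using a Littlewood--Paley partition applied to the variable $t\phi$, split $\sigma = \sigma_0 + \sum_{j=1}^{J} \sigma_j$ with $J\sim \log_2 t$. Since $\phi$ vanishes at order $1$ on the smooth curve $\Gamma$, the support of $\sigma_0$ is a tube of width $\sim 1/t$ about $\Gamma\cap \operatorname{Supp} m$ with $\|\sigma_0\|_\infty\lesssim t$, while for $1\leq j\leq J$, $\sigma_j$ is supported in a tube of width $\sim 2^j/t$ with $\|\sigma_j\|_\infty \lesssim t/2^j$.

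For each piece, Lemma~\ref{cormoran} controls $\|T_{\sigma_j}\|_{L^2\times L^2\to L^q}$ in terms of $\|\sigma_j\|_\infty$ together with two geometric features of the support: its planar area, and the maximum cross-section length in each of the three \emph{characteristic directions} $\{\xi=\mathrm{const}\}$, $\{\eta=\mathrm{const}\}$, $\{\xi+\eta=\mathrm{const}\}$. The three cases of the proposition correspond precisely to the three possible behaviors of $\Gamma$ relative to these directions:
\begin{itemize}
\item When $\Gamma$ is nowhere characteristic, every cross-section of $\operatorname{Supp}\sigma_j$ has length comparable to the tube width $2^j/t$, and Lemma~\ref{cormoran} yields the sharpest estimate. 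Summing the resulting geometric series in $j$ produces $t^{1/q}$ for finite $q$ (dominated by the outermost dyadic level), while at $q=\infty$ the $\sim\log t$ dyadic levels each contribute a uniform amount, producing the borderline $\langle\log t\rangle$.
\item When $\Gamma$ has non-vanishing curvature, a van der Corput / stationary phase estimate applied to the oscillatory integral along $\Gamma$ (via Appendix~\ref{Aodoi}) provides an extra $t^{-1/2}$ improvement in the bilinear bound; the dyadic series then sums to $t^{1/4+1/(2q)}$.
\item For a general curve, at points where $\Gamma$ is tangent to a characteristic direction, the cross-section of $\operatorname{Supp}\sigma_j$ in that direction can be as long as the entire tube. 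The corresponding bilinear bound is weaker, and summation produces $t^{1/2}$ with no improvement over the general estimate of Lemma~\ref{lem:direct}.
\end{itemize}

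The main obstacle is the $q=\infty$ endpoint of the non-characteristic case, where the naive estimate $\|T_{\sigma_j}\|_{L^2\times L^2\to L^\infty}\lesssim \|\sigma_j\|_{L^2}$ already sums to $\sqrt{t}$ rather than $\langle\log t\rangle$. Recovering the logarithmic bound forces one to exploit the oscillation of $e^{it\phi}$ \emph{across} each dyadic tube: after a local change of variable to $(y,\phi)$ with $y$ parameterizing $\Gamma$, the transverse integral $\int (e^{it\phi}-1)\phi^{-1} A(\phi,y)\,d\phi$ is a Hilbert-transform-type quantity, bounded uniformly in $t$ for smooth amplitudes $A$. Turning this scalar observation into a bilinear operator bound—the amplitude contains the rough $L^2$ factors $\widehat f$, $\widehat g$—is the technical core of the argument, and relies crucially on the one-dimensional oscillatory integral estimates of Appendix~\ref{Aodoi}.
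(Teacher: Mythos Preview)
Your dyadic decomposition in $t\phi$ is exactly the paper's approach, and the bounds $\|\sigma_j\|_\infty \lesssim t2^{-j}$, $\operatorname{Supp}\sigma_j \subset \Gamma_{2^j/t}$ are correct. However, the supporting lemma is not Lemma~\ref{cormoran} (which treats symbols supported on \emph{balls}) but Lemma~\ref{supercormoran}, which directly gives, for a symbol bounded by $1$ and supported on $\Gamma_\epsilon$,
\[
\|T_\sigma\|_{L^2\times L^2\to L^q}\lesssim
\begin{cases}
\epsilon^{1-1/q} & \text{$\Gamma$ nowhere characteristic},\\
\epsilon^{3/4-1/(2q)} & \text{$\Gamma$ has non-vanishing curvature},\\
\epsilon^{1/2} & \text{otherwise}.
\end{cases}
\]
Plugging in $\epsilon=2^j/t$ and the $L^\infty$ bound on $\sigma_j$, then summing over $j$, yields all three statements of the proposition uniformly; the geometric distinctions are already packaged inside Lemma~\ref{supercormoran}.

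This misattribution produces two concrete errors in your sketch. First, the $q=\infty$ non-characteristic endpoint is not an obstacle: Lemma~\ref{supercormoran} gives $\|T_{\sigma_j}\|_{L^2\times L^2\to L^\infty}\lesssim (t2^{-j})(2^j/t)=1$ for each $j$, and since $\sigma_j\equiv 0$ once $2^j/t$ exceeds the diameter of $\operatorname{Supp} m$, there are only $O(\log t)$ nonzero pieces; summing gives $\langle\log t\rangle$ directly. Your proposed Hilbert-transform argument is unnecessary. Second, the curvature case likewise comes straight from Lemma~\ref{supercormoran} (which in turn cites Proposition~6.2 of~\cite{BG2}); Appendix~\ref{Aodoi} concerns the one-dimensional stationary-phase integrals used for the asymptotic equivalents in Section~\ref{ae} and plays no role in this proposition.
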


As explained in Remark \ref{rem:car}, the estimate for a nowhere characteristic curve $\Gamma$ still holds if the only characteristic points are characteristic along the variable $\xi+\eta$.

\begin{proof} We only treat the case where $\Gamma$ is non-characteristic, and $2\leq q <\infty$; the other cases can be obtained by a similar argument.
Similarly to Proposition \ref{prop:cas2}, consider  $\chi$ a smooth, compactly supported function, equal to $1$ on $[0,1]$, and set $\psi = \chi(\cdot/2) - \chi$ 
so that
$$
1 = \chi + \sum_{j \geq 1} \psi(2^{-j} \cdot).
$$
Let denote the distance function $d_\Gamma(\xi,\eta)=d((\xi,\eta),\Gamma)$, since $\Gamma$ is supposed to be a smooth curve and $\nabla \phi$ non-vanishing near $\Gamma$, it follows that
$$ d_\Gamma(\xi,\eta) \simeq |\phi(\xi,\eta)|.$$
Decompose then the symbol
\begin{align*}
e^{ita(\xi)} m(\xi,\eta) \frac{e^{it\phi(\xi,\eta)}-1}{\phi(\xi,\eta)} & = \left[ \chi \left(t \phi(\xi,\eta) \right) + \sum_{j \geq 1} \psi(2^{-j} t \phi(\xi,\eta)) \right] e^{ita(\xi)} m(\xi,\eta) \frac{e^{it\phi(\xi,\eta)}-1}{\phi(\xi,\eta)} \\
& \overset{def}{=} m_0(\xi,\eta) + \sum_{j \geq 1} m_j(\xi,\eta).
\end{align*}
Obviously,
$$
T_t = T_{m_0} + \sum_{j\geq 1} T_{m_j},
$$
so it suffices to bound the summands above. The symbol $m_0$ (respectively, $m_j$) is supported, up to a numerical constant, on a neighborhood $\Gamma_{1/t}$ 
(respectively, $\Gamma_{2^j/t}$), and bounded by $t$ (respectively, $t 2^{-j}$). 
If $\Gamma$ is nowhere caracteristic, it follows by Lemma~\ref{supercormoran} that
$$
\left\|T_{m_0}\right\|_{L^2 \times L^2 \rightarrow L^q} \lesssim t^{\frac{1}{q}} \;\;\;\;\;\mbox{and}\;\;\;\;\; 
\left\|T_{m_j}\right\|_{L^2 \times L^2 \rightarrow L^q} \lesssim t^{\frac{1}{q}} 2^{-\frac{j}{q}}.
$$
Therefore,
$$
\left\|T_t\right\|_{L^2 \times L^2 \rightarrow L^q} \lesssim t^{\frac{1}{q}} \left[ 1 + \sum_{j \geq 1}  2^{-\frac{j}{q}}\right] \lesssim t^{\frac{1}{q}},
$$
which is the desired result.
\end{proof}

\section{Localized data}

\label{ld}

We will now assume that the data belongs to a weighted Sobolev space, and study the decay of the solution of~(\ref{eq:dispersive}).

\subsection{Decay of solutions: the role of time resonances}

\begin{prop} \label{prop:cas2bis} Assume as usual that $m$ is smooth and compactly supported.
Assume next that $\phi$ only vanishes at $(\xi_0,\eta_0)$; that $\nabla \phi$ also vanishes at that point; and that $\operatorname{Hess} \phi$ at that point has a definite sign. 
Then if $q \in [2,\infty]$, the solution $u$ of~(\ref{eq:dispersive}) satisfies 
\begin{itemize}
\item If $0 \leq s < \frac{1}{2}$, $\left\|u(t)\right\|_{L^q} \lesssim t^{\frac{1}{2}+\frac{1}{2q}-s} \|f\|_{L^{2,s}} \|g\|_{L^{2,s}}$.
\item If $s > \frac{1}{2}$ and $q<\infty$, $\left\|u(t)\right\|_{L^q} \lesssim t^{–\frac{1}{2q}} \|f\|_{L^{2,s}} \|g\|_{L^{2,s}}$.
\item If $s > \frac{1}{2}$ and $q=\infty$, $\left\|u(t)\right\|_{L^\infty} \lesssim \langle \log t \rangle \|f\|_{L^{2,s}} \|g\|_{L^{2,s}}$.
\end{itemize}
\end{prop}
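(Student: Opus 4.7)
The plan is to build on the dyadic decomposition strategy of Proposition~\ref{prop:cas2}, combining it with the improved control on $\widehat f,\widehat g$ provided by the weighted assumption $f,g\in L^{2,s}$.

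Write the full symbol
$$\sigma(\xi,\eta)=e^{ita(\xi+\eta)}\,\frac{e^{it\phi(\xi,\eta)}-1}{\phi(\xi,\eta)}\,m(\xi,\eta)$$
as $\sigma=\sigma_0+\sum_{j\geq 1}\sigma_j$, with smooth cutoffs at scale $2^j/\sqrt{t}$ around $(\xi_0,\eta_0)$. Since $\phi$ and $\nabla\phi$ vanish at $(\xi_0,\eta_0)$ while $\operatorname{Hess}\phi$ is sign-definite, $|\phi|\sim|(\xi-\xi_0,\eta-\eta_0)|^2$ near the resonance, so $\sigma_j$ is supported on a disk of radius $\sim 2^j/\sqrt t$ with $\|\sigma_j\|_\infty\lesssim t\,2^{-2j}$, and only the $j\lesssim \tfrac12\log t$ pieces are non-trivial.

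\textbf{Case $0\le s<\tfrac12$.} The key new ingredient is the Sobolev-trace-type inequality
$$\|\widehat f\|_{L^2(B(\xi_0,r))}\lesssim r^s\|f\|_{L^{2,s}}\qquad (r\le 1),$$
which I would obtain by a Littlewood--Paley decomposition of $\widehat f$ around $\xi_0$ (equivalently, splitting $f$ into physical-space dyadic annuli $|x|\sim 2^k$ on which $\|f\|_{L^2}\lesssim 2^{-ks}\|f\|_{L^{2,s}}$). Inserting this into the Cauchy--Schwarz bound
$$|T_{\sigma_j}(f,g)(x)|\le\|\sigma_j\|_\infty\cdot r\cdot\|\widehat f\|_{L^2(B(\xi_0,r))}\|\widehat g\|_{L^2(B(\eta_0,r))}$$
with $r=2^j/\sqrt t$ yields $\|T_{\sigma_j}(f,g)\|_{L^\infty}\lesssim t^{1/2-s}2^{j(2s-1)}\|f\|_{L^{2,s}}\|g\|_{L^{2,s}}$, whose $j$-sum converges because $2s-1<0$. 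A parallel Plancherel-based estimate on the Fourier-side integral $h(\zeta)=\int\sigma_j\widehat f(\xi)\widehat g(\zeta-\xi)d\xi$ (whose support in $\zeta$ has length $\sim r$) gives the matching $L^2$ bound $t^{3/4-s}2^{j(2s-3/2)}$, which sums for $s<\tfrac34$. Riesz--Thorin interpolation between $L^2$ and $L^\infty$ then produces the claimed $L^q$ exponent $1/2+1/(2q)-s$.

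\textbf{Case $s>\tfrac12$.} Here Sobolev embedding gives $\widehat f,\widehat g\in L^\infty$ with $\|\widehat f\|_\infty\lesssim\|f\|_{L^{2,s}}$, and the preceding bound improves to
$$\|T_{\sigma_j}(f,g)\|_{L^\infty}\lesssim\|\sigma_j\|_\infty\cdot r^2\cdot\|\widehat f\|_\infty\|\widehat g\|_\infty\lesssim\|f\|_{L^{2,s}}\|g\|_{L^{2,s}},$$
uniformly in $j$. Summing the $O(\log t)$ nontrivial pieces gives the $\langle\log t\rangle$ bound at $q=\infty$. For $q<\infty$, the additional time-decay would come from extracting the scalar main contribution
$$u(t,x)\sim\widehat f(\xi_0)\widehat g(\eta_0)\,K_t(x)+\text{correction},$$
where $K_t=e^{ita(D)}\mathcal F^{-1}[J_t]$ with $J_t(\zeta)=\int\tfrac{e^{itP(\xi,\zeta)}-1}{P(\xi,\zeta)}\tilde m(\xi,\zeta-\xi)d\xi$ the profile obtained by integrating out the transverse variable (here $P(\xi,\zeta)=\phi(\xi,\zeta-\xi)$), and applying the linear dispersive estimate $\|e^{ita(D)}\cdot\|_{L^q}\lesssim t^{1/q-1/2}\|\cdot\|_{L^{q'}}$ to $\mathcal F^{-1}[J_t]$. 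The correction is controlled via the H\"older modulus $|\widehat f(\xi)-\widehat f(\xi_0)|\lesssim|\xi-\xi_0|^{s-1/2}\|f\|_{L^{2,s}}$, which gains an extra factor $(2^j/\sqrt t)^{s-1/2}$ on each dyadic piece.

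\textbf{Main obstacle.} The delicate point is the $s>\tfrac12$, $q<\infty$ range. A naive dyadic summation using only $\|\widehat f\|_\infty$ produces the wrong sign of $t$-exponent; one must combine Bernstein's inequality (using that $T_{\sigma_j}(f,g)$ has Fourier support of length $\sim 2^j/\sqrt t$) with the dispersive estimate applied to the factor $e^{ita(D)}$ in $\sigma_j$, so that the linear decay $t^{1/q-1/2}$ is paired with the intrinsic bilinear behaviour of $\sigma_j^\flat=\sigma_j/e^{ita(\xi+\eta)}$ to yield net decay. The endpoint $q=\infty$ is logarithmically degenerate, and the endpoint $s=\tfrac12$ requires a $\log$-correction in the trace inequality.
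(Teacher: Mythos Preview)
Your treatment of the range $0\le s<\tfrac12$ is essentially the paper's proof: the dyadic decomposition of the symbol together with the trace bound $\|\widehat f\|_{L^2(B(\xi_0,r))}\lesssim r^s\|f\|_{L^{2,s}}$ is exactly the content of Lemma~\ref{cormoran} (via Lemma~\ref{lem:tron}), and the paper simply invokes that lemma on each piece $m_j$.

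Where your plan diverges is the range $s>\tfrac12$, $q<\infty$, and here you are making things much harder than necessary. You correctly observe that $\widehat f,\widehat g\in L^\infty$, but you then abandon the dyadic scheme and reach for a main-term/correction splitting with dispersive estimates and H\"older moduli. None of this is needed. The point you are missing is that once $s>\tfrac12$, the trace inequality simply saturates:
\[
\|\widehat f\|_{L^2(B(\xi_0,r))}\le r^{1/2}\|\widehat f\|_{L^\infty}\lesssim r^{1/2}\|f\|_{L^{2,s}},
\]
i.e.\ the exponent $s$ in your bound is replaced by $\tfrac12$. Feeding this back into the \emph{same} Cauchy--Schwarz/Plancherel estimates you already wrote down (this is the second clause of Lemma~\ref{cormoran}) gives, with $\epsilon=2^j/\sqrt t$,
\[
\|T_{m_j}(f,g)\|_{L^q}\lesssim (t\,2^{-2j})\,\epsilon^{\,2-\frac1q}
= t^{\frac{1}{2q}}\,2^{-j/q}\,\|f\|_{L^{2,s}}\|g\|_{L^{2,s}}.
\]
For $q<\infty$ the geometric sum in $j$ converges and yields the bound $t^{1/(2q)}$; for $q=\infty$ the terms are $O(1)$ and the $O(\log t)$ many nontrivial scales produce the $\langle\log t\rangle$ factor. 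So the whole proposition follows from a single application of Lemma~\ref{cormoran}, uniformly in $s$; your elaborate machinery for the $s>\tfrac12$, $q<\infty$ case (and the ``main obstacle'' you identify) disappears once you notice that the trace exponent caps at $\tfrac12$.
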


\begin{proof}
As previously done in the proof of Proposition \ref{prop:cas2}, we decompose the symbol, thus giving rise to a decomposition
$$ u(t)=T_{m_0}(f,g) + \sum_{j\geq 1} T_{m_j}(f,g).$$ 
The symbol $m_0$ (resp. $m_j$) is supported on a ball of radius $t^{-\frac{1}{2}}$ (resp. $2^j t^{-\frac{1}{2}}$) and is bounded by $t$ (resp. $t2^{-2j}$). Consequently we conclude with Lemma \ref{cormoran}.
\end{proof}

\begin{thm} \label{thm:timereso} Assume that $\phi$ vanishes at first order along a non-characteristic curve $\Gamma$. 
Then for $2 \leq q < \infty$ and $s \geq 0$, the solution $u$ of~(\ref{eq:dispersive}) satisfies the following estimates:
\begin{itemize}
\item If $0 \leq s < \frac{1}{4}$, $\left\|u(t)\right\|_{L^q} \lesssim t^{\frac{1-4s}{q}} \|f\|_{L^{2,s}} \|g\|_{L^{2,s}}$.
\item If $s > \frac{1}{4}$, $\left\|u(t)\right\|_{L^q} \lesssim \langle \log t \rangle \|f\|_{L^{2,s}} \|g\|_{L^{2,s}}$.
\end{itemize}
If $q = \infty$, the solution $u$ of~(\ref{eq:dispersive}) satisfies
$$
\left\| u(t) \right\|_{L^\infty} \lesssim \langle \log t \rangle \|f\|_{L^2} \|g\|_{L^2}.
$$
\end{thm}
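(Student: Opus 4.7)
The plan is to mimic the proof of Proposition~\ref{prop:cas3}: perform a dyadic decomposition of the symbol with respect to its distance to the resonance curve $\Gamma$, and estimate each piece by a bilinear building block, but replace the unweighted Lemma~\ref{supercormoran} by a weighted variant that exploits the $L^{2,s}$ regularity of the data. With $\chi$ smooth, equal to $1$ on $[0,1]$, and $\psi=\chi(\cdot/2)-\chi$, decompose
$$
e^{ita(\xi)}\,m(\xi,\eta)\,\frac{e^{it\phi(\xi,\eta)}-1}{\phi(\xi,\eta)} = m_0 + \sum_{j\ge 1} m_j,
$$
where $m_j$ is supported in $\Gamma_{\epsilon_j}$ with $\epsilon_j=2^j/t$ and $\|m_j\|_\infty \lesssim t\,2^{-j}$; compactness of $\operatorname{Supp} m$ means only $j \lesssim \log t$ contribute non-trivially.

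The core new ingredient is a weighted analogue of Lemma~\ref{supercormoran}: for any smooth symbol $M$ with $\|M\|_\infty \le N$ supported in $\Gamma_\epsilon$ (with $\Gamma$ non-characteristic), and for $0 \le s < 1/2$ and $2 \le q \le \infty$,
$$
\|T_M(f,g)\|_{L^q} \lesssim N\, \epsilon^{\,1 - 1/q + 4s/q} \,\|f\|_{L^{2,s}}\|g\|_{L^{2,s}}.
$$
The endpoint $q = \infty$ is the unweighted Lemma~\ref{supercormoran} itself (the weight brings no gain at $L^\infty$). At $q = 2$, one applies Plancherel: via the $1$D Sobolev embedding $H^s(\R) \hookrightarrow L^{2/(1-2s)}(\R)$, which is the Fourier-side interpretation of $\|f\|_{L^{2,s}}\simeq \|\widehat f\|_{H^s}$, the restriction of $\widehat f$ to an $\epsilon$-slab transverse to $\Gamma$ is controlled by $\epsilon^s\|f\|_{L^{2,s}}$; non-characteristicity of $\Gamma$ ensures that the analogous gain applies in the $\eta$-direction for $\widehat g$, and the combined transverse gain yields $\epsilon^{2s}$ at $q=2$, matching $\epsilon^{1/2 + 2s}$. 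Complex interpolation between the two endpoints yields the stated bound.

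Applying this to $m_j$ with $\epsilon = 2^j/t$ and $N = t\, 2^{-j}$ gives
$$
\|T_{m_j}(f,g)\|_{L^q} \lesssim t^{(1-4s)/q}\, 2^{-j(1-4s)/q}\, \|f\|_{L^{2,s}}\|g\|_{L^{2,s}}.
$$
For $0 \le s < 1/4$, the ratio $2^{-(1-4s)/q} < 1$ and the geometric sum in $j$ yields the announced $t^{(1-4s)/q}$ bound. For $s > 1/4$ one simply invokes the estimate above at the critical exponent $s=1/4$ (using $\|f\|_{L^{2,1/4}} \lesssim \|f\|_{L^{2,s}}$): each $T_{m_j}$ is then uniformly bounded in $j$ and the $O(\log t)$ scales sum to a $\langle \log t\rangle$ factor. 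The $q=\infty$ statement of the theorem is nothing but the $q=\infty$ case of Proposition~\ref{prop:cas3}, which already delivers a $\langle \log t\rangle$ bound without any weight.

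The main obstacle is the weighted building-block estimate at the endpoint $q=2$ with the sharp exponent $\epsilon^{2s}$: one must convert the weight $\langle x \rangle^s$ on $f$ (respectively $g$) into a transverse Sobolev restriction bound in the $\xi$-direction (respectively $\eta$-direction) that is compatible with the non-characteristic orientation of $\Gamma$, and combine the two gains additively rather than via a loss-producing Hölder. Once that endpoint is in hand, complex interpolation and the geometric summation make everything else routine.
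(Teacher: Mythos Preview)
Your approach is essentially the paper's: the same dyadic decomposition in $|\phi|$, followed by the same weighted building-block estimate (Lemma~\ref{macareux} in the paper), and the same geometric/logarithmic summation. The only point where you remain vague---combining the $\epsilon^s$ gains from $f$ and $g$ at the $L^2$ endpoint---is handled in the paper not by a direct ``additive'' combination but by first proving an asymmetric bound $\|T_\sigma(f,g)\|_{L^2}\lesssim \epsilon^{1/2+2s}\|f\|_{L^{2,2s}}\|g\|_{L^2}$ (taking the full gain from one factor via the Sobolev embedding, which forces $2s<1/2$, i.e.\ $s<1/4$) and then redistributing the weight by bilinear interpolation with the symmetric estimate; this is why the natural range for the building block is $s<1/4$ rather than $s<1/2$.
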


\begin{rem} It is interesting to notice that the $L^\infty$ estimate of Proposition~\ref{prop:cas3} does not improve if the data belong to weighted $L^2$ spaces.
Also, notice that the $L^2$ estimate for $s>1/4$ is already as good as allowed by the lower bound in Section~\ref{subsub:lower}: any further assumption on space resonances
will not improve the estimate.
\end{rem}

\begin{proof} Just like in the proof of Proposition~\ref{prop:cas3}, split the symbol
$$
e^{ita(\xi)} m(\xi,\eta) \frac{e^{it\phi(\xi,\eta)}-1}{\phi(\xi,\eta)} = m_0(\xi,\eta) + \sum_{j \geq 1} m_j(\xi,\eta).
$$
Obviously,
$$
T_t = T_{m_0} + \sum_{j\geq 1} T_{m_j},
$$
so it suffices to bound the summands above. The symbol $m_0$ (respectively, $m_j$) is supported, up to a constant, on a neibourghhood $\Gamma_{1/t}$ (respectively, $\Gamma_{2^j/t}$), and bounded by $t$ (respectively, $t 2^{-2j}$). 
Since $\Gamma$ is nowhere caracteristic, it follows by Lemma~\ref{macareux} that for $s < 1/4$
\begin{equation*}
\begin{split}
& \left\|T_{m_j}\right\|_{L^{2,s} \times L^{2,s} \rightarrow L^q} \lesssim \left( t^{-1} 2^{j} \right)^{1-\frac{1}{q} + \frac{4s}{q}} (t2^{-2j}) \\
& \left\|T_{m_0}\right\|_{L^{2,s} \times L^{2,s} \rightarrow L^q} \lesssim t^{-1+\frac{1}{q} - \frac{4s}{q}} t,
\end{split}
\end{equation*}
with corresponding estimates if $s>1/4$. The proof of the proposition is concluded by summing the above bounds for the elementary operators $T_{m_j}$.
\end{proof}

Following the same reasoning and estimates in \cite{BG2}, it is possible to get similar results for a curve admitting characteristic points with non-vanishing curvature. We do not detail this point here.



\subsection{Decay of solutions: the role of space resonances}

\begin{thm} \label{thm:main3} Assume that $(H)$ holds, and that $\Delta = \emptyset$, or in other words that $\left( \partial_\xi - \partial_\eta \right) \phi$ never vanishes.
Then the solution $u$ of~(\ref{eq:dispersive}) satisfies the following bounds: for any $\delta>0$
\begin{itemize}
\item If $0 \leq s < \frac{1}{q}$, $\|u(t)\|_{L^q} \lesssim t^{\frac{1}{2q} + \frac{1}{2} - \frac{3}{2}s + \delta} \|f\|_{L^{2,s}} \|g\|_{L^{2,s}}$.
\item If $\frac{1}{q} < s < 1-\frac{1}{q}$, $\|u(t)\|_{L^q} \lesssim t^{\frac{1}{2}-s + \delta} \|f\|_{L^{2,s}} \|g\|_{L^{2,s}}$.
\item If $s > 1 - \frac{1}{q}$, $\|u(t)\|_{L^q} \lesssim t^{\frac{1}{q}-\frac{1}{2} + \delta} \|f\|_{L^{2,s}} \|g\|_{L^{2,s}}$.
\end{itemize}
\end{thm}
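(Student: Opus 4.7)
Exploit the hypothesis $\Delta=\emptyset$ (i.e., $\partial_\eta\Phi$ does not vanish on $\operatorname{Supp}\mu$) via integration by parts in $\eta$. Writing
\[u(t)=-i\int_0^t e^{i(t-s)a(D)}\,T_m\!\bigl(e^{isb(D)}f,\,e^{isc(D)}g\bigr)\,ds\]
and using the identity $e^{is\Phi}=(is\,\partial_\eta\Phi)^{-1}\partial_\eta e^{is\Phi}$, each integration by parts in $\eta$ produces a factor $1/s$ at the cost of a derivative which either falls on the smooth symbol $\mu/\partial_\eta\Phi$, converts $\widehat{f}(\xi-\eta)$ into $\widehat{xf}(\xi-\eta)$ (absorbing one $x$-power into $\|f\|_{L^{2,s}}$), or converts $\widehat{g}(\eta)$ into $\widehat{xg}(\eta)$. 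I first split $\int_0^t=\int_0^1+\int_1^t$; the small-time piece is handled directly by the $L^2\times L^2\to L^q$ boundedness of bilinear operators with smooth compactly supported symbols (Lemma~\ref{cormoran}). After one IBP on the large-time piece, the integrand becomes a sum of three terms of the form $s^{-1}\,T_{\tilde\mu}\!\bigl(e^{isb(D)}F,\,e^{isc(D)}G\bigr)$ with $(F,G)\in\{(f,g),(xf,g),(f,xg)\}$ and $\tilde\mu$ smooth and compactly supported.

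Each such piece is estimated in $L^q$ by combining the outer dispersive estimate $\|e^{i(t-s)a(D)}\cdot\|_{L^q}\lesssim (t-s)^{-(1/2-1/q)}\|\cdot\|_{L^{q'}}$ (available from $(H)$) with a H\"older-type bilinear bound $\|T_{\tilde\mu}(V,W)\|_{L^{q'}}\lesssim\|V\|_{L^{r_1}}\|W\|_{L^{r_2}}$, any $1/r_1+1/r_2=1/q'$, the linear dispersive bounds for $e^{isb(D)}$ and $e^{isc(D)}$ from $L^{r'}$ to $L^r$, and the Stein-Weiss-type embedding $L^{2,\beta}\hookrightarrow L^{r'}$ valid for $\beta>1/r'-1/2$. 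Optimizing the choice $(r_1,r_2)$ on each branch (symmetric, or with all the $L^{r'}$ integrability cost concentrated on one side) and carrying out integrals of the form $\int_1^t s^{-1-\alpha}(t-s)^{-(1/2-1/q)}\,ds$ produces three natural endpoints: at $s=0$, the bound $t^{1/2+1/(2q)}$ from Theorem~\ref{main2}; at $s=1/q$, a bound of size $t^{1/2-1/q+\delta}$; and at $s=1-1/q$, a bound of size $t^{1/q-1/2+\delta}$, which then saturates since additional IBPs do not decrease the time integral further. Complex interpolation of the bilinear operator $T_t$ between weighted $L^2$ spaces, via $[L^{2,s_0},L^{2,s_1}]_\theta=L^{2,(1-\theta)s_0+\theta s_1}$, recovers the three piecewise-linear regimes of the statement, with slopes $-3/2$, $-1$, and $0$.

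\textbf{Main obstacle.} The delicate step is obtaining the sharp intermediate endpoint at $s=1/q$: a single integration by parts combined with a symmetric H\"older exponent on $T_m$ (i.e., $r_1=r_2=2q'$) yields a bound that is weaker than needed, forcing a larger weight threshold than $1/q$. The fix is to process the three IBP branches separately and, on the branch where the weight lands on $f$ (resp.\ $g$), use an asymmetric H\"older that places the full $L^{r'}$ integrability cost (and hence the weight, via Stein-Weiss) precisely on the input carrying the $x$-factor. The $t^\delta$ loss is intrinsic to the scheme: it absorbs the $\epsilon$ loss in the Stein-Weiss embedding $L^{2,1/r'-1/2+\epsilon}\hookrightarrow L^{r'}$ and the logarithmic contributions of the time integral at endpoint scalings.
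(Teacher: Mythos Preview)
Your proposal and the paper share the central mechanism---integrate by parts in $\eta$ using $e^{is\Phi}=(is\,\partial_\eta\Phi)^{-1}\partial_\eta e^{is\Phi}$, feed the resulting pieces into the linear dispersive estimates, and interpolate---but the way you organize the interpolation is genuinely different, and the difference matters.

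The paper does \emph{not} work at fixed $q$. It establishes two endpoint families in the $q$ variable and then interpolates in $q$: an $L^\infty$ estimate (one IBP plus the $L^1\to L^\infty$ dispersive bound gives $t^{1/2-s+\delta}$ for $0\le s\le 1$, slope $-1$) and an $L^2$ estimate (where, crucially, the outer group is an isometry and all the decay must come from the inner factors). The $L^2$ part is the delicate one: the paper gets $\|U_\tau\|_{L^{2,1/2+\epsilon}\times L^2\to L^2}\lesssim\tau^{-1/2}$ without IBP and, after one IBP, the \emph{sum} estimate $\|U_\tau(f,g)\|_{L^2}\lesssim\tau^{-3/2}\bigl[\|f\|_{L^{2,1/2+\epsilon}}\|g\|_{L^{2,1}}+\|f\|_{L^{2,1}}\|g\|_{L^{2,1/2+\epsilon}}\bigr]$. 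It then interpolates \emph{between these two decay rates} by a dyadic localization in physical space (decompose $f,g$ on shells $|x|\sim 2^n,2^m$, take the geometric mean of the two bounds on each piece, and resum) to reach $\tau^{-1+\delta}$ at the symmetric weight $(1/2,1/2)$. Integrating in $\tau$ gives the $L^2$ bound $t^\delta$ at $s=1/2$, hence slope $-3/2$; interpolation with $L^\infty$ then produces the three regimes.

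Your scheme tries to hit the endpoints $s=1/q$ and $s=1-1/q$ directly in $L^q$ and then interpolate in $s$ by standard bilinear complex interpolation of weighted $L^2$ spaces. This is where the gap lies. After one IBP the branch $(xf,g)$ forces at least a full unit of weight on $f$ no matter how you choose $(r_1,r_2)$: your ``asymmetric H\"older'' can shift the Stein--Weiss cost onto the $x$-carrying factor, but $\|xf\|_{L^{r_1'}}$ still needs $\|f\|_{L^{2,1+\beta}}$ with $\beta>0$. Symmetrizing the resulting asymmetric bounds by ordinary bilinear complex interpolation lands you at symmetric weight $\ge 3/4$ (for $q=2$), not $1/2$; equivalently, interpolating your $s=0$ bound $t^{1/2+1/(2q)}$ against the one-IBP bound at $s=1$ gives slope $1-1/(2q)$, not $3/2$. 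The step you are missing is precisely the paper's dyadic real-interpolation trick, which handles the fact that the post-IBP bound is a \emph{sum} of two incompatible asymmetric estimates rather than a single bilinear bound. Without that (or without retreating to the paper's $L^2/L^\infty$ organization), your proposal does not deliver the first regime.
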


\begin{proof} The proof proceeds by interpolating between the following $L^2$ and the $L^\infty$ estimates. Indeed if $s<\frac{1}{q}$ then for $\theta=\frac{2}{q}$, we 
have $L^q:=(L^2,L^\infty)_\theta$ and $L^{2,s}=(L^{2,\frac{qs}{2}},L^{2,0})_\theta$ with $\frac{qs}{2}\leq \frac{1}{2}$. We conclude similarly for the two other cases.

Recall that $\Phi(\xi,\eta) \overset{def}{=} \phi(\xi-\eta,\eta)$, so that the hypothesis on $\phi$ translates into $\partial_\eta \Phi \neq 0$, and $u$ reads in Fourier space
$$
\widehat{u}(t,\xi) = e^{ita(\xi)} \int_0^t \int e^{i\tau \Phi(\xi,\eta)} \widehat{f}(\xi-\eta) \widehat{g}(\eta) m(\xi-\eta,\eta) \,d\eta\,d\tau.
$$

\bigskip \noindent \underline{The $L^2$ estimate:} We want to prove for every exponent $\delta>0$ (as small as we want)
\begin{equation}
\|u(t)\|_{L^2} \lesssim 
\left\{ \begin{array}{l} 
t^{\frac{3}{4} - \frac{3}{2}s + \delta} \|f\|_{L^{2,s}} \|g\|_{L^{2,s}} \quad \mbox{if $0 \leq s \leq \frac{1}{2}$} \\ 
\|f\|_{L^{2,s}} \|g\|_{L^{2,s}} \quad \mbox{if $s > \frac{1}{2}$}. \end{array} \right. \label{eq:am}
\end{equation}
The result for $s=0$ is given by Theorem \ref{main2}. So let us study the case $s=1/2$ so that (\ref{eq:am}) will follow by interpolation. 

We first observe that the embedding $L^{2,\frac{1}{2}+\delta} \subset L^1$ and the dispersive estimates $L^1 \rightarrow L^\infty$ give
\begin{align*}
\left\| \int e^{i\tau \phi(\xi-\eta,\eta)} \widehat{f}(\xi-\eta) \widehat{g}(\eta) m(\xi-\eta,\eta)\,d\eta \right\|_{L^2} & 
\lesssim \left\| T_{m}(e^{i\tau b(D)} f, e^{i\tau c(D)} g) \right\|_{L^2} \\
& \lesssim \|e^{i\tau b(D)} f\|_{L^\infty}  \|g\|_{L^2} \\
& \lesssim \tau^{-1/2} \|f\|_{L^{2,1/2+\delta}} \|g\|_{L^2}.
\end{align*}

Moreover, integrating by parts in $\eta$ via the identity $is\partial_\eta \Phi e^{is\Phi} = \partial_\eta e^{is\Phi}$ gives
\begin{align}
& \left\| \int e^{i\tau \Phi(\xi,\eta)} \widehat{f}(\xi-\eta) \widehat{g}(\eta)m(\xi-\eta,\eta) \,d\eta \right\|_{L^2} \\
& \qquad \qquad \qquad \qquad \lesssim \tau^{-1} \left\| \int e^{i\tau\Phi(\xi,\eta)} \partial_{\eta}\left[\partial _\eta\Phi(\xi,\eta)^{-1} \widehat{f}(\xi-\eta) \widehat{g}(\eta)m(\xi-\eta,\eta)\right] \,d\eta \right\|_{L^2}  \nonumber \\
 &\qquad \qquad \qquad \qquad \lesssim \tau^{-3/2} \left[ \|f\|_{L^{2,1/2+\delta}} \|g\|_{L^{2,1}}  + \|f\|_{L^{2,1}} \|g\|_{L^{2,1/2+\delta}} \right], \label{eq:l2}
\end{align}
where we repeat the same arguments as previously.

So let us fix $\tau$ and consider the bilinear operator $U \overset{def}{=}(f,g) \rightarrow  \int e^{i\tau \Phi(\xi,\eta)} \widehat{f}(\xi-\eta) \widehat{g}(\eta) m(\xi-\eta,\eta)\,d\eta$. We have obtained that 
\begin{equation} \|U\|_{L^{2,1/2+\delta} \times L^2 \rightarrow L^2} +  \|U\|_{L^2 \times L^{2,1/2+\delta}  \rightarrow L^2} \lesssim \tau^{-1/2} \label{eq:int1} \end{equation}
and
\begin{equation} \|U(f,g)\|_{L^2} \lesssim \tau^{-3/2} \left[ \|f\|_{L^{2,1/2+\delta}} \|g\|_{L^{2,1}}  + \|f\|_{L^{2,1}} \|g\|_{L^{2,1/2+\delta}} \right]. \label{eq:int2} \end{equation}

We now explain how we can interpolate between these two estimates to obtain the following one:
\begin{equation}
\left\| U(f,g) \right\|_{L^2} \lesssim \tau^{-1+\delta} \|f\|_{L^{2,1/2}} \|g\|_{L^{2,1/2}}, \label{eq:int}
\end{equation}
for any $\delta>0$.
We first consider the collection of dyadic intervals $I_0\overset{def}{=}[-1,1]$ and for $n\geq 1$, $I_n\overset{def}{=}[-2^{n},2^{n-1}] \cup [2^{n-1},2^n]$. On each set $I_n$, the weight $<x>$ is equivalent to $2^n$, so for $n\leq m$ two integers we know from (\ref{eq:int1}) that 
$$ \|U\|_{L^2(I_n) \times L^2(I_m) \rightarrow L^2} \lesssim \tau^{-1/2} 2^{n(1/2+\delta)}$$
and from (\ref{eq:int2}) that 
$$ \|U\|_{L^2(I_n) \times L^2(I_m) \rightarrow L^2} \lesssim \tau^{-3/2} \left[2^{n(1/2+\delta)}2^m + 2^{n} 2^{m(1/2+\delta)}\right] \lesssim \tau^{-3/2} 2^{n(1/2+\delta)} 2^m.$$
Consequently, taking the geometric average with $\delta'>2\delta$, we get 
$$  \|U\|_{L^2(I_n) \times L^2(I_m) \rightarrow L^2} \lesssim \tau^{-1+\delta'} 2^{n(1/2+\delta)} 2^{m(1/2-\delta')} \lesssim \tau^{-1+\delta} 2^{(n+m)(1/2-\delta)}.$$
So we have
\begin{align*}
 \|U(f,g)\|_{L^2} & \lesssim \tau^{-1+\delta'} \sum_{n,m\geq 0} 2^{(n+m)(1/2-\delta)} \|f\|_{L^2(I_n)} \|g\|_{L^2(I_m)} \\
 & \lesssim \tau^{-1+\delta'} \left(\sum_{n,m\geq 0} 2^{-(n+m)\delta}\right) \|f\|_{L^{2,1/2}} \|g\|_{L^{2,1/2}} \\
 & \lesssim  \tau^{-1+\delta'}  \|f\|_{L^{2,1/2}} \|g\|_{L^{2,1/2}}.
\end{align*}
Since $\delta,\delta'$ can be chosen as small as we want with $\delta'>2\delta>0$, $\delta'$ can be chosen arbitrary small, which concludes the proof of (\ref{eq:int}).

Finally from (\ref{eq:int}), we obtain (\ref{eq:am}) for $s=1/2$ by integrating in time for $\tau\in(0,t)$.

\bigskip \noindent \underline{The $L^\infty$ estimate:} We want to prove
\begin{equation}
\|u(t)\|_{L^\infty} \lesssim \left\{ \begin{array}{l} t^{\frac{1}{2} - s + \delta} \|f\|_{L^{2,s}} \|g\|_{L^{2,s}} \quad \mbox{if $0 \leq s \leq 1$} \\ 
t^{-\frac{1}{2}} \|f\|_{L^{2,s}} \|g\|_{L^{2,s}} \quad \mbox{if $s > 1$} \end{array} \right.
\label{eq:amontt}
\end{equation}
The case $s=0$ was stated in Theorem~\ref{main2}. Recall that, writing
$$
u(t) \overset{def}{=} \int_0^t F(t,s) \,ds,
$$
the $L^1 \rightarrow L^\infty$ dispersive estimate gives
\begin{equation}
\label{grebehuppe1}
\left\|F(t,s) \right\|_{L^\infty} \lesssim \frac{1}{\sqrt{t-s}} \|f\|_{L^2} \|g\|_{L^2}.
\end{equation}
Next, integrating by parts via the formula $is\partial_\eta \Phi e^{is\Phi} = \partial_\eta e^{is\Phi}$ gives
\begin{equation*}
\begin{split}
& \int e^{ita(\xi)} e^{is\Phi(\xi,\eta)} \widehat{f}(\xi-\eta) \widehat{g}(\eta) m(\xi-\eta,\eta)\,d\eta \\
 &\qquad \qquad \qquad \qquad = \int e^{ita(\xi)} e^{is\Phi(\xi,\eta)} \frac{1}{i s \partial_\eta \Phi(\xi,\eta)} \partial_\eta \widehat{f}(\xi-\eta) \widehat{g}(\eta) m(\xi-\eta,\eta) \,d\eta \\
& \;\;\;\;\qquad \qquad \qquad \qquad + \int e^{ita(\xi)} e^{is\Phi(\xi,\eta)} \frac{1}{i s \partial_\eta \Phi(\xi,\eta)} \widehat{f}(\xi-\eta) \partial_\eta \widehat{g}(\eta) m(\xi-\eta,\eta) \,d\eta \\
& \;\;\;\;\qquad \qquad \qquad \qquad + \int e^{ita(\xi)} e^{is\Phi(\xi,\eta)} \partial_\eta \left[ \frac{m(\xi-\eta,\eta)}{i s \partial_\eta \Phi(\xi,\eta)} \right]\widehat{f}(\xi-\eta) \widehat{g}(\eta)\,d\eta,
\end{split}
\end{equation*}
which reads in physical space
\begin{equation}
\begin{split}
F(t,s) & = \frac{1}{s} e^{i(t-s) a(D)} T_{\frac{m}{i \partial_\eta \Phi}} \left( e^{itb(D)} x f\,,\,e^{itc(D)} g \right) \\
& \;\;\;\;\;\;\;\; + \frac{1}{s} e^{i(t-s) a(D)} T_{\frac{m}{i \partial_\eta \Phi}} \left( e^{itb(D)} f\,,\,e^{itc(D)} x g \right) \\
& \;\;\;\;\;\;\;\; + \frac{1}{s} e^{i(t-s) a(D)} T_{\partial_\eta \frac{m}{i \partial_\eta \Phi}} \left( e^{itb(D)} f\,,\,e^{itc(D)} g \right) \\
& \overset{def}{=} I + II + III.
\end{split}
\label{eq:split}
\end{equation}
Using the $L^1 \rightarrow L^\infty$ dispersive estimate,
$$
\left\| I \right\|_{L^\infty} \lesssim \frac{1}{s \sqrt{t-s}} \left\| T_{\frac{m}{i \partial_\eta \Phi}} \left( e^{itb(D)} x f\,,\,e^{itc(D)} g \right) \right\|_{L^1} \lesssim \frac{1}{s \sqrt{t-s}} \left\| xf \right\|_{L^2} \left\| g \right\|_{L^2} \lesssim \frac{1}{s \sqrt{t-s}} \|f\|_{L^{2,1}} \|g\|_{L^{2,1}}.
$$
Similar estimates for $II$ and $III$ give
\begin{equation}
\label{grebehuppe2}
\|F(t,s)\|_{L^\infty} \lesssim \frac{1}{s \sqrt{t-s}} \|f\|_{L^{2,1}} \|g\|_{L^{2,1}}.
\end{equation}
Repeating the above argument, but integrating by parts twice via the identity $\frac{1}{is\partial_\eta
\Phi} \partial_\eta e^{is\Phi} = e^{is\Phi}$ 
yields
\begin{equation}
\label{grebehuppe3}
\|F(t,s)\|_{L^\infty} \lesssim \frac{1}{s^2 \sqrt{t-s}} \|f\|_{L^{2,2}} \|g\|_{L^{2,2}}.
\end{equation}
Interpolating between (\ref{grebehuppe1}), (\ref{grebehuppe2}) and~(\ref{grebehuppe3}) gives finally
$$
\|F(t,s)\|_{L^\infty} \lesssim \frac{1}{s^\sigma \sqrt{t-s}} \|f\|_{L^{2,\sigma}} \|g\|_{L^{2,\sigma}}\quad\mbox{for $0 \leq \sigma \leq 2$}.
$$
Integrating this inequality in $s$ (recall that $u(t) = \int_0^t F(t,s)\,ds$) gives the desired estimate.
\end{proof}

\subsection{Decay of solutions: the role of space-time resonances}

We want to consider here the case of a point which would be resonant both in space and in time; we need to combine the two approches which were previously presented.

\begin{thm}
Assume as usual that $m$ is smooth and compactly supported, and that $(H)$ holds. Assume further that the point $p_0\overset{def}{=}(\xi_0,\eta_0)$ is the only point in the 
support of $m$ which is resonant in space and time, or in other words such that $\phi(p_0)=\left( \partial_\xi - \partial_\eta \right) \phi(p_0)=0$.
Moreover, assume that $\phi$ and $\left( \partial_\xi - \partial_\eta \right) \phi$ vanish at order one on their zero sets, and that the two smooth curves $\{ \phi=0 \}$ and 
$\{ \left( \partial_\xi - \partial_\eta \right) \phi =0 \}$ are non tangentially intersecting at $p_0$ with $\partial_\xi \phi(p_0)\neq 0$. \\
Then the solution $u$ of~(\ref{eq:dispersive}) satisfies the following bounds: for $q\in[2,\infty)$ and every $\delta>0$
\begin{itemize}
\item if $s\in[0,\frac{1}{4}]$ then 
$$ \|u(t)\|_{L^q} \lesssim t^{\frac{1}{q}-s\left(\frac{1}{4}+\frac{7}{2q}\right)+\delta} \|f\|_{L^{2,s}} \|g\|_{L^{2,s}}$$
\item if $s\in(\frac{1}{4},1]$ then 
 $$ \|u(t)\|_{L^q} \lesssim t^{-s\left(\frac{1}{4}-\frac{1}{2q}\right)+\delta} \|f\|_{L^{2,s}} \|g\|_{L^{2,s}}.$$
\end{itemize}
\end{thm}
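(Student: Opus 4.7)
The plan is to combine the two techniques that were used, respectively, for Theorem~\ref{thm:timereso} and Theorem~\ref{thm:main3}: namely, a dyadic decomposition of the symbol $m$ according to the size of $t\phi$ (which produces layers $\Gamma_{2^j/t}$ around the time-resonance curve $\Gamma$), and integration by parts in $\eta$ driven by $\partial_\eta\Phi$, whose size measures the distance to the space-resonance curve $\Delta$. Since $m$ is compactly supported and $p_0$ is the only space-time resonant point, the contributions coming from a region disjoint from a small neighbourhood of $p_0$ are already handled by Theorem~\ref{thm:timereso} (away from $\Delta$ only time resonances survive) or by Theorem~\ref{thm:main3} (away from $\Gamma$ only space resonances survive); both give bounds at least as good as the target. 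The actual work is therefore concentrated near $p_0$.

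Near $p_0$, I would perform a bi-dyadic decomposition $m=\sum_{j,k\geq 0} m_{j,k}$, where $m_{j,k}$ is supported on the set $\{|\phi|\sim 2^j/t\}\cap\{|\partial_\eta\Phi|\sim 2^{-k}\}$. Because the two curves $\Gamma$ and $\Delta$ intersect transversally at $p_0$ (with $\partial_\xi\phi(p_0)\neq 0$), the support of $m_{j,k}$ is contained in a parallelogram of area $\sim 2^{j-k}/t$, and the symbol itself is of size $\sim t\,2^{-2j}$. The range of relevant indices is limited by $2^{-k}\gtrsim 2^j/t$, which is the natural scale on which the two curves separate in the rescaled picture.

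For each block $T_{m_{j,k}}$ I would then derive two competing estimates. The first is a direct bilinear multiplier bound obtained by feeding the volumetric information above into Lemma~\ref{macareux}; this plays the role of the estimate in Theorem~\ref{thm:timereso} and degrades polynomially in $j$. The second comes from one or two integrations by parts in $\eta$, using $is\,\partial_\eta\Phi\, e^{is\Phi}=\partial_\eta e^{is\Phi}$ as in the proof of Theorem~\ref{thm:main3}; each application trades a factor $(\tau\,2^{-k})^{-1}$ of decay for one power of the spatial weight on $f$ or $g$. After integration in $\tau$ this yields a second bound that improves in $k$ and $j$, at a cost measured by the weight $s$. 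Taking a geometric average of the two bounds, exactly as was done in \eqref{eq:int1}--\eqref{eq:int2}, and then summing over the dyadic indices $(j,k)$, yields $L^2$ and $L^\infty$ bounds which, by Riesz--Thorin interpolation between $L^2$ and $L^\infty$ and in the weight exponent, produce the two regimes $s\leq 1/4$ and $s\geq 1/4$ of the statement.

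The main obstacle lies precisely at the corner of the dyadic sum, where $2^{-k}\simeq 2^j/t$, that is, where $m_{j,k}$ is supported right at the transverse intersection of $\Gamma$ and $\Delta$ in the rescaled picture. There, neither integration by parts provides a gain on its own, and the two competing estimates must be balanced sharply; this balance is what fixes the exponent of $s$ in each of the two regimes. The threshold $s=1/4$ reflects the value at which the weight cost of the $\eta$ integration by parts exactly offsets the time-resonance gain coming from the dyadic decay in $j$. Finally, the factor $t^\delta$ in the statement is the expected logarithmic loss produced by borderline summability of the geometric series at that corner, handled via the same interpolation argument already used in the proof of Theorem~\ref{thm:main3}.
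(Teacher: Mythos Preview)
Your bi-dyadic plan is plausible in spirit, but it differs substantially from what the paper actually does, and in one respect it misses a simplification that makes the paper's argument much shorter.

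The paper does \emph{not} carry out a full bi-dyadic analysis in $(j,k)$. Instead it first observes that three of the four interpolation endpoints are already known: the $L^2$ bound for all $s$ comes directly from Theorem~\ref{thm:timereso} (time resonances alone), and the $L^\infty$ bound for $s=0$ also comes from Theorem~\ref{thm:timereso}. Thus only the single endpoint $q=\infty$, $s=1$ needs to be established, namely $\|u(t)\|_{BMO}\lesssim t^{-1/4+\delta}\|f\|_{L^{2,1}}\|g\|_{L^{2,1}}$; everything else follows by bilinear complex interpolation. You do not mention this reduction, and it is what makes the paper's proof tractable.

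For that single endpoint the paper uses a three-region (not bi-dyadic) partition governed by a pair of thresholds $\epsilon_1,\epsilon_2\in(t^{-1/2},1)$: a region $\Omega_1$ where $|\phi|$ dominates (handled by integrating the $s$-integral exactly and using Coifman--Meyer bounds on $m/\phi$), a region $\Omega_2$ where $|\partial_\eta\Phi|$ dominates (handled by the single integration by parts in $\eta$ from Theorem~\ref{thm:main3}, now with an extra factor $\epsilon_2^{-1}$), and a small ball $\Omega_3$ of radius $\sim\max(\epsilon_1,\epsilon_2)$ around $p_0$ (handled directly via Lemma~\ref{supercormoran} and Lemma~\ref{lem:tron}). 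The three resulting bounds are then balanced by choosing $\epsilon_1=\epsilon_2=t^{-1/4+\delta}$. So the paper replaces your double dyadic sum and geometric-average interpolation by a single optimization over one scale parameter.

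Your approach could in principle be pushed through, but a few of the ingredients as stated are shaky: the size of the symbol on each block is $\sim t\,2^{-j}$, not $t\,2^{-2j}$; Lemma~\ref{macareux} is formulated for tubular neighbourhoods $\Gamma_\epsilon$, not for the small parallelograms your $m_{j,k}$ live on, so you would need an ad hoc variant; and the claimed constraint $2^{-k}\gtrsim 2^j/t$ on the index range is not obviously correct (the transversal intersection makes the parallelogram nonempty for any pair $(j,k)$ with $2^j/t\lesssim 1$ and $2^{-k}\lesssim 1$). None of these is fatal, but together they mean your sketch is considerably further from a proof than the paper's four-step argument.
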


\begin{rem} 
\begin{itemize}
\item For $q=\infty$, the estimates still hold with $BMO$ instead of $L^\infty$.
\item Notice that the assumptions of the theorem imply that, if $\phi$ and $\nabla_\eta \phi$ vanish at order 1 on $\Gamma$ and $\Delta$ respectively,
then at the intersection point of $\Gamma$ and $\Delta$, $\Gamma$ is characteristic along $\xi+\eta$. Fortunately, this turns out not be a problem in the estimates.
\item The technical assumption $\partial_\xi \phi(p_0)\neq 0$ is exactly the same that the one of Theorem \ref{goeland}:  $\Phi_\xi(\xi_0,\eta_0) \neq 0$.
\item In the previous results, for $s=1$, we get that for large $t>>1$
$$ \|u(t)\|_{L^q} \lesssim t^{\frac{1}{2q}-\frac{1}{4}+\delta}\|f\|_{L^{2,1}} \|g\|_{L^{2,1}},$$ for every $\delta>0$.
This estimate is optimal (up to $\delta$ which can be chosen as small as we want) due to the lower bound discussed in Subsubsection \ref{subsub:lower}.
\end{itemize}
\end{rem}

\begin{proof}  The $L^2$ inequalities ($q=2$) have already been proved in Theorem \ref{thm:timereso}. Indeed, from Theorem \ref{thm:timereso} we know that $u(t)$ can be estimated in $L^2$ with a bound $t^{\frac{1-4s}{2}}$ if $s<\frac{1}{4}$ and $t^{\delta}$ for every $\delta>0$ if $s\geq \frac{1}{4}$. Moreover, Theorem \ref{thm:timereso} yields that for every $\delta>0$
$$ \|u(t)\|_{L^\infty} \lesssim t^{\delta} \|f\|_{L^2} \|g\|_{L^2}.$$
So it suffices to check the only one remaining extremal point : $q=\infty$ with $s=1$. We now aim at proving that 
\begin{equation} 
\|u(t)\|_{BMO} \lesssim t^{-\frac{1}{4} + \delta} \|f\|_{L^{2,1}} \|g\|_{L^{2,1}}, \label{eq:ambis}
\end{equation}
which will then imply the desired result by interpolation.

To prove (\ref{eq:ambis}), the main idea is to combine the two previous situations, so let us consider small parameters $\epsilon_1,\epsilon_2\in (t^{-1/2},1)$ and a 
smooth partition of the unity with respect to the domains
$$ \Omega_1\overset{def}{=}\{(\xi,\eta),\ |\phi(\xi,\eta)|>  \epsilon_1 + \frac{1}{2} |\left( \partial_\xi - \partial_\eta \right) \phi(\xi,\eta)| \},$$
$$ \Omega_2 \overset{def}{=} \{(\xi,\eta),\ |\left( \partial_\xi - \partial_\eta \right) \phi(\xi,\eta)|>\epsilon_2+\frac{1}{2}|\phi(\xi,\eta)|\}$$
and
$$ \Omega_3 \overset{def}{=} \{(\xi,\eta),\ |\phi(\xi,\eta)|<2\epsilon_1 \quad \textrm{and} \quad |\left( \partial_\xi - \partial_\eta \right) \phi(\xi,\eta)|< 2 \epsilon_2\}.$$
More precisely, $\Omega_1$ can be thought as a truncated ``cone'' around the curve $|\left( \partial_\xi - \partial_\eta \right) \phi|=0$ and of top $p_0$, similarly 
for $\Omega_2$ around the other curve. This decomposition gives rise from the smooth symbol $m$ to three symbols $m_i$ and we have
$$ u(t)=u_1(t)+u_2(t)+u_3(t)$$
with
$$ \widehat{u_i}(t,\xi) :=  e^{ita(\xi)} \int_0^t \int e^{is\phi(\xi-\eta,\eta)} m_i(\xi-\eta,\eta) \widehat{f}(\xi-\eta) \widehat{g}(\eta) \,d\eta\,ds.$$

\mb
{\underline{Step 1 :}} Estimate of $u_1$ in $BMO$ with $s=1$. \\
We perform the same decomposition, as used previously in the proof of Theorem \ref{main2}, so
$$ u_1(t) = I_t(f,g)-II_t(f,g),$$
with
$$ I_t(f,g) = T_{m_1/\phi}(e^{itb(D)} f, e^{itc(D)}g)$$
and
$$ II_t = e^{ita(D)} T_{m_1/\phi}(f,g).$$
The symbol $m_1$ is of Coifman-Meyer type~\cite{CM} and $\phi$ is smooth and lower-bounded by $\epsilon_1$ so $T_{m_1/\phi}$ is bounded from 
$L^\infty \times L^\infty$ to $BMO$~\cite{CM2} with norm $\lesssim \epsilon_1^{-1}$. Using the dispersive inequalities for the linear evolution groups
\begin{align*}
 \|I_t(f,g)\|_{BMO} & \lesssim \epsilon_1^{-1} \|e^{itb(D)} f\|_{L^\infty} \|e^{itc(D)}g\|_{L^\infty} \lesssim \epsilon_1^{-1} t^{-1} \|f\|_{L^1} \|g\|_{L^1} \\
 & \lesssim \epsilon_1^{-1}t^{-1} \|f\|_{L^{2,1}} \|g\|_{L^{2,1}},
\end{align*}
where we used $L^{2,1} \subset L^1$.
Then we decompose the symbol $m_1$ around $p_0$ for scales $2^{j}$ from $\epsilon_1$ to $1$ (here the scale means the distance in the frequency plane to the point $p_0$, which in $\Omega_1$ is equivalent to $|\phi|$), as follows
$$ m_1 = \sum_{\epsilon_1 \leq 2^j \lesssim 1} m_1 \chi(2^{-j}\phi ),$$
where $\chi$ is a compactly supported and smooth function.
The symbol $m_1\chi(2^{-j}\phi)/\phi$ is of Coifman-Meyer type with a bound $2^{-j}$ so the operator $T_{m_1\chi(2^{-j}\phi)/\phi}$ is bounded from $L^2 \times L^2$ to 
$L^1$ with a bound $2^{-j}$. Since when we evaluate $T_{m_1\chi(2^{-j}\phi)/\phi}(f,g)$, the functions $f$ and $g$ may be assumed supported in frequency on an interval of 
length $2^j$, we deduce from Lemma \ref{lem:tron} that
\begin{align*}
 \|II_t(f,g)\|_{L^\infty} & \lesssim t^{-1/2} \|T_{m_1/\phi}(f,g) \|_{L^1} \\
 & \lesssim  t^{-1/2} \left(\sum_{\epsilon_1\leq 2^j \lesssim 1}  2^{j} 2^{-j}\right) \|f\|_{L^{2,1}} \|g\|_{L^{2,1}} \lesssim t^{-1/2} |\log(\epsilon_1)| \|f\|_{L^{2,1}} \|g\|_{L^{2,1}}.
\end{align*}
So we obtain since $\epsilon_1\in [t^{-1/2},1]$, for every $\delta>0$
\be{eq:u1-1} \|u_1(t)\|_{BMO} \lesssim t^{-1/2}\epsilon_1^{-\delta}  \|f\|_{L^{2,1}} \|g\|_{L^{2,1}}. \ee

\mb
{\underline{Step 2 :}} Estimate of $u_2$ in $L^\infty$ with $s=1$.\\
For $u_2$, we will follow the proof of Theorem \ref{thm:main3}, with the symbol $m_2$ supported on a cone with $|(\xi,\eta)-p_0|\geq \epsilon_2$. 
In our current situation, the symbol $m_2$ satisifes the 
H\"ormander regularity condition (which means $|\partial^\alpha m_2(\xi,\eta)| \lesssim |(\xi,\eta)-p_0|^{-|\alpha|}$) and is supported on $\Omega_2$ which can be considered 
as a cone of top $p_0$. So $\Omega_2$ can be split into different parts distant at $2^j$ from $p_0$ for $\epsilon_2\leq 2^j \lesssim 1$, as follows
$$ m_2 = \sum_{\epsilon_2 \leq 2^j \lesssim 1} m_2 \chi(2^{-j}(\cdot-p_0))$$
where $\chi$ is a smooth and compactly supported function. Following this partition of the unity.
Furthermore, for each of these pieces, $\chi(2^{-j}(\cdot-p_0)$ restricts frequencies to a ball of radius $\sim 2^j$, so it is possible to add projections
$\pi_j$ on $f$ and $g$, where $\pi_j$ projects on intervals of length $\sim 2^j$ which we do not specify.

These considerations lead to the following modification of (\ref{grebehuppe2}):
\begin{subequations}
\begin{align}
\label{aigle1}
\|F(t,s)\|_{L^\infty} & \lesssim \frac{1}{s\sqrt{t-s}} \left[  \sum_{\epsilon_2\leq 2^j \lesssim 1}  \left\| T_{\frac{m_2\chi(2^{-j}(\cdot-p_0))}{\partial_\eta \Phi}}
\right\|_{L^2 \times L^2 \rightarrow L^1} 
\|x f\|_{L^2} \|\pi_j g\|_{L^2} \right.\\
\label{aigle2}
& \quad \quad  + \sum_{\epsilon_2\leq 2^j \lesssim 1} \left\| T_{\frac{m_2\chi(2^{-j}(\cdot-p_0))}{\partial_\eta \Phi}} \right\|_{L^2 \times L^2 \rightarrow L^1} 
\|\pi_j f\|_{L^2} \|xg\|_{L^2} \\
\label{aigle3}
& \quad \quad \left. + \sum_{\epsilon_2\leq 2^j \lesssim 1} \left\| T_{\partial_\eta \frac{m_2\chi(2^{-j}(\cdot-p_0))}{\partial_\eta \Phi}} \right\|_{L^2 \times L^2 \rightarrow L^1}
 \|\pi_j f\|_{L^2} \|\pi_j g\|_{L^2} \right] \\
& \overset{def}{=} \sum_j \left[ I_j + II_j + III_j \right].
\end{align}
\end{subequations}
To bound $I_j$, observe that $2^{j} \frac{m_2\chi(2^{-j}(\cdot-p_0))}{\partial_\eta \Phi}$ is a Coifman-Meyer symbol, thus 
$\left\| T_{\frac{m_2\chi(2^{-j}(\cdot-p_0))}{\partial_\eta \Phi}} \right\|_{L^2 \times L^2 \rightarrow L^1} \lesssim 2^{j}$. Furthermore, by Lemma \ref{lem:tron},
$\| \pi_j g \|_{L^2} \lesssim 2^{j/2} \|g\|_{L^{2,1}}$. Therefore,
$$
I_j \lesssim 2^{-j} \|f\|_{L^{2,1}} \|\pi_j g\|_{L^2} \lesssim 2^{-\frac{j}{2}} \|f\|_{L^{2,1}} \|\pi_j g\|_{L^{2,1}} .
$$
Similarly,
$$
II_j \lesssim 2^{-j} \|f\|_{L^2} \|g\|_{L^{2,1}} \lesssim 2^{-\frac{j}{2}} \|f\|_{L^{2,1}} \|g\|_{L^{2,1}}.
$$
Finally, observe that $2^{2j} \partial_\eta \frac{m_2\chi(2^{-j}(\cdot-p_0))}{\partial_\eta \Phi}$ is a Coifman-Meyer symbol. Applying in addition Lemma~\ref{lem:tron} gives
$$
III_j \lesssim 2^{-j} \|f\|_{L^{2,1}} \|g\|_{L^{2,1}}.
$$
It follows that 
$$ \|F(t,s)\|_{L^\infty} \lesssim \left(\sum_{\epsilon_2\leq 2^j \lesssim 1} 2^{-\frac{j}{2}}+2^{-j} \right) \frac{1}{s \sqrt{t-s}} \|f\|_{L^{2,1}} \|g\|_{L^{2,1}} \lesssim \epsilon_2^{-1} \frac{1}{s \sqrt{t-s}} \|f\|_{L^{2,1}} \|g\|_{L^{2,1}},$$
which means that in (\ref{grebehuppe2}) we get a new extra factor $\epsilon_2^{-1}$. Finally, applying similar arguments as for Theorem \ref{thm:main3}, we conclude that for any $\delta>0$ we have
\begin{equation}
\|u_2(t)\|_{L^\infty} \lesssim \epsilon_2^{-1-\delta} t^{-\frac{1}{2} + \delta} \|f\|_{L^{2,1}} \|g\|_{L^{2,1}}.
\label{eq:u2}
\end{equation}

\mb
{\underline{Step 3 :}} Estimate of $u_3$ in $L^\infty$ with $s=1$. \\
For $u_3$, we know that the symbol $m_3$ is supported on a ball of radius  $\epsilon:=\max\{\epsilon_1,\epsilon_2\}$ around the space-time resonent point $p_0$. 

We follow similar arguments as for Proposition \ref{prop:cas3}, so we split the ball $B(p_0,\epsilon)$ into ``strips" with scale $\phi$ from $0$ to $\epsilon$:
$$ m_3 = \sum_{0< 2^j \lesssim \epsilon} m_3 \chi(2^{-j}\phi)$$
which implies
\begin{align*}
 u_3(t) = \sum_{0<2^k \lesssim \epsilon} T_{m_3^j}(f,g)
\end{align*}
where $T_{m_3^j}$ is the bilinear Fourier multiplier associated to the symbol
$$ m_3^j(\xi,\eta)=e^{ita(\xi+\eta)} m_3(\xi,\eta) \frac{e^{it\phi(\xi,\eta)}-1}{\phi(\xi,\eta)}\chi(2^{-j}\phi(\xi,\eta)).$$
For each scale $2^j$, the symbol $m_3^j$ is bounded by $\max\{t,2^{-j}\}$ and so Lemmas \ref{supercormoran} and \ref{lem:tron} with Remark \ref{rem:car} imply (the functions $f,g$ may be supposed to be frequentially supported on an interval of length $\epsilon$)
$$ \|T_{m_3^j}(f,g)\|_{L^\infty} \lesssim \max\{t,2^{-j}\} 2^j \|f\|_{L^2} \|g\|_{L^2} \lesssim  \max\{t,2^{-j}\} \epsilon \|f\|_{L^{2,1}} \|g\|_{L^{2,1}}.$$
By summing all these inequalities over the scale $2^j$, we get
\begin{align}
\| u_3(t)\|_{L^\infty} & \lesssim \left(t \sum_{2^j \leq t^{-1}} 2^ {j} + \sum_{t^{-1}\leq 2^j \leq \epsilon} 1\right) \epsilon \|f\|_{L^{2,1}} \|g\|_{L^{2,1}} \nonumber \\
 & \lesssim \langle\log(\epsilon t) \rangle \epsilon \|f\|_{L^{2,1}} \|g\|_{L^{2,1}} \nonumber \\
 & \lesssim (\epsilon t)^{\delta} \epsilon \|f\|_{L^{2,1}} \|g\|_{L^{2,1}}, \label{eq:u3}
\end{align}
for every $\delta>0$, since $\epsilon t>1$.

\mb
{\underline{Step 4 :}} End of the proof. \\
Optimizing over $\epsilon_1$ and $\epsilon_2$ leads to
$$ \epsilon_1 = \epsilon_2 = \epsilon_t:=t^{-\frac{1}{4}+\delta}.$$
As required, we have $\epsilon_t\in [t^{-1/2},1]$. So by summing (\ref{eq:u1-1}) and (\ref{eq:u3}) with the estimate for $u_2$, we now have for every small enough $\delta>0$
$$\|u(t)\|_{BMO} \lesssim \left[t^{-\frac{1}{2}}\epsilon_t^{-\delta} + (\epsilon t)^{\delta} \epsilon_t\right] \|f\|_{L^{2,1}} \|g\|_{L^{2,1}}.$$
Since $\epsilon_t\geq t^{-\frac{1}{2}}$, the main term in the previous inequality is the second one so we deduce
for every $\delta>0$
$$  \|u(t)\|_{BMO} \lesssim t^{-\frac{1}{4}+\delta}, $$
which concludes the proof of (\ref{eq:ambis}).
\end{proof}

\appendix

\section{Multilinear estimates}

\label{Ame}

\begin{lem}
\label{cormoran}
Suppose that the symbol $\sigma(\xi,\eta)$ is bounded (i.e. $\|\sigma\|_{L^\infty} \lesssim 1$) and supported on a ball of radius $\epsilon$, say $B(0,\epsilon)$. Then for $q\in[2,\infty]$ and $s< \frac{1}{2}$
$$ \left\| T_\sigma (f,g) \right\|_{L^q} \lesssim \epsilon^{1-\frac{1}{q}+2s} \|f\|_{L^{2,s}} \|g\|_{L^{2,s}}$$
and
$$ \left\| T_\sigma (f,g) \right\|_{L^q} \lesssim \epsilon^{2-\frac{1}{q}} \|f\|_{L^{2,s}} \|g\|_{L^{2,s}}$$
if $s>\frac{1}{2}$.
\end{lem}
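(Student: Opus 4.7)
My plan is to reduce the lemma to two endpoint estimates, at $q=2$ and $q=\infty$, and then interpolate. Both endpoints will be handled by exploiting two facts: (i) the symbol $\sigma$ is supported in a small ball, which forces $\hat f$ and $\hat g$ to effectively live on $B(0,\epsilon)$ through the formula $T_\sigma(f,g)(x)=\int e^{ix(\xi+\eta)}\sigma(\xi,\eta)\hat f(\xi)\hat g(\eta)\,d\xi\,d\eta$; and (ii) the norm $\|f\|_{L^{2,s}}=\|\langle x\rangle^s f\|_{L^2}$ is equivalent, via Plancherel, to the classical Sobolev norm $\|\hat f\|_{H^s}$ of the Fourier transform in the frequency variable. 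Combining these two facts via Sobolev embedding in 1D will produce the $\epsilon$-gain that carries the weights.

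For the $L^\infty$ endpoint, I would start from the crude bound
$$
\|T_\sigma(f,g)\|_{L^\infty}\le \|\sigma\|_{L^\infty}\int_{|\xi|<\epsilon}|\hat f(\xi)|\,d\xi\int_{|\eta|<\epsilon}|\hat g(\eta)|\,d\eta,
$$
and then estimate $\int_{|\xi|<\epsilon}|\hat f|\,d\xi$ by Hölder. For $s<1/2$, the 1D Sobolev embedding $H^s\hookrightarrow L^{2/(1-2s)}$ together with Hölder gives $\int_{|\xi|<\epsilon}|\hat f|\,d\xi\lesssim \epsilon^{1/2+s}\|f\|_{L^{2,s}}$; for $s>1/2$, the embedding $H^s\hookrightarrow L^\infty$ gives the sharper $\int_{|\xi|<\epsilon}|\hat f|\,d\xi\lesssim \epsilon\,\|f\|_{L^{2,s}}$. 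This produces $\epsilon^{1+2s}$ and $\epsilon^{2}$ respectively, matching the two claimed bounds at $q=\infty$.

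For the $L^2$ endpoint, I would use Plancherel:
$$
\mathcal F\bigl(T_\sigma(f,g)\bigr)(\zeta)=\int \sigma(\zeta-\eta,\eta)\,\hat f(\zeta-\eta)\,\hat g(\eta)\,d\eta,
$$
and observe that for fixed $\zeta$ the $\eta$-integrand is supported on a set of measure $\lesssim\epsilon$, while in $\zeta$ it vanishes outside $|\zeta|<2\epsilon$. Cauchy–Schwarz in $\eta$ and then integration in $\zeta$ (after changing variables $\eta'=\zeta-\eta$) yields
$$
\|T_\sigma(f,g)\|_{L^2}^2\lesssim \epsilon\,\bigl\|\hat f\,\mathbf 1_{B(0,\epsilon)}\bigr\|_{L^2}^{2}\bigl\|\hat g\,\mathbf 1_{B(0,\epsilon)}\bigr\|_{L^2}^{2}.
$$
The restriction norms are then controlled by the same two Sobolev embeddings as above: Hölder against $L^{2/(1-2s)}$ gives $\|\hat f\,\mathbf 1_{B(0,\epsilon)}\|_{L^2}\lesssim \epsilon^{s}\|f\|_{L^{2,s}}$ when $s<1/2$, and $\|\hat f\,\mathbf 1_{B(0,\epsilon)}\|_{L^2}\lesssim \epsilon^{1/2}\|f\|_{L^{2,s}}$ when $s>1/2$. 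This gives the $L^2$ bounds $\epsilon^{1/2+2s}$ and $\epsilon^{3/2}$ respectively.

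To finish, I would interpolate bilinearly between these endpoints. For $s<1/2$ one obtains $\epsilon^{(1/2+2s)(2/q)+(1+2s)(1-2/q)}=\epsilon^{1-1/q+2s}$, and for $s>1/2$ one obtains $\epsilon^{3/q+2(1-2/q)}=\epsilon^{2-1/q}$, exactly the two claimed estimates. The main technical obstacle, and the source of the threshold $s=1/2$, is the dimension-one Sobolev embedding applied to $\hat f$: below $1/2$ one must pay a loss $\epsilon^{1/2-s}$ on each restriction to $B(0,\epsilon)$, whereas above $1/2$ one gains the full $\epsilon$-power from the $L^\infty$ embedding; everything else is bookkeeping.
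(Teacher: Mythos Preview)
Your proof is correct and follows essentially the same approach as the paper: endpoint estimates at $q=2$ (via Plancherel and Cauchy--Schwarz) and $q=\infty$ (via the trivial bound), followed by interpolation, with the $\epsilon^s$-gain coming from the one-dimensional Sobolev embedding $H^s\hookrightarrow L^{2/(1-2s)}$ (resp.\ $L^\infty$) applied to $\widehat f$ restricted to $B(0,\epsilon)$. The only cosmetic difference is that the paper first establishes the $s=0$ case, interpolates in $q$, and then appeals to the truncation Lemma~\ref{lem:tron} (which is exactly your Sobolev-embedding step) to upgrade to $s>0$, whereas you build the weight in at each endpoint before interpolating; the content is the same.
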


\begin{proof} Consider the first claim in the case $s=0$. The lemma is obtained by interpolating between the endpoints $q=2$ and $q=\infty$. If $q=2$, it follows from an application of the Plancherel equality and the Cauchy-Schwarz inequality:
\begin{equation}
\begin{split}
\left\|T_\sigma(f,g)\right\|_{L^2}^2 & = \int \left| \int \sigma(\xi-\eta,\eta) \widehat{f}(\xi-\eta) \widehat{g}(\eta) \,d\eta \right|^2 d\xi  \\
& \leq \int \left(\int |\sigma(\xi-\eta,\eta)|^2 \,d\eta\right) \left( \int \left| \widehat{f}(\xi-\eta) \widehat{g}(\eta) \right|^2 d\eta\right) \,d\xi  \\
& \lesssim \epsilon \|f\|_{L^2}^2 \|g\|_{L^2}^2.
\end{split}
\end{equation}
If $q=\infty$, use Cauchy-Schwarz again to get
\begin{equation}
\begin{split}
\left\|T_\sigma(f,g)\right\|_{L^\infty} & = \left\| \int \int e^{ix(\xi+\eta)} \sigma(\xi,\eta) \widehat{f}(\xi) \widehat{g}(\eta) \,d\eta\,d\xi \right\|_{L^\infty} \lesssim \int \int_{B(0,\epsilon)} \left| \widehat{f}(\xi) \widehat{g}(\eta) \right| \,d\eta\,d\xi \\
& \lesssim \epsilon \left[ \int \int \left| \widehat{f}(\xi) \widehat{g}(\eta) \right|^2 \,d\eta \,d\xi \right]^{1/2} \lesssim \epsilon \|f\|_{L^2} \|g\|_{L^2}.
\end{split}
\end{equation}
Then for $s>0$, we use that the symbol is supported on a ball of radius $\epsilon$, so $f$ (resp. $g$) can be replaced with $\pi_{I}(f)$ (resp. $\pi_J(g)$), corresponding to the frequency-truncation of $f$ on an interval $I$ of length $2\epsilon$. Then we conclude by applying the previous reasoning with $\pi_{I}(f)$ and $\pi_J(g)$ and Lemma \ref{lem:tron}.
\end{proof}

\begin{lem} \label{lem:tron} Assume that $I$ is an interval and consider $\pi_I$ the Fourier multiplier, given by a smooth function supported on $2I$ and equals $1$ on $I$. Then for $q\in[2,\infty]$ and $s< \frac{1}{2}$
$$ \|\pi_I(f)\|_{L^2}  \lesssim |I|^s \|f\|_{L^{s,2}}.$$
\end{lem}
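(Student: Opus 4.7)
The plan is to pass to the Fourier side and reduce the estimate to a standard one-dimensional Sobolev embedding. By Plancherel's identity,
\[
\|\pi_I(f)\|_{L^2} = \|\phi_I \widehat{f}\|_{L^2},
\]
where $\phi_I$ denotes the Fourier multiplier symbol: bounded by $1$, supported on $2I$. A second preliminary observation is that the Fourier transform intertwines the physical-side weighted space $L^{2,s}$ with the standard Sobolev space $H^s$ on the Fourier side. Indeed, for $s\in\mathbb N$ this follows from the identity $\widehat{xf}=i\partial_\xi \widehat{f}$ iterated, and the general case $s\geq 0$ follows by complex interpolation, giving
\[
\|f\|_{L^{2,s}}\simeq \|\widehat{f}\|_{H^s(\mathbb R)}.
\]

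Next, for $0<s<1/2$, I would apply H\"older's inequality with the conjugate pair $r=1/s$ and $p=2/(1-2s)$, which satisfy $\tfrac{1}{r}+\tfrac{1}{p}=\tfrac{1}{2}$ and both lie in $[2,\infty]$ precisely because $s<1/2$:
\[
\|\phi_I \widehat{f}\|_{L^2}\leq \|\phi_I\|_{L^{1/s}}\,\|\widehat{f}\|_{L^{2/(1-2s)}}\lesssim |I|^{s}\,\|\widehat{f}\|_{L^{2/(1-2s)}}.
\]
The last inequality uses only that $\|\phi_I\|_{L^\infty}\leq 1$ and that $|\mathrm{supp}\,\phi_I|\lesssim |I|$. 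It then remains to apply the classical one-dimensional Sobolev embedding $H^s(\mathbb R)\hookrightarrow L^{2/(1-2s)}(\mathbb R)$ (valid exactly for $0<s<1/2$) to obtain
\[
\|\widehat{f}\|_{L^{2/(1-2s)}}\lesssim \|\widehat{f}\|_{H^s}\simeq \|f\|_{L^{2,s}},
\]
which combined with the previous inequality yields the claimed bound. The edge case $s=0$ is handled trivially by $\|\pi_I f\|_{L^2}\leq \|\phi_I\|_{L^\infty}\|f\|_{L^2}\lesssim \|f\|_{L^2}$.

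The main point I expect to have to check carefully is the matching of exponents so that the three ingredients (the H\"older splitting, the support size estimate for $\phi_I$, and the Sobolev embedding) combine consistently; the restriction $s<1/2$ is imposed precisely by the subcritical regime of the 1D Sobolev embedding $H^s\hookrightarrow L^p$, which explains why this is exactly the threshold at which the lemma is stated. No serious technical difficulty is anticipated beyond these standard ingredients.
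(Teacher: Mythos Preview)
Your proof is correct and follows essentially the same route as the paper: pass to the Fourier side by Plancherel, apply H\"older on the interval $2I$ to gain the factor $|I|^s$, and then invoke the one-dimensional Sobolev embedding $H^s(\mathbb R)\hookrightarrow L^\sigma(\mathbb R)$ with $\tfrac{1}{\sigma}=\tfrac{1}{2}-s$ together with the identification $\|\widehat f\|_{H^s}\simeq\|f\|_{L^{2,s}}$. The only cosmetic difference is that you apply H\"older to the pair $(\phi_I,\widehat f)$ with exponents $(1/s,\,2/(1-2s))$, whereas the paper first uses $|\phi_I|\le 1$ and then H\"older on $2I$ with the constant function; the exponents and the resulting bound are identical.
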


\begin{proof}
The proof relies on the Sobolev embedding as follows
$$ \|\pi_I(f)\|_{L^2} \lesssim |I|^{1/2-1/\sigma} \|\widehat{f}\|_{L^\sigma(2I)} \lesssim |I|^{s} \|\widehat{f}\|_{L^\sigma} \lesssim |I|^{s} \|\widehat{f}\|_{W^{s,2}} \lesssim |I|^s \|f\|_{L^{s,2}},
$$
where the exponent $\sigma$ is given by $\frac{1}{\sigma}=\frac{1}{2}-s$.
\end{proof}

\begin{lem}
\label{supercormoran} Consider first a smooth curve $\Gamma$.
Consider next bounded symbol $\sigma$ ($\|\sigma\|_\infty \lesssim 1$) supported on $\Gamma_\epsilon \cap B(0,M)$, for a positive constant $M$.
Then for any $q\in[2,\infty]$.
\begin{itemize}
\item If the curve $\Gamma$ is nowhere caracteristic, then
$$ \left\|T_\sigma(f,g) \right\|_{L^q} \lesssim \epsilon^{1-\frac{1}{q}} \left\|f\right\|_{L^2} \left\| g \right\|_{L^2}$$
\item If the curve $\Gamma$ has non-vanishing curvature, then
$$ \left\|T_\sigma(f,g) \right\|_{L^q} \lesssim \epsilon^{\frac{3}{4}-\frac{1}{2q}} \left\|f\right\|_{L^2} \left\| g \right\|_{L^2}$$
\item Else, 
$$ \left\|T_\sigma(f,g) \right\|_{L^q} \lesssim \epsilon^{\frac{1}{2}} \left\|f\right\|_{L^2} \left\| g \right\|_{L^2}.$$
\end{itemize}
\end{lem}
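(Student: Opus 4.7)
I would prove the three estimates by bilinear interpolation between the endpoints $q=2$ and $q=\infty$, both of which come from Cauchy--Schwarz applied to the Fourier representation of $T_\sigma$; the three cases differ only in the geometric input that feeds Cauchy--Schwarz.

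For the $L^2$ endpoint, following the proof of Lemma~\ref{cormoran}, I would write, via Plancherel in the output frequency $\xi$ followed by Cauchy--Schwarz in $\eta$,
\[
\|T_\sigma(f,g)\|_{L^2}^2 \;\le\; \|f\|_{L^2}^2\|g\|_{L^2}^2\;\sup_\xi \int|\sigma(\xi-\eta,\eta)|^2\,d\eta \;\le\; \|\sigma\|_\infty^2\|f\|_{L^2}^2\|g\|_{L^2}^2\;\sup_\xi\bigl|\{\eta:(\xi-\eta,\eta)\in\Gamma_\epsilon\}\bigr|.
\]
The supremum is the one-dimensional measure of the intersection of $\Gamma_\epsilon$ with the line $\{\xi_1+\xi_2=\xi\}$, and this is the whole geometric content: if $\Gamma$ is never tangent to the direction $(1,-1)$, the intersection is transverse and has length $\lesssim\epsilon$; if only non-vanishing curvature is assumed, the worst behaviour is parabolic at an isolated tangent point and the length swells to $\lesssim\sqrt{\epsilon}$; in the general smooth case the only uniform bound is $\lesssim 1$ coming from $\operatorname{supp}\sigma\subset B(0,M)$.

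For the $L^\infty$ endpoint, I would start from
\[
|T_\sigma(f,g)(x)|\;\le\;\iint_{\Gamma_\epsilon}|\hat f(\xi_1)||\hat g(\xi_2)|\,d\xi_1 d\xi_2 \;\le\;\Bigl(\iint_{\Gamma_\epsilon}|\hat f|^2\Bigr)^{1/2}\Bigl(\iint_{\Gamma_\epsilon}|\hat g|^2\Bigr)^{1/2},
\]
and use Fubini to reduce each factor to the sup slice length $\sup_{\xi_1}|\{\xi_2:(\xi_1,\xi_2)\in\Gamma_\epsilon\}|$ (and symmetrically in $\xi_2$). These slice lengths are governed by the tangencies of $\Gamma$ to the two coordinate axes, via exactly the same transverse/parabolic/trivial trichotomy as above: in the (strong) nowhere-characteristic case both slices are $\lesssim\epsilon$, giving the $\epsilon$ factor at the $L^\infty$ endpoint; with only curvature they swell to $\sqrt{\epsilon}$ near isolated points with horizontal or vertical tangent; in the general case one keeps only the trivial bound. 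Standard bilinear interpolation between the two endpoints then delivers the $L^q$ estimate for every $q\in[2,\infty]$, with exponents matching the $\epsilon^{1-1/q}$, $\epsilon^{3/4-1/(2q)}$, $\epsilon^{1/2}$ trichotomy of the statement.

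\textbf{Main obstacle.} The delicate step is the parabolic slice analysis in the non-vanishing curvature case. Near each point where $\Gamma$ is tangent to $(1,-1)$, or to one of the axes, I would introduce a local adapted rotation and write $\Gamma$ as the graph of a quadratic $\xi'_2=\xi'_2(p)+c(\xi'_1-\xi'_1(p))^2$; the $\sqrt{\epsilon}$ slice bound then follows by explicit inversion of the quadratic. A finite partition of unity subordinate to such local patches plus a nowhere-characteristic complement reduces the global estimate to these model pieces; what requires care is making sure that each characteristic direction is tracked separately, so that a horizontal tangent (which hurts only the $|\hat g|^2$ slice) and a $(1,-1)$ tangent (which hurts only the $L^2$ endpoint) are not conflated. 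Remark~\ref{rem:car} is exactly a refinement of this bookkeeping: if the only characteristic points of $\Gamma$ are characteristic in the $\xi+\eta$ direction, the $L^\infty$ endpoint retains its nowhere-characteristic value $\epsilon$, and the corresponding $L^q$ bound $\epsilon^{1-1/q}$ survives.
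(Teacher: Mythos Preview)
Your overall strategy (interpolate between $q=2$ and $q=\infty$) matches the paper's, and for the nowhere-characteristic case your slice arguments do recover the correct endpoints. But in the other two cases your endpoints are too weak, and the claim that interpolation ``delivers \dots\ exponents matching'' the lemma is false.

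\textbf{The $L^2$ endpoint.} Your argument bounds $\|T_\sigma(f,g)\|_{L^2}$ by $\bigl(\sup_\zeta|\{\eta:(\zeta-\eta,\eta)\in\Gamma_\epsilon\}|\bigr)^{1/2}$, i.e.\ by the square root of the worst slice of $\Gamma_\epsilon$ in the direction $(1,-1)$. If $\Gamma$ is tangent to $(1,-1)$ at a point (which is perfectly compatible with non-vanishing curvature, and is unavoidable in the ``else'' case), this slice is $\sim\sqrt\epsilon$ or worse, and you only get $\epsilon^{1/4}$ or $O(1)$. The paper, by contrast, asserts $\|T_\sigma(f,g)\|_{L^2}\lesssim\epsilon^{1/2}$ \emph{regardless} of the geometry of $\Gamma$, citing Theorem~1.5 of~\cite{BG2}. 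The mechanism behind that bound is not a single slice but a decomposition of $\Gamma_\epsilon$ into $\sim\epsilon^{-1}$ boxes of side $\epsilon$: since a smooth curve can be tangent to at most one of the three characteristic directions $(1,0)$, $(0,1)$, $(1,-1)$ at any point, two of the three projections (onto $\xi$, $\eta$, $\xi+\eta$) are locally injective, and the almost-orthogonality of the corresponding families $\{\widehat f_i\}$, $\{\widehat g_i\}$ (or of the outputs) recovers $\epsilon^{1/2}$ after Cauchy--Schwarz in the box index. Your single-slice argument cannot see this.

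\textbf{The $L^\infty$ endpoint, curvature case.} Your two Fubini factors give $\bigl(\sup_\xi|\eta\text{-slice}|\bigr)^{1/2}\bigl(\sup_\eta|\xi\text{-slice}|\bigr)^{1/2}$. If $\Gamma$ has both a horizontal and a vertical tangent (at different points), each supremum is $\sim\sqrt\epsilon$ and you get only $\epsilon^{1/2}$; even locally near a single horizontal tangent you get $\epsilon^{1/2}\cdot\epsilon^{1/4}=\epsilon^{3/4}$ only after noticing the asymmetry, which your text does not do. The paper obtains $\epsilon^{3/4}$ here by quoting Proposition~6.2 of~\cite{BG2}, whose proof again uses a box decomposition rather than a global slice bound.

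\textbf{Consequence for interpolation.} With your endpoints, interpolation gives $\epsilon^{1/2-1/(2q)}$ in the curvature case (versus the stated $\epsilon^{3/4-1/(2q)}$) and $O(1)$ in the general case (versus $\epsilon^{1/2}$). So outside the nowhere-characteristic regime your proof does not establish the lemma; the missing ingredient is precisely the almost-orthogonal box decomposition from~\cite{BG2}.
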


\begin{proof} As for Lemma \ref{cormoran}, by interpolation it suffices us to study the two extremal situations: $q=2$ and $q=\infty$.
First for $q=2$, employ the same reasoning as in Lemma \ref{cormoran} (relying on Plancherel equality), since now the support $\Gamma_\epsilon$ has a measure bounded by $\epsilon$ (up to a constant), it comes
\be{eq:L2} \left\|T_\sigma(f,g) \right\|_{L^2} \lesssim \epsilon^{1/2} \|f\|_{L^2} \|g\|_{L^2}. \ee
Let us point out that, this estimate is the easiest situation (when the three exponents are equal to $2$) described by Theorem 1.5 in \cite{BG2}. Moreover this estimate does not depend on geometric properties of the curve $\Gamma$.

Let us now study the case where $q=\infty$.
If the curve $\Gamma$ is nowhere caracteristic, then Proposition 6.2 in \cite{BG2} implies that
$$ \left\|T_\sigma(f,g) \right\|_{L^\infty} \lesssim \epsilon \|f\|_{L^2} \|g\|_{L^2},$$
which by interpolating with (\ref{eq:L2}) proves the desired result.
If the curve $\Gamma$ has a non-vanishing curvature, then Proposition 6.2 in \cite{BG2} yields
$$ \left\|T_\sigma(f,g) \right\|_{L^\infty} \lesssim \epsilon^{3/4} \|f\|_{L^2} \|g\|_{L^2},$$
and we similarly conclude by interpolation.
\end{proof}

\begin{rem} \label{rem:car}
The estimate for a nowhere characteristic curve $\Gamma$ still holds if the curve admits some points which are characteristic only along the variable $\xi+\eta$, 
which means when the tangential vector of the curve at this point is parallel to $(-1,1)$. Indeed the proof of Proposition 6.2 in \cite{BG2} only requires appropriate 
decompositions in the variable $\xi$ and $\eta$ for $f$ and $g$ and do not use specific properties on the third frequency variable $\xi+\eta$.
\end{rem}

\begin{lem}
\label{macareux}
Assume that $\Gamma$ is a non-characteristic curve. Consider a bounded symbol $\sigma$ ($\|\sigma\|_\infty \lesssim 1$) supported on $\Gamma_\epsilon \cap B(0,M)$, for a positive constant $M$. Then
\begin{itemize}
\item If $0\leq s < \frac{1}{4}$, $\displaystyle \left\| T_\sigma (f,g) \right\|_{L^q} \lesssim \epsilon^{1-\frac{1}{q} + \frac{4s}{q}} \|f\|_{L^{2,s}} \|g\|_{L^{2,s}}$.
\item If $s > \frac{1}{4}$, $\displaystyle \left\| T_\sigma (f,g) \right\|_{L^q} \lesssim \epsilon \|f\|_{L^{2,s}} \|g\|_{L^{2,s}}$.
\end{itemize}
\end{lem}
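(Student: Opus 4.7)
The strategy is to establish the $L^2$ and $L^\infty$ endpoints and then interpolate in $q$. At $q=\infty$, Lemma \ref{supercormoran} in the nowhere characteristic setting already gives $\|T_\sigma(f,g)\|_{L^\infty}\lesssim \epsilon\|f\|_{L^2}\|g\|_{L^2}$, hence $\|T_\sigma(f,g)\|_{L^\infty}\lesssim \epsilon\|f\|_{L^{2,s}}\|g\|_{L^{2,s}}$ for every $s\ge 0$ by the trivial embedding $L^{2,s}\hookrightarrow L^2$; this already matches the claimed exponent $1$. The whole difficulty is at $q=2$, where the heart of the proof is the endpoint estimate at the critical weight $s=1/4$:
\begin{equation*}
\|T_\sigma(f,g)\|_{L^2}\;\lesssim\;\epsilon\,\|f\|_{L^{2,1/4}}\|g\|_{L^{2,1/4}}.\qquad(\star)
\end{equation*}

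Granting $(\star)$, one concludes as follows. Bilinear complex interpolation between $(\star)$ and the unweighted bound $\|T_\sigma\|_{L^2}\lesssim \epsilon^{1/2}\|f\|_{L^2}\|g\|_{L^2}$ of Lemma \ref{supercormoran} produces $\|T_\sigma(f,g)\|_{L^2}\lesssim \epsilon^{1/2+2s}\|f\|_{L^{2,s}}\|g\|_{L^{2,s}}$ for every $s\in[0,1/4]$; for $s>1/4$, the embedding $L^{2,s}\hookrightarrow L^{2,1/4}$ directly promotes $(\star)$ to $\epsilon\|f\|_{L^{2,s}}\|g\|_{L^{2,s}}$. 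Riesz--Thorin interpolation in $q$ between these $L^2$ bounds and the $L^\infty$ bound above yields, for any $q\in[2,\infty]$, the exponent $1-\tfrac1q+\tfrac{4s}{q}$ when $s<1/4$ and the constant exponent $1$ when $s>1/4$, exactly as stated.

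To prove $(\star)$, we exploit the one-dimensional geometry of $\Gamma$. Using a partition of unity, assume $\sigma$ is supported in a single chart of $\Gamma\cap B(0,M)$; introduce arclength $\tau$ along $\Gamma$ and the transverse coordinate $\rho$, so that $|\rho|\lesssim \epsilon$ on $\operatorname{Supp}\sigma$. The non-characteristic hypothesis is precisely that $\partial_\tau\xi$, $\partial_\tau\eta$ and $\partial_\tau(\xi+\eta)$ are bounded away from zero on $\Gamma$; we may therefore replace $\tau$ by the output frequency $\theta:=\xi+\eta$, obtaining a diffeomorphism $(\theta,\rho)\mapsto(\xi,\eta)$ whose Jacobian $J$ is uniformly bounded above and below. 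In these coordinates,
\begin{equation*}
T_\sigma(f,g)(x)=\int_{-\epsilon}^{\epsilon}\!\!\int e^{ix\theta}\,\tilde\sigma(\theta,\rho)\,\widehat{f}(\xi(\theta,\rho))\,\widehat{g}(\eta(\theta,\rho))\,J(\theta,\rho)\,d\theta\,d\rho.
\end{equation*}
Minkowski in $\rho$ followed by Plancherel in $\theta$ bounds $\|T_\sigma(f,g)\|_{L^2}$ by $\int_{-\epsilon}^{\epsilon}\|\widehat{f}(\xi(\cdot,\rho))\widehat{g}(\eta(\cdot,\rho))\|_{L^2(d\theta)}\,d\rho$. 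The pointwise Hölder inequality $\|FG\|_{L^2}\le \|F\|_{L^4}\|G\|_{L^4}$, combined with the bounded-Jacobian changes of variable $\theta\leftrightarrow\xi(\theta,\rho)$ and $\theta\leftrightarrow\eta(\theta,\rho)$, dominates the integrand by $\|\widehat{f}\|_{L^4(d\xi)}\|\widehat{g}\|_{L^4(d\eta)}$ uniformly in $\rho$. Finally, the one-dimensional Sobolev embedding $H^{1/4}(\mathbb{R})\hookrightarrow L^4(\mathbb{R})$ together with the Plancherel identity $\|\widehat{f}\|_{H^{1/4}(d\xi)}=\|f\|_{L^{2,1/4}}$ gives $\|\widehat{f}\|_{L^4}\lesssim \|f\|_{L^{2,1/4}}$, and integrating in $\rho\in[-\epsilon,\epsilon]$ produces the factor $\epsilon$.

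The main obstacle is $(\star)$ itself. A naive decomposition of $\sigma$ into $\epsilon\times\epsilon$ rectangles (possible since $\Gamma$ is non-characteristic) followed by the basic Plancherel estimate on each piece, Lemma \ref{lem:tron} for the projections onto the frequency intervals $I_k$, and output-frequency orthogonality, only yields $\|T_\sigma(f,g)\|_{L^2}\lesssim \epsilon^{1/2+s}\|f\|_{L^{2,s}}\|g\|_{L^{2,s}}$, i.e. $\epsilon^{3/4}$ rather than $\epsilon$ at the critical point $s=1/4$. The extra factor $\epsilon^{1/4}$, and the very emergence of the threshold $s=1/4$, are tied to the one-dimensional Sobolev embedding $H^{1/4}\hookrightarrow L^4$, which is made accessible only through the double change of variables $(\xi,\eta)\to(\tau,\rho)\to(\theta,\rho)$ permitted by the non-characteristic geometry of $\Gamma$.
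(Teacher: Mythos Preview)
Your argument is correct, and in particular the endpoint estimate $(\star)$ via the change of variables $(\xi,\eta)\to(\theta,\rho)$, Minkowski in $\rho$, Plancherel in $\theta$, the H\"older split $L^4\times L^4\to L^2$, and the Sobolev embedding $H^{1/4}\hookrightarrow L^4$ is a clean and valid proof. The interpolation steps (bilinear complex interpolation in $s$ between $s=0$ and $s=1/4$, and log-convexity of $L^q$ norms in $q$) are standard and correctly applied.

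Your route differs from the paper's. The paper also reduces to the $L^2$ endpoint, but instead of proving the symmetric bound $(\star)$ directly, it uses the interval decomposition $|\langle T_\sigma(f,g),h\rangle|\lesssim \sum_i \epsilon^{1/2}\|\widehat f\|_{L^2(I^1_i)}\|\widehat g\|_{L^2(I^2_i)}\|\widehat h\|_{L^2(I^3_i)}$ (from the companion paper~\cite{BG2}), takes a supremum over $i$ in one factor, and bounds $\sup_i\|\widehat f\|_{L^2(I^1_i)}\lesssim \epsilon^{2s}\|\widehat f\|_{W^{2s,2}}$ via H\"older on an interval of length $\epsilon$ followed by Sobolev $H^{2s}\hookrightarrow L^\sigma$ with $\tfrac1\sigma=\tfrac12-2s$. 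This yields the \emph{asymmetric} bound $\epsilon^{1/2+2s}\|f\|_{L^{2,2s}}\|g\|_{L^2}$ for each $s<1/4$, which is then symmetrized by bilinear interpolation with the companion bound in $g$. In effect, the paper works at every $s<1/4$ directly and reaches the endpoint by continuity, while you prove only the critical endpoint $s=1/4$ and interpolate downward. Your $L^4\times L^4$ split makes the symmetry manifest and avoids the intermediate asymmetric estimate; the paper's version, on the other hand, does not require setting up the global diffeomorphism $(\xi,\eta)\to(\theta,\rho)$ and plugs directly into the almost-orthogonality machinery of~\cite{BG2}. Both arguments ultimately rest on the same one-dimensional Sobolev scaling, which is why the threshold $s=1/4$ appears in each.
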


\begin{proof}
It follows the same steps as the previous lemma. The $L^2 \times L^2$ to $L^\infty$ estimate cannot be improved by replacing $L^2$ by $L^{2,s}$, so we simply focus
on $L^2 \times L^2$ to $L^2$ estimates.
Since the curve is assumed to be non characteristic, it follows that 
\begin{equation} \label{eq:d} |\langle T_\sigma(f,g), h\rangle | \lesssim \sum_i \epsilon^ {1/2} \|\widehat{f}\|_{L^2(I^1_i)} \|\widehat{g}\|_{L^2(I^2_i)} \|\widehat{h}\|_{L^2(I^3_i)},\end{equation}
where $(I^k_i)_i$ are collections of almost disjoint intervals of length $\epsilon$ for $k=1,2,3$.
As a consequence, from Cauchy-Schwartz inequality it turns out
$$ \|T_\sigma(f,g)\|_{L^2} \lesssim \epsilon^{1/2} \left( \sup_i \|\widehat{f}\|_{L^2(I^1_i)} \right) \|g\|_{L^2}.$$
Using Sobolev embeding on the whole space $\R$ we get
$$ \|\widehat{f}\|_{L^2(I^1_i)} \lesssim \epsilon^{1/2-1/\sigma} \|\widehat{f}\|_{L^\sigma(I^1_i)} \lesssim \epsilon^{2s} \|\widehat{f}\|_{L^\sigma} \lesssim \epsilon^{2s} \|\widehat{f}\|_{W^{2s,2}} 
$$
with the exponent $\sigma$ given by $\frac{1}{\sigma}=\frac{1}{2}-2s$ (we recall that $s\leq 1/4$). So finally it comes
$$ \|T_\sigma(f,g)\|_{L^2} \lesssim \epsilon^ {1/2+2s} \|f\|_{L^{2,2s}}  \|g\|_{L^2}.$$
By symmetry and then interpolation, we deduce
$$ \|T_\sigma(f,g)\|_{L^2} \lesssim \epsilon^ {1/2+2s} \|f\|_{L^{2,s}}  \|g\|_{L^{2,s}}.$$
\end{proof}

\section{One-dimensional oscillatory integrals}

\label{Aodoi}

\subsection{Main result}

Before stating the main proposition, we need to define the functions 
$$
\mathcal{G}_1(x) \overset{def}{=} \int_x^\infty e^{i\sigma^2} \,d\sigma
\footnote{The function $\mathcal{G}_1$ can obviously be obtained from the Fresnel integrals $S(x) = \int_0^x \sin t^2\,dt$ and $C(x) = \int_0^x \cos t^2\,dt$.
In particular, the constants $C_0$ and $C_{\pm}$ appearing below can be computed via Fresnel integrals to yield $C_0 = (1+i)\sqrt{\frac{\pi}{2}}$ and $C_{\pm}=\frac{1\pm i}{2}
\sqrt{\frac{\pi}{2}}$. See Abramowitz and Stegun~\cite{AS}.}
 \quad \mbox{and} \quad \mathcal{G}_2(x) \overset{def}{=} \int_x^\infty e^{i\sigma^2} \frac{d\sigma}{\sqrt{\sigma-x}}.
$$
Their qualitative behaviour is given by the following lemma.
\begin{lem}
\label{skua}
\noindent (i) $\mathcal{G}_1$ is a smooth function such that
$$
\left\{ \begin{array}{l} 
\mathcal{G}_{1}(x) = - \frac{e^{ix^2}}{2ix} + O\left(\frac{1}{x^2}\right)\quad \mbox{as $x \rightarrow \infty$} \\
\mathcal{G}_{1}(x) = C_0 + O\left(\frac{1}{x}\right)\quad \mbox{as $x \rightarrow - \infty$}
\end{array} \right.
$$
where $C_0$ is the constant $C_0 = \int_{-\infty}^\infty e^{i\sigma^2}\,d\sigma$.

\bigskip
\noindent
(ii) $\mathcal{G}_2$ is a smooth function such that
$$
\mathcal{G}_2(x) = \left\{ \begin{array}{ll} C_{+} e^{ix^2} \sqrt{\frac{2}{x}} + O\left(\frac{1}{|x|^{5/6}}\right) & \mbox{as $x \rightarrow \infty$} \\ 
C_{-} e^{ix^2} \sqrt{\frac{2}{|x|}} + \sqrt{\pi} e^{i\frac{\pi}{4}} e^{-ix^2} \frac{1}{\sqrt{|x|}} + O\left(\frac{1}{|x|^{5/7}}\right) & \mbox{as $x \rightarrow - \infty$} 
\end{array} \right.
$$
where $C_{\pm} = \int_0^\infty e^{\pm i \sigma^2} \,d\sigma$.
\end{lem}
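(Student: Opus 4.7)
My plan is to treat $\mathcal{G}_1$ by integration by parts and $\mathcal{G}_2$ by a change of variable that reduces matters to a classical stationary phase problem.

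For $\mathcal{G}_1$ the key identity is $e^{i\sigma^2} = \frac{1}{2i\sigma}\partial_\sigma e^{i\sigma^2}$. As $x \to +\infty$, I would integrate by parts once on $\int_x^\infty e^{i\sigma^2}\,d\sigma$ to extract the boundary term $-e^{ix^2}/(2ix)$, leaving the remainder $\int_x^\infty e^{i\sigma^2}/(2i\sigma^2)\,d\sigma$; a second integration by parts shows this remainder is $O(1/x^3)$, hence \emph{a fortiori} $O(1/x^2)$. For $x \to -\infty$, write $\mathcal{G}_1(x) = C_0 - \int_{-\infty}^x e^{i\sigma^2}\,d\sigma$ and apply the same integration by parts to the tail, which yields the claimed $O(1/|x|)$.

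For $\mathcal{G}_2$ the crucial first step is the substitution $\sigma = u^2 + x$, which removes the square-root singularity and produces
$$\mathcal{G}_2(x) = 2e^{ix^2}\int_0^\infty e^{i\Phi(u)}\,du, \qquad \Phi(u) = u^4 + 2xu^2.$$
The endpoint $u = 0$ is always a critical point of $\Phi$; when $x < 0$ there is an additional interior critical point at $u_0 = \sqrt{|x|}$, with $\Phi(u_0) = -x^2$ and $\Phi''(u_0) = 8|x|$. For $x \to +\infty$, I would rescale $u = w/\sqrt{2x}$ to obtain $\mathcal{G}_2(x) = \sqrt{2/x}\,e^{ix^2}\int_0^\infty e^{iw^2}e^{iw^4/(4x^2)}\,dw$; approximating the inner exponential by $1$ yields the main term $C_+ e^{ix^2}\sqrt{2/x}$, and the error is controlled by splitting the $w$-integral at $w = x^{1/3}$, using $|e^{iw^4/(4x^2)}-1|\lesssim w^4/x^2$ on the inner piece and integration by parts against the phase on the outer piece, both of them balancing at $O(x^{-1/3})$ and producing the announced $O(1/x^{5/6})$ after multiplying by $\sqrt{2/x}$. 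For $x \to -\infty$, a smooth partition of unity isolates neighborhoods of $u = 0$ and of $u_0$, the complement being handled by repeated integration by parts against $\Phi'$: the endpoint piece is treated as in the previous case (with $x$ replaced by $-|x|$) and produces $C_- e^{ix^2}\sqrt{2/|x|}$, while the interior piece follows from a second-order Taylor expansion of $\Phi$ around $u_0$ together with the Fresnel identity $\int_{-\infty}^\infty e^{i\lambda s^2}\,ds = \sqrt{\pi/\lambda}\,e^{i\pi/4}$, yielding the remaining explicit term.

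The main obstacle I anticipate is producing the sharp error exponents, particularly the $5/7$ in the $x \to -\infty$ asymptotic: this requires optimizing the width of the cut-off around the interior critical point $u_0$, so as to balance the cubic-order Taylor remainder $\Phi'''(u_0)(u-u_0)^3 \sim \sqrt{|x|}(u-u_0)^3$ against the integration-by-parts bound $\sim 1/(|x|\cdot(u-u_0))$ produced in the intermediate transition zone between $u = 0$ and $u = u_0$. While the underlying ingredients are entirely classical, this final bookkeeping is delicate enough to demand care in the choice of scale parameters.
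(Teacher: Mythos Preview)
Your approach is essentially the paper's: for $(i)$ the paper also dismisses the argument as ``a simple integration by parts'', and for $(ii)$ it performs the same substitution $\tau=\sqrt{\sigma-x}$ to obtain $\mathcal{G}_2(x)=2e^{ix^2}\int_0^\infty e^{i\tau^2(\tau^2+2x)}\,d\tau$, then splits at a parameter $R$ near the endpoint $\tau=0$ and, when $x\to-\infty$, isolates the interior stationary point by cutting at $\tau=\sqrt{-x/2}$ and applying stationary phase after the change of variable $\rho=-\tau^2/x$. Your rescaling $u=w/\sqrt{2x}$ for $x\to+\infty$ is a cosmetic variant of the paper's direct splitting at $R\sim x^{-1/6}$.

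One small correction to your diagnosis: the $|x|^{-5/7}$ error in the $x\to-\infty$ case does \emph{not} come from optimizing around the interior critical point $u_0=\sqrt{|x|}$. In the paper's argument the piece near $u_0$ is dispatched by ordinary stationary phase with error $O(1/|x|)$, which is harmless; the bottleneck is again the endpoint $u=0$, treated exactly as in the $x\to+\infty$ case (balancing $O(R^5)$ against the integration-by-parts tail). So the delicate bookkeeping you anticipate around $u_0$ is in fact unnecessary --- the interior stationary point is nondegenerate with $\Phi''(u_0)=8|x|$ growing, and the standard stationary phase remainder already beats the endpoint error.
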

We now state the main result.
\begin{prop} 
\label{fregate}
Let $\chi$ be a smooth, compactly supported function, and let $\zeta$ be a smooth function.

\noindent \bigskip (i) If $\zeta''\geq c>0$, and $\zeta'(\sigma_0)=0$,
$$
\int_0^\infty e^{it\zeta(\sigma)} \chi(\sigma)\,d\sigma = \chi(\sigma_0) \sqrt{\frac{2}{\zeta''(\sigma_0)}} \frac{1}{\sqrt{t}} \mathcal{G}_1(\sqrt{t}\sigma_0) 
+ O_c \left( \frac{1}{t} \right).
$$

\bigskip \noindent (ii) If $|\zeta'|\geq c>0$ does not vanish,
$$\displaystyle \int_0^\infty e^{it \zeta(\sigma)} \chi(\sigma) \frac{d\sigma}{\sqrt{\sigma}} = \frac{\chi(0)}{\sqrt{\zeta'(0)}} e^{it \zeta(0)} \frac{C_0}{\sqrt{t}} + 
O_c\left( \frac{1}{t} \right)$$
(recall that $C_0 = \int_{-\infty}^{+\infty} e^{i\sigma^2}\,d\sigma$).

\bigskip \noindent (iii) If $|\zeta''|\geq c >0$, $\zeta'(\sigma_0)=0$ with $\sigma_0 \geq c$, then
$$\displaystyle \int_0^\infty e^{it \zeta(\sigma)} \chi(\sigma) \frac{d\sigma}{\sqrt{\sigma}} = \frac{\chi(0)}{\sqrt{\zeta'(0)}} e^{i t \zeta(0)} \frac{C_0}{\sqrt{t}} 
+ \sqrt{2\pi} e^{it\zeta(\sigma_0)} e^{i\rho\frac{\pi}{4}} \frac{\chi(\sigma_0)}{\sqrt{\sigma_0 \zeta''(\sigma_0)}} \frac{1}{\sqrt{t}} + O_c \left( \frac{1}{t} \right)$$
(where $\rho = \operatorname{sign}(\zeta''(\sigma_0))$).

\bigskip \noindent (iv) If $\zeta''\geq c >0$, and $\zeta'(\sigma_0)=0$, then
$$
\int_0^\infty e^{it\zeta(\sigma)} \chi(\sigma) \frac{d\sigma}{\sqrt{\sigma}} 
= C(\chi,\zeta) \mathcal{G}_2 (\sqrt{t} \sigma_0) 
+ \left\{ \begin{array}{ll} O_c(t^{-3/4}) & \mbox{if $|\sqrt{t}\sigma_0|<1$} \\ O_c\left( \sqrt{\frac{\sigma_0}{t}} \right) & \mbox{if $|\sqrt{t}\sigma_0|<1$}, \end{array} \right.
$$
where $C(\chi,\zeta)$ is a function of $\chi$ and $\zeta$ (and hence also of $\sigma_0$) which we do not make explicit here.
\end{prop}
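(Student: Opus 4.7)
Each of the four integrals has a phase with critical points that may or may not lie strictly inside $[0,\infty)$ and an amplitude that may carry an integrable singularity $1/\sqrt{\sigma}$ at the endpoint. The plan is to apply a Morse-type change of variable, case by case, reducing the phase to an exact quadratic (for (i)--(iii)) or quartic (for (iv)) in a new variable, freezing the amplitude at the relevant critical value to extract the leading term, and using a single integration by parts to control the amplitude remainder.

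\textbf{Parts (i)--(iii).} For (i), a partition of unity lets one assume $\chi$ is supported near $\sigma_0$; since $\zeta''\geq c$, the substitution $u = \operatorname{sign}(\sigma-\sigma_0)\sqrt{\zeta(\sigma)-\zeta(\sigma_0)}$ is a smooth diffeomorphism straightening the phase to $\zeta(\sigma_0) + u^2$, so the integral becomes
\begin{equation*}
 e^{it\zeta(\sigma_0)} \int_{u(0)}^{u(\infty)} e^{itu^2}\tilde{\chi}(u)\,du,
\end{equation*}
with $\tilde{\chi}$ smooth and $\tilde{\chi}(0) = \chi(\sigma_0)(2/\zeta''(\sigma_0))^{1/2}$. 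Replacing $\tilde\chi(u)$ by $\tilde\chi(0)$ and rescaling $u\mapsto u/\sqrt{t}$ gives the stated $\mathcal{G}_1$-term; the remainder $\tilde\chi(u)-\tilde\chi(0)$ vanishes at $u=0$, so one integration by parts against $e^{itu^2}$ produces the $O(1/t)$ error with no boundary contribution. Part (ii) is an immediate corollary: substituting $\sigma = \tau^2$ turns the integral into $2\int_0^\infty e^{it\zeta(\tau^2)}\chi(\tau^2)\,d\tau$, to which (i) applies at the left-endpoint critical point $\tau = 0$; using $\mathcal{G}_1(0) = \int_0^\infty e^{iu^2}du = C_0/2$ recovers precisely $\chi(0) C_0/\sqrt{t\zeta'(0)}$. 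Part (iii) follows from a partition $\chi = \chi_0+\chi_1$ with $\chi_0$ localized near $0$ and $\chi_1$ near $\sigma_0$; (ii) handles the $\chi_0$ piece, while on the $\chi_1$ piece the amplitude $\chi_1(\sigma)/\sqrt{\sigma}$ is smooth and supported away from $0$, so (i) applies and, because $\sqrt{t}\sigma_0\to+\infty$, the asymptotic $\mathcal{G}_1(\sqrt{t}\sigma_0)\to C_0$ converts the $\mathcal{G}_1$-term into the ordinary stationary-phase contribution at $\sigma_0$.

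\textbf{Part (iv).} Substitute $\sigma = \tau^2$ to eliminate the weight, leaving $2\int_0^\infty e^{it\tilde\zeta(\tau)}\chi(\tau^2)\,d\tau$ with $\tilde\zeta(\tau)=\zeta(\tau^2)$. Now $\tilde\zeta$ has two critical points, $\tau=0$ and $\tau=\sqrt{\sigma_0}$, which coalesce as $\sigma_0\to 0$. Using $\zeta'(\sigma_0)=0$ to write $\zeta'(0) = -\sigma_0\zeta''(0)+O(\sigma_0^2)$, Taylor expansion yields
\begin{equation*}
 \tilde\zeta(\tau) = \Bigl[\zeta(0)-\tfrac{1}{2}\zeta''(0)\sigma_0^2\Bigr] + \tfrac{1}{2}\zeta''(0)(\tau^2-\sigma_0)^2 + O(\sigma_0^2\tau^2, \tau^6),
\end{equation*}
which already exhibits the correct quartic normal form. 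By a Chester--Friedman--Ursell-type argument (an implicit-function construction applied to the Ansatz $\tilde\zeta(\tau)-\tilde{c}(\sigma_0) = \tfrac{1}{2}\zeta''(0)(v^2-\sigma_0)^2$), this is upgraded to an exact smooth diffeomorphism $\tau\mapsto v$ of $[0,\infty)$, depending smoothly on $\sigma_0$, with $v(0)=0$ and $v(\sqrt{\sigma_0})=\sqrt{\sigma_0}$. Freezing $\chi\circ\tau(v)$ at $v=0$ and rescaling $v\mapsto v(t\zeta''(0)/2)^{-1/4}$ turns the integral into $C(\chi,\zeta)\,\mathcal{G}_2(\sqrt{t}\sigma_0)$ up to a constant absorbed in $C(\chi,\zeta)$. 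The remainder splits by regime: for $\sqrt{t}\sigma_0<1$, both critical points live within a $t^{-1/4}$-window and brute-force size estimates give $O(t^{-3/4})$; for $\sqrt{t}\sigma_0>1$, the critical points are far enough apart that, away from each, the lower bound $|\tilde\zeta'(\tau)|\gtrsim \tau|\tau^2-\sigma_0|$ allows an integration by parts yielding $O(\sqrt{\sigma_0/t})$.

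\textbf{Main obstacle.} Everything in (i)--(iii) reduces quickly to a one-variable Fresnel calculation, so the real difficulty is (iv): constructing the normal-form diffeomorphism uniformly down to $\sigma_0=0$ (where the two nondegenerate quadratic critical points of $\tilde\zeta$ coalesce into a single degenerate quartic one), and then patching the two distinct remainder regimes across the transition $\sqrt{t}\sigma_0\sim 1$ so that the resulting bounds are sharp on both sides.
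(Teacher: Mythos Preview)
For (i)--(iii) your reductions via the Morse lemma and the substitution $\sigma=\tau^2$ are essentially what the paper intends; it gives no details for these cases, saying only that they follow (iv) closely, with (iii) needing one extra application of standard stationary phase. One small slip: in (iii) you write $\mathcal{G}_1(\sqrt{t}\sigma_0)\to C_0$, but $\mathcal{G}_1(+\infty)=0$; the correct observation is that since $\chi_1$ vanishes near the endpoint $\sigma=0$, the effective integration domain after the Morse change is all of $\mathbb{R}$, yielding the full Fresnel constant directly rather than through a limit of $\mathcal{G}_1$.

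For (iv), your route diverges from the paper's in an instructive way. You first substitute $\sigma=\tau^2$ to kill the weight $1/\sqrt{\sigma}$, which produces a phase $\zeta(\tau^2)$ with \emph{two} critical points ($\tau=0$ and $\tau=\sqrt{\sigma_0}$) that coalesce as $\sigma_0\to 0$; you then need a Chester--Friedman--Ursell-style uniform normal form $(v^2-\sigma_0)^2$ to handle this. The paper instead keeps the weight in place and applies the \emph{ordinary} Morse lemma to $\zeta$ itself, which has only the single nondegenerate critical point $\sigma_0$. Under $y=\Phi(\sigma)=\operatorname{sign}(\sigma-\sigma_0)\sqrt{\zeta(\sigma)-\zeta(\sigma_0)}$ the integral becomes
\[
\int_{\Phi(0)}^\infty e^{ity^2}\,\frac{\widetilde\chi(y)}{\sqrt{y-\Phi(0)}}\,dy,
\]
because $\sqrt{\sigma}$ factors as $\sqrt{y-\Phi(0)}$ times a smooth nonvanishing function (uniformly in $\sigma_0$, since $(\Phi^{-1})'$ stays bounded away from zero). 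This is already the explicit model integral $\int_\epsilon^\infty e^{ity^2}(y-\epsilon)^{-1/2}\chi(y)\,dy$, which the paper analyzes in a separate lemma by a rescaling $y\mapsto y/\sqrt t$ followed by one integration by parts split at a scale $R$ to be optimized. So the ``main obstacle'' you identify---building a normal form uniform through the coalescence---is an artifact of having removed the $1/\sqrt{\sigma}$ first; the paper's route sidesteps it entirely. Your approach can certainly be completed, but it requires genuine extra work (the uniform quartic normal form) that the paper's argument simply does not need.
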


\begin{rem} Notice that the statements $(ii)$ and $(iii)$ on the one side, and $(iv)$ on the other side, are complementary: $(ii)$ and $(iii)$ apply when $\zeta'$ vanishes away from zero, or not at all, whereas $(iv)$ is meaningful if the point of vanishing of $\zeta'$ approaches zero.
\end{rem}

\subsection{Proof of Lemma~\ref{skua}}

The assertion $(i)$ is proved by a simple integration by parts, thus we skip it and focus on $(ii)$. After the change of variable of integration $\tau = \sqrt{\sigma - x}$, 
$\mathcal{G}_2$ becomes
$$
\mathcal{G}_2(x) = 2 e^{ix^2} \int_0^\infty e^{i\tau^2 (\tau^2 + 2x)} \,d\tau \overset{def}{=} 2 e^{ix^2} g(x).
$$

\bigskip \noindent \underline{The case $x \rightarrow \infty$} Split then
$$
g(x) = \int_0^R + \int_R^\infty \dots\,d\tau \overset{def}{=} I + II
$$
Start with $I$:
$$
I = \int_0^R e^{i2x \tau^2}\,d\tau + \int_0^R \left[ e^{i\tau^2 (\tau^2 + 2x)} - e^{i2x \tau^2} \right] \,d\tau \overset{def}{=} I_1 +I_2
$$
The term $I_1$ can be written
$$
I_1 = \int_0^\infty e^{i2x \tau^2} \,d\tau - \int_R^\infty e^{i2x \tau^2} \,d\tau = \frac{1}{\sqrt{2x}} \int_0^\infty e^{i\sigma^2}\,d\sigma + O\left(\frac{1}{x R}\right),
$$
where the inequality $\int_R^\infty e^{i2x \tau^2} \,d\tau = O\left(\frac{1}{\alpha R}\right)$ follows by integration by parts.

As for $I_2$, estimate it brutally by
$$
\left| I_2 \right| \lesssim \int_0^R \tau^4 \,d\tau = O(R^5).
$$
Finally, an integration by parts gives
$$
II = \int_0^\infty e^{i\tau^2\left( \tau^2 + 2 \tau x \right)}\,d\tau \lesssim \frac{1}{R^2x}.
$$
Gathering the above gives
$$
g(x) = \sqrt{\frac{1}{2x}} \int_0^\infty e^{i\sigma^2}\,d\sigma + O(R^5) + O\left(\frac{1}{x R^2}\right);
$$
optimizing over $R$ gives finally
$$
g(x) = \sqrt{\frac{1}{2x}} \int_0^\infty e^{i\sigma^2}\,d\sigma + O\left( \frac{1}{x^{5/7}} \right)
$$
which is the desired result.

\bigskip \noindent \underline{The case $x \rightarrow - \infty$} Split then
$$
g(x) = \int_0^{\sqrt{-\frac{x}{2}}} + \int_{\sqrt{-\frac{x}{2}}}^\infty \dots\,d\tau \overset{def}{=} III + IV
$$
Start with $III$. Similarly as for $g$ in the case $x\rightarrow \infty$, split
$$
III = \int_0^R + \int_R^{\sqrt{-\frac{x}{2}}} \dots\,d\tau = III_1 + III_2,
$$
and estimate
$$
III_1 = \frac{1}{\sqrt{2x}} \int_0^\infty e^{i\sigma^2}\,d\sigma + O\left(R^5 + \frac{1}{|x| R}\right) \quad \mbox{and} \quad III_2 = O \left( \frac{1}{R^2x} \right).
$$
Optimizing over $R$ gives
$$
III = \frac{1}{\sqrt{2x}} \int_0^\infty e^{i\sigma^2}\,d\sigma + O\left( \frac{1}{|x|^{5/7}} \right).
$$
Turning now to $IV$, observe that the change of variable $\rho = -\frac{\tau^2}{x}$ gives
$$
IV = \sqrt{-x} \int_{1/2}^\infty e^{ix^2\rho(\rho-2)} \frac{d\rho}{2\sqrt{\rho}} = \frac{\sqrt{\pi}}{2} e^{i\frac{\pi}{4}} e^{-ix^2} \frac{1}{\sqrt{|x|}} + O\left( \frac{1}{|x|} \right).
$$
where the last equality follows by the stationary phase lemma. Putting together our estimates on $III$ and $IV$ gives the desired result.

\subsection{An intermediate result}

The following proposition essentially corresponds to the case of Proposition~\ref{fregate} where $\zeta$ is replaced by either $\sigma$, or $(\sigma-\epsilon)$ 
(in which case $\sigma_0 = \epsilon$).

\begin{prop} \label{manchot} Let $\chi$ be a smooth function.

(i) $\displaystyle \int_\epsilon^\infty e^{it\sigma^2} \chi(\sigma)\,d\sigma = \frac{\chi(0)}{\sqrt{t}} \mathcal{G}_1(\sqrt{t}\epsilon) + O\left(\frac{1}{t}\right)$

\bigskip

(ii) $\displaystyle \int_0^\infty e^{it\sigma} \chi(\sigma) \frac{d\sigma}{\sqrt{\sigma}} = \frac{C_0}{\sqrt{t}} \chi(0) + O\left(\frac{1}{t}\right)$ 
(recall that $C_0 = \int_{-\infty}^{+\infty} e^{i\sigma^2}\,d\sigma$).

\bigskip

(iii) $\displaystyle \int_\epsilon^\infty e^{it\sigma^2} \frac{1}{\sqrt{\sigma-\epsilon}} \chi(\sigma) \,d\sigma = \frac{\chi(0)}{t^{1/4}} \mathcal{G}_2(\sqrt{t}\epsilon) + \left\{ \begin{array}{ll}                                                                                                                                                  
O(t^{-3/4}) & \mbox{if $|\sqrt{t}\epsilon|<1$} \\ O \left( \sqrt{\frac{\epsilon}{t}} \right) & \mbox{if $|\sqrt{t}\epsilon|>1$} \end{array} \right.
$
\end{prop}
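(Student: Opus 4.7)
All three statements can be proved by first rescaling the variable of integration so that the oscillatory factor $e^{it\zeta(\sigma)}$ is renormalised to a unit-scale oscillation (typically $e^{is^2}$ after setting $s=\sqrt{t}\sigma$), then writing $\chi(\sigma)=\chi(0)+\sigma\tilde\chi(\sigma)$ with $\tilde\chi$ smooth and compactly supported, and showing that the contribution of $\sigma\tilde\chi(\sigma)$ is smaller than the main term by a factor of $\sqrt{t}$. For (i), I would substitute $s=\sqrt{t}\sigma$, turning the integral into $\tfrac{1}{\sqrt{t}}\int_{\sqrt{t}\epsilon}^\infty e^{is^2}\chi(s/\sqrt{t})\,ds$; plugging in $\chi(0)$ yields exactly $\tfrac{\chi(0)}{\sqrt{t}}\mathcal{G}_1(\sqrt{t}\epsilon)$, while the remainder, rewritten back in the original variable as $\int_\epsilon^\infty e^{it\sigma^2}\sigma\tilde\chi(\sigma)\,d\sigma$, is $O(1/t)$ after one integration by parts via $\sigma e^{it\sigma^2}=\tfrac{1}{2it}\partial_\sigma e^{it\sigma^2}$, the boundary contribution at $\sigma=\epsilon$ and the remaining absolutely convergent integral both being of that size.

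For (ii), the substitution $\sigma=\tau^2$ turns the integral into $2\int_0^\infty e^{it\tau^2}\chi(\tau^2)\,d\tau=\int_{-\infty}^\infty e^{it\tau^2}\chi(\tau^2)\,d\tau$ (using the parity of the integrand), which is a standard one-variable stationary-phase integral with a non-degenerate critical point at $\tau=0$. Noting that $\chi(\tau^2)|_{\tau=0}=\chi(0)$, the usual asymptotics produce the main term $\tfrac{\sqrt{\pi}\,e^{i\pi/4}}{\sqrt{t}}\chi(0)=\tfrac{C_0\chi(0)}{\sqrt{t}}$ with remainder $O(1/t)$.

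For (iii), I would again set $s=\sqrt{t}\sigma$; since $\sqrt{\sigma-\epsilon}=\sqrt{s-\sqrt{t}\epsilon}/t^{1/4}$, the integral becomes $\tfrac{1}{t^{1/4}}\int_{\sqrt{t}\epsilon}^\infty e^{is^2}\tfrac{\chi(s/\sqrt{t})}{\sqrt{s-\sqrt{t}\epsilon}}\,ds$, and the contribution of $\chi(0)$ is recognised \emph{exactly} as $\tfrac{\chi(0)\,\mathcal{G}_2(\sqrt{t}\epsilon)}{t^{1/4}}$. The remaining task is to bound, with $X:=\sqrt{t}\epsilon$, the remainder $\tfrac{1}{t^{3/4}}\int_X^\infty e^{is^2}\tfrac{s\tilde\chi(s/\sqrt{t})}{\sqrt{s-X}}\,ds$ by $O(1)$ when $X<1$ and by $O(\sqrt{X})$ when $X>1$. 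The strategy in both regimes is the same: split the inner integral at $s=X+L$ for a suitable length $L$, estimate the near portion crudely by $(X+L)\int_X^{X+L}(s-X)^{-1/2}ds\lesssim(X+L)\sqrt{L}$, and integrate by parts once on the far portion via $se^{is^2}=\tfrac{1}{2i}\partial_s e^{is^2}$; the boundary term at $s=X+L$ and the $\int_{X+L}^\infty(s-X)^{-3/2}ds$ produced by differentiation both yield $O(1/\sqrt{L})$. Choosing $L=1$ for $X<1$ gives $O(1)$, while choosing $L=1/X$ for $X>1$ gives $O(\sqrt{X})$, as required. The chief technical obstacle is precisely this balance: the integration by parts alone does not suffice uniformly in $\epsilon$ because of the singularity at $s=X$, so the crude estimate must be invoked on an $X$-dependent window whose length is then optimised against the gain from oscillation, and the two regimes $\sqrt{t}\epsilon<1$ and $\sqrt{t}\epsilon>1$ have to be treated separately to produce the correct uniform dependence on $\epsilon$.
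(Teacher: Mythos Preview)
Your approach is essentially the same as the paper's: rescale by $s=\sqrt{t}\,\sigma$, extract the main term, split the remainder into a near piece and a far piece at distance $L$ (the paper writes $R$) from the singularity, estimate the near piece directly and the far piece by one integration by parts, then optimize over $L$.

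Two small inaccuracies worth flagging. First, $\tilde\chi(\sigma)=(\chi(\sigma)-\chi(0))/\sigma$ is \emph{not} compactly supported unless $\chi(0)=0$; for large $\sigma$ it equals $-\chi(0)/\sigma$. This is precisely why the paper introduces an auxiliary cutoff $\beta$ equal to $1$ on $\operatorname{Supp}\chi$ and works with $Z(y)=\beta(y)[\chi(y)-\chi(0)]$, which \emph{is} compactly supported, handling the piece $\chi(0)[\beta-1]$ separately. Your argument still goes through because $\tilde\chi$ and $\tilde\chi'$ decay like $\langle\sigma\rangle^{-1}$ and $\langle\sigma\rangle^{-2}$, which is enough for all the integrals you write down, but the claim as stated is false. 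Second, in the far piece of (iii) the integration by parts $se^{is^2}=\tfrac{1}{2i}\partial_s e^{is^2}$ produces, besides the boundary term and the $(s-X)^{-3/2}$ term you mention, a term $\tfrac{1}{\sqrt{t}}\int_{X+L}^\infty e^{is^2}\tilde\chi'(s/\sqrt{t})(s-X)^{-1/2}\,ds$; it is $O(t^{-1/4})$ and hence harmless in both regimes, but it should not be dropped silently. With these two points cleaned up, your proof is correct and matches the paper's strategy.
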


\begin{proof} We only prove $(iii)$ since $(i)$ and $(ii)$ are simpler and can be proved by following a similar procedure as $(iii)$

\bigskip \noindent \underline{First reduction for $(iii)$}
The change of variable $\tau = \sqrt{t} \sigma$ gives
$$
\int_\epsilon^\infty e^{it\sigma^2} \frac{1}{\sqrt{\sigma-\epsilon}} \chi(\sigma) \,d\sigma = t^{-1/4} \int_{\sqrt{t}\epsilon}^\infty e^{i\tau^2} \frac{1}{\sqrt{\tau - 
\sqrt{t}\epsilon}} \chi\left( \frac{\tau}{\sqrt{t}} \right) \,d\tau
$$
Thus the proposition will be proved if we can show that
\begin{equation}
\label{phaeton}
\int_{\sqrt{t}\epsilon}^\infty e^{i\tau^2} \frac{1}{\sqrt{\tau - \sqrt{t}\epsilon}} \left[ \chi\left( \frac{\tau}{\sqrt{t}} \right) - \chi(0) \right] \,d\tau = \left\{ \begin{array}{ll}                                                                                                                                                  
O(t^{-1/2}) & \mbox{if $|\sqrt{t}\epsilon|<1$} \\ O \left( \sqrt{\epsilon} t^{-1/4} \right) & \mbox{if $|\sqrt{t}\epsilon|>1$} \end{array} \right.
\end{equation}
Define $\beta$ a smooth, compactly supported function, equal to $1$ on the support of $\chi$. The above term can be written
$$
(\ref{phaeton}) = \chi(0) \int_{\sqrt{t}\epsilon}^\infty e^{i\tau^2} \left[ \beta \left( \frac{\tau}{\sqrt{t}} \right) - 1 \right] \frac{d\tau}{\sqrt{\tau - \sqrt{t}\epsilon}} + 
\int_{\sqrt{t}\epsilon}^\infty e^{i\tau^2} \beta \left( \frac{\tau}{\sqrt{t}} \right) \left[ \chi\left( \frac{\tau}{\sqrt{t}} \right) - \chi(0) \right] 
\frac{d\tau}{\sqrt{\tau - \sqrt{t}\epsilon}}.
$$
Since the first summand is easier to deal with, we focus on the second. Setting $Z(y) \overset{def}{=} \beta(y) \left[ \chi(y) - \chi(0) \right]$, matters reduce thus to proving that
\begin{equation}
\label{guillemot}
\int_{\sqrt{t}\epsilon}^\infty e^{i\tau^2} Z \left( \frac{\tau}{\sqrt{t}} \right) \frac{d\tau}{\sqrt{\tau - \sqrt{t}\epsilon}} \lesssim \left\{ \begin{array}{ll}                                                                                                                                                  
t^{-1/2} & \mbox{if $|\sqrt{t}\epsilon|<1$} \\ \sqrt{\epsilon}t^{-1/4} & \mbox{if $|\sqrt{t}\epsilon|>1$} \end{array} \right.
\end{equation}
where $Z$ is a smooth function vanishing at $0$.

\bigskip \noindent \underline{Proof of~(\ref{guillemot})} Split
$$
(\ref{guillemot}) = \int_{\sqrt{t}\epsilon}^{\sqrt{t}\epsilon + R} + \int_{\sqrt{t}\epsilon + R}^\infty \dots\,d\tau \overset{def}{=} I + II.
$$
The term $I$ is estimated directly, giving
$$
I \leq \int_{\sqrt{t} \epsilon}^{\sqrt{t}\epsilon + R} \left| Z \left( \frac{\tau}{\sqrt{t}} \right) \right| \,\frac{d\tau}{\sqrt{\tau - \sqrt{t}\epsilon}} \lesssim \left\{ \begin{array}{ll} \epsilon \sqrt{R} &
\mbox{if $R<\sqrt{t} |\epsilon|$} \\ t^{-1/2} R^{3/2} & \mbox{if $R>\sqrt{t} |\epsilon|$} \end{array} \right.
$$
The term $II$ is submitted first to an integration by parts using the identity $\frac{1}{2\tau} \partial_\tau e^{i\tau^2} = e^{i\tau^2}$:
\begin{equation*}
\begin{split}
II & = \int_{\sqrt{t}\epsilon + R}^\infty \frac{1}{2\tau} \partial_\tau e^{i\tau^2} Z \left( \frac{\tau}{\sqrt{t}} \right) \frac{d\tau}{\sqrt{\tau - \sqrt{t}\epsilon}} \\
& = \frac{1}{\sqrt{t}} \int_{\sqrt{t}\epsilon + R}^\infty \frac{1}{2} \partial_\tau e^{i\tau^2} \widetilde{Z}
\left( \frac{\tau}{\sqrt{t}} \right) \frac{d\tau}{\sqrt{\tau - \sqrt{t}\epsilon}} \\
& = -\frac{1}{2\sqrt{t}\sqrt{R}} e^{i(\sqrt{t}\epsilon + R)^2} \widetilde{Z} \left( \frac{\sqrt{t}\epsilon + R}{\sqrt{t}} \right) - 
\frac{1}{2t} \int_{\sqrt{t}\epsilon + R}^\infty e^{i\tau^2} \widetilde{Z}'\left( \frac{\tau}{\sqrt{t}} \right) \frac{d\tau}{\sqrt{\tau - \sqrt{t}\epsilon}} \\
& \;\;\;\;\;\;\;\;\;\;\;\;\;\;\;\;\;\;\;\;\;\;\;\;\;\;\;\;\;\;\;\;\;\;\;\;\;\;\;\;\;\;\;\;\;\;\;\; +
\frac{1}{4\sqrt{t}} \int_{\sqrt{t}\epsilon + R}^\infty e^{i\tau^2} \widetilde{Z}\left( \frac{\tau}{\sqrt{t}} \right) \frac{d\tau}{(\tau - \sqrt{t}\epsilon)^{3/2}},
\end{split}
\end{equation*}
where we set $\widetilde{Z}(y) \overset{def}{=} \frac{Z(y)}{y}$. The term $II$ is then estimated directly
\begin{equation*}
\begin{split}
II & \lesssim \frac{1}{2\sqrt{t}\sqrt{R}} \left| \widetilde{Z} \left( \frac{\sqrt{t}\epsilon + R}{\sqrt{t}} \right) \right| + \frac{1}{t} \int_{\sqrt{t}\epsilon + R}^\infty 
\left| \widetilde{Z}'\left( \frac{\tau}{\sqrt{t}} \right) \right| \frac{d\tau}{\sqrt{\tau - \sqrt{t}\epsilon}} \\
&  \;\;\;\;\;\;\;\;\;\;\;\;\;\;\;\;\;\;\;\;\;\;\;\;\;\;\;\;\;\;\;\;\;\;\;\;\;\;\;\;\;\;\;\;\;\;\;\; + \frac{1}{\sqrt{t}} \int_{\sqrt{t}\epsilon + R}^\infty
\left| \widetilde{Z}\left( \frac{\tau}{\sqrt{t}} \right) \right| \frac{d\tau}{(\tau - \sqrt{t}\epsilon)^{3/2}} \\
& \lesssim t^{-1/2} R^{-1/2}.
\end{split}
\end{equation*}
Summing up, we have
$$
I + II \lesssim \left\{ \begin{array}{ll} \epsilon \sqrt{R} + t^{-1/2} R^{-1/2} &
\mbox{if $R<\sqrt{t} |\epsilon|$} \\ t^{-1/2} R^{3/2} + t^{-1/2} R^{-1/2} & \mbox{if $R>\sqrt{t} |\epsilon|$} \end{array} \right.
$$
Optimizing over $R$ (distinguishing of course between the cases $\sqrt{t}|\epsilon|>1$ and $\sqrt{t} |\epsilon|<1$) gives~(\ref{guillemot}).
\end{proof}

\subsection{Proof of Proposition~\ref{fregate}}

We only prove $(iv)$; indeed, the proofs of $(i)$ and $(ii)$ follow closely that of $(iv)$; and $(iii)$ simply requires an additional application of the stationary phase lemma.
The idea is simply to perform a change of variable which reduces matters to the Proposition~\ref{manchot}. We want to estimate
\begin{equation}
\label{petrel}
\int_0^\infty e^{it\zeta(\sigma)} \chi(\sigma)\,\frac{d\sigma}{\sqrt{\sigma}}
\end{equation}
where $\zeta''\geq c>0$ and $\zeta'(\sigma_0)=0$. Set now
$$
y = \Phi(\sigma) \overset{def}{=} \operatorname{sign}(\sigma-\sigma_0) \sqrt{\zeta(\sigma)}
$$
Notice that $\Phi$ is smooth, and that
$$
\Phi^{-1}(0)=\sigma_0 \quad,\quad \Phi'(\sigma_0) = \sqrt{\frac{\zeta''(\sigma_0)}{2}} \quad , \quad (\Phi^{-1})'(0) = \sqrt{\frac{2}{\zeta''(\sigma_0)}} 
$$
Furthermore,
$$
(\Phi^{-1})'(\Phi(0)) = \operatorname{sign}(\Phi(0)-\sigma_0) \frac{2\sqrt{\zeta} (\Phi(0))}{\zeta'(\Phi(0))} \overset{def}{=} C(\zeta)^2
$$
which implies that, $\sqrt{\Phi^{-1}(y)}$ can be written
$$
\sqrt{\Phi^{-1}(y)} = C(\zeta) \sqrt{y} \gamma(y) 
$$
for some smooth, positive function $\gamma$. Performing the change of variable $y = \Phi(\sigma)$ gives
$$
(\ref{petrel}) = \int_{\Phi(0)}^\infty e^{ity^2}  \chi \circ \Phi^{-1}(y) (\Phi^{-1})'(y) C(\zeta)^{-1} \frac{dy}{\sqrt{y} \sqrt{\gamma(y)}}.
$$
Applying Proposition~\ref{manchot} gives the desired result
$$
(\ref{petrel}) = \chi(\sigma_0) \sqrt{\frac{2}{\zeta''(\sigma_0)}} \frac{1}{C(\zeta)} \frac{1}{\gamma(0)} \frac{1}{t^{1/4}} \mathcal{G}_2(\sqrt{t}\epsilon) +
\left\{ \begin{array}{ll} O(t^{-3/4}) & \mbox{if $|\sqrt{t}\epsilon|<1$} \\ O \left( \sqrt{\frac{\epsilon}{t}} \right) & \mbox{if $|\sqrt{t}\epsilon|>1$} \end{array} \right.
$$

\end{document}